\newcommand{\email}[1]{#1}
\newtheorem{defi}{Definition}
\newtheorem{theorem}[defi]{Theorem}
\newtheorem{lemma}[defi]{Lemma}
\newtheorem{example}[defi]{Example}
\newtheorem{remark}[defi]{Remark}
\newcommand{\x}{\lambda}
\renewcommand{\leq}{\leqslant}
\pgfplotsset{compat=1.8}
 \DeclarePairedDelimiter{\norm}{\lVert}{\rVert}
 \definecolor{SPECorange}{rgb}{1.0,.5625,0}
 \definecolor{SPECblue}{rgb}{0,0,0.75}
 \definecolor{SPECred}{rgb}{0.75,0,0}
 \definecolor{SPECgreen}{rgb}{0,0.75,0}
 \definecolor{SPECblack}{rgb}{0.75,0.75,0.75}
 \definecolor{mainblue}{HTML}{225292}
 \definecolor{citecol}{rgb}{0.75,0,0}
 \definecolor{linkcol}{rgb}{0,0,0.75}
 \definecolor{SPECpink}{rgb}{1.0,0.1,0.6}
 \definecolor{SPECorangeg}{rgb}{0.609375,.609375,0.609375}
 \definecolor{SPECblueg}{rgb}{0.0525,0.0525,0.0525}
\newcommand{\CS}[1]{\mathcal{#1}}
\newcommandx{\tikzrotation}[6][1=black,2=,3=,4=]{
  \node at (#5,#6+.025) [align=center,scale=1.0,color=#1]{$\Rc$}; %
  \node at (#5,#6+.270) [align=center,scale=1.0,color=#1]{$\rc$}; %
  \node at (#5+.1,#6+.1) [align=center,scale=1.0,color=#1]{#2};%
  \node at (#5,#6+0.4) [align=center,scale=1.0,color=#1]{#3};%
  \node at (#5,#6-0.2) [align=center,scale=1.0,color=#1] {#4};
}
\newcommand{\uppertriangular}[4][black]{
  \draw[color=#1] (#2,#3) -- (#2+1.0,#3) -- (#2+1.0,#3+1.0) -- cycle;
  \node at (#2+.7,#3+.3) [align=center,scale=1.0,color=#1] {#4};%
}
\newcommand{\upperhessenberg}[4][black]{
  \draw[color=#1] (#2,#3) -- (#2+1.0,#3) -- (#2+1.0,#3+1.0) -- 
  (#2+.8,#3+1.0) -- (#2,#3+.2) -- cycle;
  \node at (#2+.6,#3+.35) [align=center,scale=1.0,color=#1] {#4};%
}
\newcommand{\shiftthroughrl}[4][black]{
  \path[color=#1,->,out=180,in=0,looseness=0.25] (#2-0.1,#4+0.1) edge
  (#3+0.05,#4+0.1); }
\newcommand{\shiftthroughlr}[4][black]{
  \path[color=#1,->,out=0,in=180,looseness=0.25] (#2+0.05,#4+0.1) edge
  (#3-0.1,#4+0.1); }
\newcommand{\transferbulgelr}[4][black]{ 
  \path[->,out=-45,in=-135,looseness=.25] (#2+0.1,#4+0.2) edge (#3-0.1,#4+0.2);
}
\newcommand{\turnoverrl}[5][black]{
  \path[color=#1,->,out=180,in=0] (#2-0.1,#3+0.1) edge (#4+0.05,#5+0.1);
}
\newcommand{\turnoverlr}[5][black]{
  \path[color=#1,->,out=0,in=180] (#2+0.05,#3+0.1) edge (#4-0.1,#5+0.1);
}
\newcommand{\drawbrace}[5][black]{
  \draw[color=#1,decorate,decoration=brace] (#3+0.05,#4)--(#2-0.05,#4);
  \node[color=#1] at (${0.5}*(#2+#3,#4+#4+0.6)$) [align=center] {#5};
}
\newcommand{\CC}{{\mathbb{C}}}
\newcommand{\absval}[1]{|{#1}|}
\title{Fast
  and backward stable computation of eigenvalues and eigenvectors of matrix polynomials\thanks{The research was partially supported by 
 the Research Council KU Leuven, project
 C14/16/056 (Invers-free Rational Krylov Methods: Theory and Applications)
, and by the GNCS/INdAM project ``Metodi numerici avanzati per equazioni e funzioni di matrici con struttura''.
}
}
\author{%
  Jared Aurentz\thanks{\email{Jared.Aurentz@icmat.es}, Instituto de Ciencias Matem\'aticas, Universidad Aut\'onoma de Madrid, Madrid, Spain},
  Thomas Mach\thanks{\email{Thomas.Mach@nu.edu.kz}, Department of Mathematics, Nazarbayev University, Kazakhstan},
  Leonardo Robol\thanks{\email{Leonardo.Robol@isti.cnr.it}, ISTI, Area della ricerca CNR, Pisa, Italy},
  Raf Vandebril\thanks{\email{Raf.Vandebril@cs.kuleuven.be}, Dept.\ of Computer Science, KU Leuven, Belgium},
  David S.~Watkins\thanks{\email{Watkins@math.wsu.edu}, Dept. of Mathematics, Washington State University, USA}
}
\date{\today}
\begin{document}

\maketitle


\begin{abstract}


  In the last decade matrix polynomials have been investigated
  with the primary focus on adequate linearizations and good
  scaling techniques for computing their eigenvalues and eigenvectors. In this
  article we propose a new method for computing a
  factored Schur form of the associated companion pencil.  The
  algorithm has a quadratic cost in the degree of the polynomial and a
  cubic one in the size of the coefficient matrices. Also the eigenvectors can
  be computed at the same cost.

  The algorithm is a variant of Francis's implicitly shifted QR algorithm applied on the
   companion pencil. A preprocessing unitary equivalence is executed on the
  matrix polynomial to simultaneously bring the leading matrix coefficient  and the
  constant matrix term to  triangular form before forming the companion pencil.  The
  resulting structure allows us to
  stably factor each matrix of the pencil as a product of  $k$ matrices of unitary-plus-rank-one form, 
  admitting cheap and numerically reliable storage. The problem is then solved as a
  product core chasing eigenvalue problem. A backward error analysis is included, implying
  normwise backward stability after a proper scaling. Computing the eigenvectors via
  reordering the Schur form is discussed as well.
 
Numerical experiments illustrate stability and efficiency of the proposed methods. \\[10pt]

\noindent\textbf{Keywords}: Matrix polynomial, product eigenvalue problem, core chasing algorithm,
eigenvalues, eigenvectors \\
\noindent\textbf{MSC class:} 65F15, 65L07

\end{abstract}




\section{Introduction}

  We are interested in computing  the eigenvalues and eigenvectors 
  of a degree $d$ square matrix polynomial:
  \begin{equation*} 
    P(\lambda) v = 0, \qquad\mbox{ where } \qquad P(\lambda) = \sum_{i = 0}^d P_i \lambda^i, \qquad 
    P_i \in \mathbb C^{k \times k} . 
  \end{equation*}
  The eigenpairs $(\lambda, v)$ are useful for a wide range of different applications,
  such as, for instance, the study of vibrations in structures
  \cite{q654,q624,lancaster2002lambda} and the numerical solution
  of differential equations \cite{nlevp},
  or the solution of certain matrix equations
  \cite{bini1996solution}.

  Even though recently a lot of interest has gone to studying other types of
  linearizations \cite{q759,DoLaPeVD16,q624}, the classical approach to solve this problem is still to construct 
  the so-called block companion pencil linearization 
  	\begin{equation} \label{eq:comp_pencil}
  	  S - \lambda T = \begin{bmatrix}
  	    \lambda I_k & && P_0   \\
  	    -I_k & \ddots &  & \vdots \\
  	    & \ddots & \lambda I_k & P_{d-2}  \\
  	    & & -I_k & \lambda P_d + P_{d-1}
  	  \end{bmatrix}\in\CC^{n\times n}[\lambda], 
  	\end{equation}
with $n=dk$ and $I_k$ the $k\times k$ identity, having eigenvalues identical to the eigenvalues of $P(\lambda)$ \cite{gohberg1982matrix,Ga74}.

 Even though the companion pencil is highly structured, the eigenvalues of $S - \lambda T$
 are usually computed by means of the QZ iteration \cite{q361}, which ignores the
 available structure. For example the MATLAB function {\tt polyeig} uses LAPACK's QZ
 implementation to compute the eigenvalues in this way. This approach has a cubic
 complexity in both the size of the matrices and the degree $\mathcal{O}(d^3k^3)$. The algorithm we propose is
 cubic in the size of the matrices, but quadratic in the degree $\mathcal{O}(d^2k^3)$. In all cases where
 high degree matrix polynomials are of interest, such as the
 approximation of the stationary vector for
 M/G/1 queues \cite{bini1996solution} or
 the interpolation of nonlinear eigenvalue problems \cite{effenberger2012chebyshev}, 
 this reduction in  computational cost is significant.

 If we consider a particular case of the above setting $k =1$, the problem 
 corresponds to approximating the roots of a scalar polynomial. 
In a recent paper Aurentz, Mach, Vandebril, and Watkins \cite{AuMaVaWa15}
 have shown that it is possible to exploit the unitary-plus-rank-one structure of the
 companion matrix to devise a backward stable $\mathcal O(d^2)$ algorithm, which is much cheaper than
 the required $\mathcal O(d^3)$  when running an iteration that does not
 exploit the structure. 
 On the other hand the case $d = 1$ is just a generalized eigenvalue problem. This can be solved 
 directly by the QZ iteration. The cost of this process equals $\mathcal
 O(k^3)$.  Based on these considerations, we believe that an asymptotic complexity
 $\mathcal O(d^2 k^3)$ is the best one can hope for, for solving this problem by means
 of a QR based approach.  In this work we will introduce an algorithm achieving this
 complexity.  Even though a fundamentally different approach from the one presented here
 might lead to a lesser complexity of, e.g., $\mathcal{O}(\max(d,k)dk^2)$, we think that a
 QZ based strategy cannot easily achieve a better result.

 There are only few other algorithms we are aware of that solve the matrix polynomial
 eigenvalue problem. Bini and Noferini \cite{BiNo13} present two versions of the Ehrlich-Aberth
 method; one is applied directly on the matrix polynomial leading to a complexity of
 $\mathcal{O}(d^2k^4)$ and another approach applied on the linearization leads to an
 $\mathcal{O}(d^3k^3)$ method.  Delvaux, Frederix, and Van Barel \cite{DeFreVB10} proposed
 a fast method to store the unitary plus low rank matrix based on the Givens-weight representation.
Cameron and Steckley \cite{Ca16} propose a technique based on Laguerre's
 iteration which is of the order $\mathcal{O}(d k^4 + d^2 k^3)$. In his PhD thesis \cite{Ro15b} Robol
 proposes a
 fast reduction of the block companion matrix to upper Hessenberg form, taking the
 quasiseparable structure into account; this could be used as a preprocessing step for
 other solvers. Eidelman, Gohberg, and Haimovici \cite[Theorems 29.1 and 29.4]{EiGoHa13} present in their monograph an algorithm for computing the
 eigenvalues of a unitary-plus-rank-$k$ matrix of total cost $\mathcal{O}(d^2 k^5)$.
 Fundamentally different techniques, via contour integration, are discussed by Van Barel
 in \cite{VB16}.

We will compute the eigenvalues of the original matrix polynomial by solving the
generalized block companion pencil\footnote{We will use both notations $(S,T)$ and
$S-\lambda T$ when referring to the pencil.}  $(S,T)$ in an efficient way.  The most important part
in the entire algorithm is the factorization of the block companion matrix $S$ and the
upper triangular matrix $T$ into the product of upper triangular or Hessenberg
unitary-plus-rank-one matrices. Relying on the results of Aurentz, et al.\ \cite{AuMaVaWa15} we can store each of these factors efficiently with only $\mathcal{O}(n)$ parameters.
Relying on the factorization we can rephrase the remaining eigenvalue problem into a
  product eigenvalue problem
\cite{benner2002perturbation,bojanczyk1992periodic,b333}. A product eigenvalue problem can
be solved by iterating on the formal product of the factors until they all converge to
upper triangular form. This procedure has been proven to be backward stable by Benner,
Mehrmann, and Xu \cite{benner2002perturbation} and the eigenvalues are formed by the product of 
the diagonal elements. The efficient storage and a core chasing implementation of the product eigenvalue
problem lead to an $\mathcal O(d^2 k^3)$ method for computing the Schur form. The
eigenvectors can be computed as well in $\mathcal O(d^2 k^3)$. Moreover, we will prove that the
algorithm is backward stable,  if suitable scaling is applied initially.

The paper is organized as follows. In Section~\ref{sec:factoring} we present three
different factorizations: we propose two ways of factoring the block companion matrix
$S$ and a way to factor the upper triangular matrix $T$. One factorization of $S$ uses
elementary Gaussian transformations, the other Frobenius companion matrices.
In Section~\ref{sec:storage} we
show how to store each of the unitary-plus-rank-one factors efficiently by
$\mathcal{O}(n)$ parameters. As the block companion matrix is not yet in Hessenberg form,
we need to preprocess the pencil $(S,T)$ to reduce it to Hessenberg-triangular form.  
We present an algorithm with complexity  $\mathcal{O}(d^2k^3)$ that achieves this in Section~\ref{sec:reduction}. 
The product
eigenvalue problem is solved in Section~\ref{sec:pep}. Deflation of infinite and zero
eigenvalues is handled in Section~\ref{sec:deflation}. Computing the eigenvectors fast is
discussed in Section~\ref{sec:eigenvectors}.
Finally we examine the backward
stability in Section~\ref{sec:backward} and provide numerical
experiments in Section~\ref{sec:numexp} validating both the stability and computational complexity of the
proposed methods.

\section{Factoring matrix polynomials} 
\label{sec:factoring}
The essential ingredient of this article is the factorization of the
companion pencil associated with a matrix polynomial. 
The factorization of the pencil coefficients into  structured matrices, providing cheap and
efficient storage allows the design of a fast structured product eigenvalue solver. 

\subsection{Matrix polynomials and pencils}
\label{sec:fmp}

Consider a degree $d$ matrix polynomial
\begin{equation}
\label{poly:nm}
\lambda^d P_d + \lambda^{d-1} P_{d-1} +
\cdots + \lambda P_1 + P_0 \in \CC^{k\times k}[\lambda],
\end{equation}
whose eigenvalues one would like to compute as eigenvalues of a pencil $S-\lambda
T\in\CC^{n\times n}[\lambda],$ with $n=dk$.
The classical linearization equals \eqref{eq:comp_pencil}. Following, however, the results of  Mackey, Mackey, Mehl, and Mehrmann
\cite{q573} and of Aurentz,
Mach, Vandebril, and Watkins \cite{AuMaVaWa15b} we know that all pencils
\begin{equation}
\label{eq:gpencil}
  S=
\left[
  \begin{array}{cccccc}
    &   & &  & -M_0 \\
     I_k &  &  & &   -M_1
\\
       & I_k &     & &  
\\
     &   &      \ddots & & \vdots\\
     &   &     & I_k & -M_{d-1}
  \end{array}
\right]
\;\;\mbox{ and }\;\;
 T=
\left[
  \begin{array}{cccccc}
     I_k &  &  & &   N_1
\\
       & I_k &     & &  
\\
     &   &      \ddots & & \vdots \\
     &   &     & I_k & N_{d-1} \\
&&&& N_{d}
  \end{array}
\right],
\end{equation}
with $I_k$ the $k\times k$ identity, $M_0=P_0$, $N_d=P_d$, and $M_i+N_i=P_i$, for all $2 \leq i \leq d-1$
have identical eigenvalues. 
In this article we discuss the more general setting \eqref{eq:gpencil}, since the
algorithm can deal with this. 
An optimal distribution of the matrix coefficients over $S$ and $T$ in terms of accuracy is, however, beyond
the scope of this manuscript, though a reasonable distribution should satisfy
$$\sqrt{\| [M_0,\ldots,M_{d-1}]\|^2+\|[N_1,\ldots,N_d]\|^2}
\approx\sqrt{\sum_{i=0}^d \| P_i \|^2 }.$$

In the next sections we will factor the matrices $S$ and $T$. Before being able to do so,
we need to preprocess the matrix polynomial. The leading coefficient $P_d$ needs to be
brought to upper triangular form and the constant term $P_0$ to upper or lower triangular
form.  This is shown pictorially in Figure~\ref{fig:nm} or Figure~\ref{fig:struc2}.
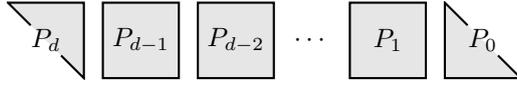
\begin{figure}[h!tb]
  \centering
  \begin{tikzpicture}[baseline={(current bounding box.center)},scale=.5,y=1cm]
    \fill[fill=gray!20!white] (-.5,0) -- (-.5,2) -- (-2.5,2) -- cycle; 
    \draw[thick] (-1.1,.6) -- (-.5,0) -- (-.5,2) -- (-2.5,2) -- (-1.9,1.4);

    \node at (-1.5,1) {$P_d$};

    \draw[thick,fill=gray!20!white] (0,0) rectangle (2,2);
    \node at (1,1) {$P_{d-1}$};
    \draw[thick,fill=gray!20!white] (2.5,0) rectangle (4.5,2);
    \node at (3.5,1) {$P_{d-2}$};
    \node at (5.5,1) {$\cdots$};
    \draw[thick,fill=gray!20!white] (6.5,0) rectangle (8.5,2);
    \node at (7.5,1) {$P_{1}$};

    \node[fill=white] at (10,1) {$\phantom{M}$};
    \fill[fill=gray!20!white] (9,0) -- (11,0) -- (9,2) -- cycle; 
    \draw[thick] (9.6,1.4)--(9,2)--(9,0) -- (11,0) -- (10.4,.6); 
    \node at (10,1) {$P_0$};
    \end{tikzpicture}

  \caption{Structure of the matrices: $P_d$ upper and $P_0$ lower triangular}
\label{fig:nm}
\end{figure}

\begin{figure}[h!tb]
  \centering
  \begin{tikzpicture}[baseline={(current bounding box.center)},scale=.5,y=1cm]
    \fill[fill=gray!20!white] (-.5,0) -- (-.5,2) -- (-2.5,2) -- cycle; 
    \draw[thick] (-1.1,.6) -- (-.5,0) -- (-.5,2) -- (-2.5,2) -- (-1.9,1.4);

    \node at (-1.5,1) {$P_d$};

    \draw[thick,fill=gray!20!white] (0,0) rectangle (2,2);
    \node at (1,1) {$P_{d-1}$};
    \draw[thick,fill=gray!20!white] (2.5,0) rectangle (4.5,2);
    \node at (3.5,1) {$P_{d-2}$};
    \node at (5.5,1) {$\cdots$};
    \draw[thick,fill=gray!20!white] (6.5,0) rectangle (8.5,2);
    \node at (7.5,1) {$P_{1}$};


    \fill[fill=gray!20!white] (11,0) -- (11,2) -- (9,2) -- cycle; 
    \draw[thick] (10.4,.6) -- (11,0) -- (11,2) -- (9,2) -- (9.6,1.4);

    \node at (10,1) {$P_0$};

    \end{tikzpicture}

\caption{Structure of the matrices: $P_d$ and $P_0$ upper triangular}
\label{fig:struc2}
\end{figure}

These structures can be achieved stably by computing the generalized Schur
decomposition\footnote{To compute the Schur decomposition of $(P_d,P_0^{-*})$ stably one solves
  the product eigenvalue problem $P_d P_0^*$ implicitly, without requiring inverses, nor
  multiplications between $P_0$ and $P_d$.} of
either  $(P_d,P_0^{-*})$ or $(P_d,P_0)$.  
Suppose $U$ and $V$ are the two unitary matrices such
that $U^*P_d V$ is upper triangular and $U^* P_0 V$ is triangular.

Performing an equivalence transformation on \eqref{poly:nm} with $U$ and $V$ 
provides the desired factorization.
In the remainder of this section we will therefore assume without loss of generality 
that all $P_i$'s are overwritten by $U^* P_i V$, such that the  leading coefficient $P_d=N_d$
is upper triangular and the  constant term $P_0=M_0$ is triangular.

\subsection{Frobenius factorization}
\label{sec:mmp}
The block companion matrix $S$ from \eqref{eq:gpencil} having its upper right block $M_0$
in lower triangular form (see Figure~\ref{fig:nm})
can be factored into the product of scalar companion matrices.

\begin{theorem}[Frobenius factorization]
\label{theo:fact}
The block companion matrix $S$ in \eqref{eq:gpencil}, 
with  blocks $M_i\in\CC^{k\times k}$ and $M_0$ lower triangular, 
can be factored as 
$S=S_1S_2 \cdots S_k$, where each $S_i\in\CC^{n\times n}$, $n=dk$, is
the Frobenius companion matrix linked to a scalar monic polynomial of degree $n$.
\end{theorem}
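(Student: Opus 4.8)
The plan is to exhibit the factors $S_i$ explicitly and verify the product telescopes to $S$. The key observation is that each block row of $S$ acts on the $k$ columns of its block entries independently, so one expects a factorization that "peels off" one scalar column of the coefficient data at a time. Concretely, write $M_j = [m_j^{(1)} \mid m_j^{(2)} \mid \cdots \mid m_j^{(k)}]$ for the columns of the $k\times k$ block $M_j$. I would define $S_i$ to be the $n\times n$ Frobenius companion matrix whose last column is assembled (up to sign) from the $i$-th columns $m_0^{(i)}, m_1^{(i)}, \dots, m_{d-1}^{(i)}$, interleaved in the correct positions, with the subdiagonal being the unit subdiagonal of an $n\times n$ companion matrix — \emph{except} that because $M_0$ is lower triangular, the entry $m_0^{(i)}$ has zeros in its first $i-1$ positions, which is exactly what makes the bookkeeping of the interleaving work.

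\textbf{The main steps.} First, I would recall the scalar case ($k=1$): there $S$ is already a single Frobenius companion matrix, so the statement is trivial and serves as a sanity check on the indexing. Second, I would set up notation carefully: the $n=dk$ coordinates are grouped into $d$ blocks of size $k$, and within each block we index by $1,\dots,k$. I would then write down the candidate $S_i$ as a companion matrix acting on a suitable ordering of coordinates — the natural choice is to order so that $S_i$ "owns" coordinate $i$ within each block, i.e. $S_i$ is the companion matrix in the variables $(1\text{st block, pos }i),(2\text{nd block, pos }i),\dots$ threaded together with the remaining coordinates so that multiplication by $S_i$ implements one elementary shift. Third — the heart of the argument — I would compute the product $S_1 S_2 \cdots S_k$ block-column by block-column: applying $S_k$ first handles the last column of each $M_j$, then $S_{k-1}$ the second-to-last, and so on, and one checks that (a) the unit subdiagonal blocks $I_k$ of $S$ are produced correctly from the composition of the $k$ scalar subdiagonals, and (b) the accumulated last block column reproduces exactly $-M_0,-M_1,\dots,-M_{d-1}$, with no cross terms contaminating the other columns. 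The lower-triangularity of $M_0$ is precisely the hypothesis that prevents such contamination: it guarantees that when $S_i$ writes its column, the earlier factors $S_1,\dots,S_{i-1}$ have not yet disturbed the relevant positions.

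\textbf{Where the difficulty lies.} The routine part is checking that each $S_i$ is genuinely a Frobenius companion matrix of a \emph{monic} degree-$n$ scalar polynomial; monicity is automatic from the unit subdiagonal, and the characteristic polynomial's coefficients are just read off the last column. The delicate part — and the one I would write out in full rather than by "routine calculation" — is the precise interleaving of coordinates and the verification that the product does not generate spurious off-diagonal blocks. I expect the cleanest presentation is inductive on $k$: assume $S' := S_1 \cdots S_{k-1}$ equals the block companion matrix built from the first $k-1$ columns of each $M_j$ (with a padded identity in the last column-slot), then show $S' S_k$ shifts in the $k$-th columns correctly, using the triangular structure of $M_0$ to control the single position where the induction could fail. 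The base case $k=1$ is the scalar companion matrix observed above. This reduces the whole theorem to one elementary but carefully-indexed multiplication of a companion matrix by a "partially filled" block companion matrix, which I would verify entrywise.
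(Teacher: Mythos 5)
Your construction coincides with the paper's: each $S_i$ is the Frobenius companion matrix whose spike is the $i$-th column of the stacked coefficient matrix $[M_0^T,\dots,M_{d-1}^T]^T$ shifted up $i-1$ rows, and the lower triangularity of $M_0$ is used exactly as you say, to guarantee that the entries pushed past the top are zero. The only (inessential) difference is that the paper organizes the verification as a left-to-right recursion $S=S_1\widetilde S$ with $\widetilde S$ a narrower block companion matrix, whereas you induct on $k$ by appending $S_k$ on the right; the underlying telescoping computation is the same.
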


\begin{proof}
  The proof is constructive and proceeds recursively. We show how to factor a companion
  matrix $S$ 
into the product of two matrices $S=S_1 \widetilde{S}$, with
  $S_1$ a companion matrix of a scalar polynomial of degree $n=dk$ and $\widetilde{S}$ 
  the matrix on which we will apply the recursion.

  Consider $J$ the $n\times n$ nilpotent downshift matrix, i.e., it has ones on
  the subdiagonal and zeros elsewhere. Name
  $M=[M_0^T,M_1^T,\ldots,M_{d-1}^T]^T$ the skinny matrix
  of size $n\times k$ having all $M_i$ stacked vertically.
The elements of the blocks $M_i$ ($0\leq i\leq d-1$) are indexed according to their
position in $M$, thus $M_0$ and $M_{d-1}$ look like
\begin{eqnarray*}
\left[
  \begin{array}{cccc}
    m_{11}  \\
    m_{21} & m_{22} & \\
    \vdots & & \ddots \\
    m_{k1} & m_{k2} & \cdots & m_{kk} \\
  \end{array}
\right]
\;\;\mbox{and}\;\;
\left[
  \begin{array}{cccc}
    m_{(d-1)k+1,1} & m_{(d-1)k+1,2} & \cdots & m_{(d-1)k+1,k}  \\
    m_{(d-1)k+2,1} & m_{(d-1)k+2,2} &        & m_{(d-1)k+2,k} \\
    \vdots         &                & \ddots  & \vdots \\
    m_{dk,1} & m_{dk,2} & \cdots & m_{dk,k} \\
  \end{array}
\right].
\end{eqnarray*}

 The block companion matrix $S=J^k -    
M\left[\;0,\; \ldots,\;0,\;I_k \right]
$ admits a factorization $S_1 \widetilde{S}$, where 
$    S_1   =   
      J  - 
    \left[
        m_{11},
        m_{21},
        \ldots,
        m_{dk,1}
      \right]^T\, e_{n}^T$,
 and 
    $\widetilde{S}   =   
      J^{k-1} - \widetilde{M} \left[\;0,\; \ldots,\;0,\;I_{k-1} \right]\;,
$
with 
$e_{n}$  the $n$-th
 standard basis vector and
$\widetilde{M}$  of size $n\times (k-1)$ consisting of the last $k-1$ columns of
$M$ moved up one row. The top $k$ and bottom $k$ rows of $\widetilde{M}$ are thus of the forms 
\begin{eqnarray*}
\left[
  \begin{array}{cccc}
    m_{22}  \\
    m_{32} & m_{33} & \\
    \vdots & & \ddots \\
    m_{k,2} & m_{k,3} & \cdots & m_{k,k} \\
    m_{k+1,2} & m_{k+1,3} & \cdots & m_{k+1,k} \\
  \end{array}
\right]
\;\;\mbox{and}\;\;
\left[
  \begin{array}{cccc}
    m_{(d-1)k+2,2} & m_{(d-1)k+2,3} & \cdots & m_{(d-1)k+2,k}  \\
    m_{(d-1)k+3,2} & m_{(d-1)k+3,3} &        & m_{(d-1)k+3,k} \\
    \vdots         &                & \ddots  & \vdots \\
    m_{dk,2} & m_{dk,3} & \cdots & m_{dk,k} \\
    0 & 0 & \cdots & 0
  \end{array}
\right].
\end{eqnarray*}

Continuing to factor $\widetilde{S}$ in a similar way results in the desired factorization $S=S_1\cdots
S_k$, where each $S_i$ is the Frobenius companion linearization of a degree $n=dk$ scalar polynomial.
\end{proof}
\begin{remark}
Computing the Frobenius factorization is
  cheap and stable
as no arithmetic operations are involved.
\end{remark}

\begin{example}
Consider a degree $3$ matrix polynomial $\x^3 I_3 + \x^2 M_2 + \x M_1 + M_0$ with $3\times 3$ blocks
\begin{equation*}
M_0=-\left[
\begin{array}{ccc}
1 &  &  \\
2 & 10 &  \\
3 & 11 & 18 \\
\end{array}
\right],\;\;
M_1=-\left[
\begin{array}{ccc}
4 & 12 & 19 \\
5 & 13 & 20 \\
6 & 14 & 21 \\
\end{array}
\right],
\;\;\mbox{and}\;\;
M_2=-\left[
\begin{array}{ccc}
7 & 15 & 22 \\
8 & 16 & 23 \\
9 & 17 & 24 \\
\end{array}
\right].
\end{equation*}
  The associated companion matrix $S$ equals $S_1S_2S_3$, with
  \begin{equation*}
\small
    S_1 = 
\left[
  \begin{array}{cccccc}
      &  &  & &&1 \\
    1 &  &  & &&2 \\
      & \ddots&&& &\vdots  \\
      && 1 &&&   7 \\
      &&& 1 &&   8 \\
      & & && 1   & 9
  \end{array}
\right],\;
S_2 = 
\left[
  \begin{array}{ccccccc}
      &        &  && &  10 \\
    1 &        &  && &  11 \\
      & \ddots &  && &  \vdots  \\
      &        &  1 && & 16  \\
      &        &  & 1& &  17 \\
      &        &  & &1&  0
  \end{array}
\right],\;
    S_3 = 
\left[
  \begin{array}{cccccc}
      &        &  & & & 18 \\
    1 &        &  & & & 19 \\
      & \ddots &  & & & \vdots  \\
      &        & 1& & & 24 \\
      &        &  &1& & 0 \\
      &        &  & &1& 0 
  \end{array}
\right].
  \end{equation*}

\end{example}

In Section~\ref{sec:abc} we will provide an alternative factorization of the block
companion matrix $S$, which, as we will show in Section~\ref{sec:backward} leads to a 
 slightly better bound on the backward error.

\subsection{Gaussian factorization}

Both $S$ and $T$ are of unitary-plus-low-rank form, which is essential for developing a
fast algorithm. For proving backward stability in Section~\ref{sec:backward}, we need,
however, more; we require that all the factors in the factorizations of $S$ and $T$ have the low rank part
concentrated in a single column, the \emph{spike}. For instance a companion is also a
unitary-plus-spike matrix.  For factoring $T$ from \eqref{eq:gpencil}, we will use
Gaussian transformations providing us a factorization into \emph{identity-plus-spike
  matrices}, that is, matrices that can be written as the sum of the identity and a
rank-one matrix having a
single nonzero column.

\begin{theorem}[Gaussian factorization]
\label{theo:gc}  

The matrix $T$ from \eqref{eq:gpencil}, with matrix blocks $N_i\in\mathbb{C}^{k\times k}$
and 
$N_d$ upper triangular %
can be factored
as $T=T_1T_2 \cdots T_k$, where each $T_i\in\CC^{n\times n}$, with $n=dk$, is
upper triangular and of identity-plus-spike form.

\end{theorem}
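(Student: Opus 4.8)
The plan is to mirror the constructive, recursive argument used for the Frobenius factorization in Theorem~\ref{theo:fact}, but now peeling off \emph{columns} of the trailing block column of $T$ rather than rows, and using elementary Gaussian (unit upper triangular, identity-plus-spike) factors instead of companion matrices. Write $T = I_n - N[\,0,\ldots,0,I_k\,] + (\text{a correction in the bottom-right }k\times k\text{ block})$; more precisely, collect the trailing block column of $T$ into a skinny matrix whose blocks are $N_1,\ldots,N_{d-1},N_d$, where only $N_d$ is square-triangular and the others contribute the off-diagonal spike entries, and where $N_d$ upper triangular is exactly what makes the bottom-right corner amenable to being split one column at a time.

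First I would set up notation parallel to the proof of Theorem~\ref{theo:fact}: index the entries of the stacked block column of $T$ by their global row position $1,\ldots,n$, and observe that because $N_d$ is upper triangular the last column of $T$ has its lowest nonzero entry on the diagonal (entry $(n,n)$), the second-to-last column has its lowest nonzero entry in row $n-1$ after the first factor is removed, and so on. Then I would exhibit $T = T_1 \widetilde T$ explicitly, with $T_1$ the identity-plus-spike matrix whose single nonzero off-identity column is the last column of $T$ (suitably normalized so that $T_1$ is unit upper triangular — this works precisely because $N_d$ is upper triangular, so that column has a $1$ on the diagonal in position $n$), and $\widetilde T = I_n \cdot(\text{shift})$-type factor carrying the remaining $k-1$ columns moved up/left by one, exactly analogous to how $\widetilde M$ was formed from $M$ in Theorem~\ref{theo:fact}. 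One checks $T_1\widetilde T = T$ by a direct block multiplication: $T_1$ acts as the identity except in its last column, and multiplying it into $\widetilde T$ reconstitutes the trailing block column of $T$ while leaving the identity part intact; the upper-triangularity of each factor is immediate from its definition. Iterating $k-1$ more times on $\widetilde T$ produces $T = T_1 T_2\cdots T_k$, each $T_i$ upper triangular and identity-plus-spike.

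The main obstacle — really the only delicate point — is verifying that the factorization is honestly \emph{identity-plus-spike with the spike in a single column} at every stage, and in particular that the diagonal entries remain $1$ so the $T_i$ are genuinely unit upper triangular and the product telescopes correctly. This is where the hypothesis that $N_d$ is upper triangular is essential: if $N_d$ had nonzero entries below the diagonal, the "last column" being peeled off would have its nonzero pattern extending below row $n$'s natural pivot and the split would not produce a clean identity-plus-spike factor (one would need an extra triangular correction, or the factor would fail to be upper triangular). I would therefore make explicit, in the spirit of the two displayed matrices showing the top and bottom rows of $\widetilde M$ in the previous proof, how the triangular corner of $N_d$ shrinks by one row and one column at each recursive step, so that the invariant "the current trailing-block matrix has an upper-triangular bottom-right corner" is preserved. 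Everything else — that no arithmetic is needed, that the construction is stable, and that each $T_i$ is the linearization-type factor we want — follows verbatim as in Theorem~\ref{theo:fact} and its accompanying remark. I would close with an explicit small example ($d=3$, $k=3$) paralleling the Frobenius example, to make the column-peeling pattern transparent.
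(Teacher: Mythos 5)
Your overall strategy is the right one and is essentially the paper's: peel one column of the trailing block column off at a time into an identity-plus-spike factor, and use the upper triangularity of $N_d$ to make the recursion close. (The paper peels from the other end: it writes $T=\widetilde T\,T_k$ with $T_k$ carrying the \emph{first} column of the block column, placed as a spike in column $(d-1)k+1$, whereas you write $T=T_1\widetilde T$ with $T_1$ carrying the last column. Both orders work and yield the same factors; compare Example~\ref{ex:gf}, where $T_1$ indeed holds the last column.)

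Two of your details are wrong, and one would break the construction if carried out literally. First, the last column of $T$ does \emph{not} have a $1$ in position $n$: its $(n,n)$ entry is the $(k,k)$ entry of $N_d$, which is arbitrary (it equals $24$ in Example~\ref{ex:gf}). ``Identity-plus-spike'' in this paper means $I_n+v e_j^T$ with no constraint on $1+v_j$, so the factors are not unit upper triangular and no normalization is needed or possible: if you rescale $T_1$ to have unit diagonal, then $T_1\widetilde T$ no longer reconstructs $T$ without a compensating diagonal scaling that destroys the identity-plus-spike form of the remaining factors. Second, the residual factor $\widetilde T$ must \emph{not} have its columns moved up a row; that shift is an artifact of the Frobenius factorization, where the downshift $J$ is present. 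Here $\widetilde T$ simply keeps the remaining $k-1$ columns of the block column in place, and $T_1\widetilde T=T$ holds because the cross term $(\widetilde N e_k)\,e_n^T(\widetilde N e_j)\,e_{(d-1)k+j}^T$ vanishes for $j<k$: the last entry of $\widetilde N e_j$ is zero precisely because $N_d$ (hence $\widetilde N_d=N_d-I_k$) is upper triangular. That vanishing of the cross terms, together with each spike column having zeros below its diagonal position, is the actual role of the hypothesis --- not a unit diagonal. With these two corrections your argument coincides with the paper's proof.
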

\begin{proof}
The proof is again of recursive and constructive nature and involves elementary matrix
operations. We will factor 
$T$ 
as $\tilde{T} T_k$,
where $T_k$ is of the desired upper triangular and identity-plus-spike form, 
the recursion will be applied on the matrix $\tilde{T}$.
Write 
\begin{equation*}
T  =I_{n}+ 
[
    N_1^T,
    N_2^T, 
    \ldots, 
    N_{d-1}^T, 
    \tilde{N}_d^T
]^T
[0, \ldots, 0, I_k]
= I_n+\tilde{N}\;[0, \ldots, 0, I_k],
\end{equation*}
with $\tilde{N}\in\CC^{n\times k}$ and $\tilde{N}_d=N_d-I_k$. 
Again we index the individual blocks $N_i$ ($1\leq i\leq d-1$) and $\tilde{N}_d$  according to $N$.  It is
easily verified that $T=\widetilde{T}T_k$, with
\begin{equation*}
T_k = I_n+
\left[
    n_{11}, 
    n_{21}, 
    \ldots, \\
    \tilde{n}_{(d-1)k+1,1}, \\
    0,\ldots,0
\right]
 e_{(d-1)k+1}^T
= (\tilde{N} e_1)  e_{(d-1)k+1}^T,
\end{equation*}
 and 
\begin{equation*}
\widetilde{T}=I_n+
\left[
  \begin{array}{cccc}
    n_{12} & n_{13} & \cdots & n_{1k} \\
    n_{22} & n_{23} & \cdots & n_{2k} \\
    \vdots & \vdots &  & \vdots \\
    n_{(d-1)k,2} & n_{(d-1)k,3} & \cdots & n_{(d-1)k,k} \\
    \tilde{n}_{(d-1)k+1,2} & \tilde{n}_{(d-1)k+1,3} & \cdots & \tilde{n}_{(d-1)k+1,k} \\
    \tilde{n}_{(d-1)k+2,2} & \tilde{n}_{(d-1)k+2,3} & \cdots & \tilde{n}_{(d-1)k+2,k} \\
             & \tilde{n}_{(d-1)k+3,3} & \cdots & \tilde{n}_{(d-1)k+3,k} \\
             &     & \ddots & \vdots \\
             &     &        & \tilde{n}_{dk,k} \\
  \end{array}
\right]
[0_{(k-1) \times ((d-1)k+1)} , I_{k-1}].
\end{equation*}

Continuing to factor $\widetilde{T}$ proves the theorem.

 \end{proof}

\begin{remark}
  There are two important consequences of having $P_d=N_d$ in upper triangular form. First,
  the proof of Theorem~\ref{theo:gc} reveals that we can compute the factorization in an
  exact way, as no computations are involved. Second, all factors in the
  factorization of $T$ are already in upper triangular form, which will come in handy in
  Section~\ref{sec:reduction} when
  transforming the pencil $(S,T)$ to Hessenberg-triangular form.
\end{remark}

\begin{example}
\label{ex:gf}
  Consider a $9\times 9$ upper triangular matrix $T$ as in \eqref{eq:gpencil}, with
  $3\times 3$ blocks
\begin{equation*}
  N_2=
\left[
\begin{array}{ccc}
1 & 8 & 16  \\
2 & 9 & 17 \\
3 & 10 & 18
\end{array}
\right]
, \quad N_3=
\left[
\begin{array}{ccc}
4 & 11 & 19 \\
5 & 12 & 20 \\
6 & 13 & 21 \\
\end{array}
\right]
, \;\mbox{and}\;\quad N_4 =
\left[
\begin{array}{ccc}
7 & 14 & 22 \\
  & 15 & 23 \\
  &    & 24
\end{array}
\right]
. 
\end{equation*}
Then we have that $T=T_1 T_2 T_3$ with
  \begin{equation*}
\small
    T_1 = 
\left[
  \begin{array}{cccccc}
    1  &        &      && & 16 \\
       & \ddots &      && & \vdots \\
       &        & 1&& & 21  \\
       &        & & 1 & & 22 \\
       &   &      & &1& 23 \\
       &   &      & & & 24
  \end{array}
\right],\; 
T_2 =
\left[
  \begin{array}{cccccc}
    1  &        &      && 8 &  \\
       & \ddots &      && \vdots &  \\
       &        & 1&& 13&   \\
       &        & & 1 & 14&  \\
       &   &      & &15&  \\
       &   &      & & & 1
  \end{array}
\right],\; 
    T_3 = 
\left[
  \begin{array}{cccccc}
    1  &   &      & 1 \\
       & \ddots &      & \vdots \\
       &   &1& 6 \\
       &   &      & 7& \\
       &   &      &  & 1 \\
       & & & & & 1
  \end{array}
\right].
  \end{equation*}

\end{example}



\subsection{Gaussian factorization of the block companion matrix}
\label{sec:abc}

In this section we provide a second option for factoring the block companion matrix $S$ (see \eqref{eq:gpencil}).
Whereas Theorem~\ref{theo:fact} factors $S$ into regular
Frobenius companion matrices, the  factorization proposed here relies only on
Theorem~\ref{theo:gc} for factoring upper triangular matrices. To be able to do so
the block $M_0$ needs to be in upper triangular form (see Figure~\ref{fig:struc2}),
in contrast to the lower triangular form required by Theorem~\ref{theo:fact}.

Instead of factoring $S$ directly we compute its QR factorization first. Let ($n=dk$)
\begin{eqnarray}
\label{matrixQ}
  \CS{Q} = \left[\begin{array}{ccccc}
      0 & 0  & \cdots        & 0  &  1     \\
      1 &   &        &   &  0       \\
      & 1 &        &   &  0        \\
      &   & \ddots &  & \vdots   \\
      & & & 1 & 0 
    \end{array}\right]\in\CC^{n \times n},
\end{eqnarray}
leading to 
\begin{eqnarray*}
  S=\CS{Q}^k R = \left[\begin{array}{ccccc}
      0 & 0  & \cdots        & 0  &  I_k     \\
      I_k &   &        &   &  0       \\
      & I_k &        &   &  0        \\
      &   & \ddots &  & \vdots   \\
      & & & I_k & 0 
    \end{array}\right]
\left[\begin{array}{ccccc}
      I_k &   &         &   &  -M_1     \\
       & I_k  &        &   &    -M_2  \\
      &  & \ddots       &   &  \vdots      \\
      &   &  & I_k & -M_{d-1}   \\
      & & &  & -M_0 
    \end{array}\right].
\end{eqnarray*} 
The upper triangular matrix $R$ can be factored by Theorem~\ref{theo:gc} as $R=R_1\cdots
R_k$, providing a factorization of $S$ of the form $S = \mathcal Q^k R_1 \cdots R_k$.

\section{Structured storage}
\label{sec:storage}

To develop an efficient product eigenvalue algorithm we will exploit the structure 
 of the factors.  In addition to being sparse, 
 all factors are also of unitary-plus-rank-one form, and we can use
an $\mathcal{O}(n)$ storage scheme, where $n=dk$. We recall how to efficiently store
these matrices, since the upper triangular factors differ slightly from what is
presented by Aurentz et al.\ \cite{AuMaVaWa15}.  

To factor these matrices efficiently we use \textit{core transformations} $Q_i$, which are
identity matrices except for a unitary two by two block $Q_i(i:i+1,i:i+1)$. Note that the
subscript $i$ in core transformations will always refer to the \textit{active part} $(i:i+1,i:i+1)$.
 For instance
rotations or reflectors operating on two consecutive rows are core transformations.

Let us first focus on the factorization $S=S_1\cdots S_k$ from Section~\ref{sec:mmp}.
All matrices $S_i$ with $1\leq i\leq k$ are of
unitary-plus-rank-one and of Hessenberg form and we replace them by their QR factorization 
$S_i=\CS{Q} R_i$. This factors the companion matrices into a unitary $\CS{Q}$ and upper
triangular identity-plus-spike matrices $R_i$. Indeed, the matrix $\CS{Q}$ is, initially, identical for all $S_i$, and looks like \eqref{matrixQ},
being the product of $n-1$ core transformations:
$\CS{Q}=Q_{1}\dotsm Q_{n-1}$, with all $Q_{i}(i:i+1,i:i+1)=\left[\begin{smallmatrix} 0 &
    1\\ 1 & 0\end{smallmatrix}\right]$ counteridentities. From now on, we will use
caligraphic letters to denote a sequence of core transformations such as $\CS{Q}$.
Because the core transformations in $\mathcal Q$ are ordered such that the first one acts
on rows $1$ and $2$, the second one on rows $2$ and $3$, the third one on rows $3$ and $4$,
and so forth, we will refer to it as a \emph{descending sequence} of core transformations.
When presenting core transformations pictorially as in Section~\ref{sec:reduction}
we see that $\mathcal Q$ clearly describes a descending pattern of core transformations.

As a result we have a factorization for $S$ of the form
\begin{equation}
  \begin{array}{rcll}
  S & = & S_1 S_2 \cdots S_k \; = \; \CS{Q} R_1\, \CS{Q} R_2\, \cdots\, \CS{Q} R_k &\quad 
(\mbox{Frobenius factorization}).
\end{array}
\label{f:1}
\end{equation}
Alternatively, considering the factorization from Section~\ref{sec:abc}, we would get 
\begin{equation}
  \begin{array}{rcll}
  S & = & \CS{Q}^k R_1 \cdots R_k &\quad 
(\mbox{Gaussian factorization}).
\end{array}
\label{f:2}
\end{equation}
It is important to notice that even though \eqref{f:1} and \eqref{f:2}
use the same symbols $R_i$, they are different. 
However, as both factorizations are never
used simultaneously, we will reuse these symbols throughout the text.

Since $\CS{Q}$ can be factored into $n-1$ core transformations
$\CS Q^k$ from \eqref{matrixQ} admits a factorization into $k$
\emph{descending sequences} of core transformations too. It remains to
efficiently store the upper triangular identity-plus-spike matrices
$R_i$:
\begin{equation}
\label{eq:ri}
R_i=\left[\begin{array}{cccccccc}
      1 &   &        &   & \times      \\
      & 1 &        &   &  \times       \\
      &   & \ddots &   & \vdots   \\
      &   &        & 1 & \times   \\
      &   &        &   & \times   \\
      &   &        &   &       & 1 \\
      & & & & && \ddots\\
      & & & & && &1 
    \end{array}\right].
\end{equation}
In the Frobenius case \eqref{f:1} the spike is always located in the last column and in
Gaussian case \eqref{f:2}
the spike is found in column $n+1-i$.

For simplicity we drop the subscript $i$ and we deal with all cases at once: 
Let $R$ have the form \eqref{eq:ri},  
with  the spike in column $\ell$.   Then $R=Y+(x - e_\ell )y^T$ with 
$Y=I_n$ unitary, $x\in\mathbb{C}^{n}$ the vector containing the spike,  and $y= e_{\ell} \in \mathbb{C}^{n}$. 
We begin by
embedding $R$ in a larger matrix  $\underline{R}$
having one extra zero row and an extra column with only a single element different from zero:
$$ \underline{R}=\left[\begin{array}{cccccccc|c}
      1 &   &        &   & \times &&&& 0    \\
      & 1 &        &   &  \times &&&& 0     \\
      &   & \ddots &   & \vdots &&&& \vdots \\
      &   &        & 1 & \times &&&& 0  \\
      &   &        &   & \times &&&& 1  \\
      &   &        &   &       & 1 &&& 0 \\
      & & & & & & \ddots && \vdots\\
      & & & & & & &1& 0 \\ \hline
      0& 0 & \cdots & 0 &0 & 0 &\cdots &0 & 0
    \end{array}\right].
$$
Let $P$ denote the $(n+1)\times n$ 
matrix obtained by adding a zero row to the bottom of the 
$n \times n$ identity matrix.  Then $R = P^{T}\underline{R}P$.   In fact our object of interest is $R$, but for 
the purposes of efficient storage we need the extra room provided by the larger matrix
$\underline{R}$. 
The $1$ in the last column ensures that the matrix $\underline{R}$ remains of
unitary-plus-spike form $\underline{R}=\underline{Y} + \underline{x}\underline{y}^T$, 
where 
$\underline{y}=e_\ell\in\mathbb{C}^{n+1}$, 
and $\underline{x}\in\mathbb{C}^{n+1}$ is just 
$x$ with a $-1$ adjoined at the bottom. Thus $P^T \underline{y}= y$, $P^T \underline{x}=x$,
\begin{equation}
\underline{Y}=\left[\begin{array}{cccccccc|c}
      1 &   &        &   &  &&&&    \\
      & 1 &        &   &  &&&&      \\
      &   & \ddots &   &  &&&&  \\
      &   &        & 1 &  &&&&   \\
      &   &        &   & 0 &&&& 1  \\
      &   &        &   &       & 1 &&&  \\
      & & & & & & \ddots && \\
      & & & & & & &1&  \\ \hline
      &  &  &  & 1 &  & & & 0
    \end{array}\right] \;\;\mbox{ and }\;\;
\underline{x}=
\left[
  \begin{array}{c}
    \times \\ \times \\ \times \\
    \vdots \\
    \times \\
    0 \\
    \vdots\\
    0 \\ \hline
     -1\\
  \end{array}
\right].
\label{eq:x:nw}
\end{equation}

Let  $C_{1},\dotsc,C_{n}$  be core transformations such that 
$C_{1} \dotsm C_{n} \underline{x} = \alpha e_{1}$, with $\absval{\alpha} = \norm{\underline{x}}_{2}$. 
Because of the nature of $\underline{x}$,  each of the core transformations $C_{\ell+1},\dotsc,C_{n}$ 
has the simple active part $F = 
\left[\begin{smallmatrix} 0 &
    1\\ 1 & 0\end{smallmatrix}\right]
$.  If we 
use the symbol $F_{j}$ to denote a core transformation with active part $F$, then $C_{j} = F_{j}$ for
$j=\ell+1$, \ldots, $n$.   
Let $\CS{C} = C_{1} \dotsm C_{n}$. 
Then we can write 
$$\underline{R} = \CS{C}^*(\CS{C}\underline{Y} + e_{1}\underline{y}^{T}),$$
where we have absorbed the factor\footnote{In the remainder of the text we will always
  assume $\|\underline{x}\|=1$ and as a consequence that $\alpha$ is absorbed into $\underline{y}$.}
$\alpha$ into $y$.  Let $\CS{B} = \CS{C}\underline{Y} = C_{1} \dotsm C_{n} \underline{Y}$.
It is easy to check that $C_{\ell+1} \dotsm C_{n} \underline{Y} = F_{\ell+1} \dotsm F_{n} \underline{Y} = F_{\ell} F_{\ell+1} \dotsm F_{n}$.
Thus $\CS{B} = C_{1} \dotsm C_{\ell} F_{\ell} \dotsm F_{n}$, so we can write $\CS{B} = B_{1} \dotsm B_{n}$, where 
$B_{\ell} = C_{\ell}F_{\ell}$,  and $B_{j} = C_{j}$ for $j \neq \ell$.  
As a result we get
\begin{eqnarray}
R = P^{T}C_{n}^*\dotsm C_{1}^*( B_{1}\dotsm B_{n} + e_{1}\underline{y}^{T})P = P^T \CS{C}^* (\CS{B}
+ e_1 \underline{y}^T ) P.
\label{eq:factorize:upper:triangular}
\end{eqnarray} 
The symbols $P$ and $P^{T}$ are there just to make the dimensions come out right.  They add nothing to 
the storage or computational cost, and we often forget that they are there.  

\begin{remark}
It was shown in \cite{AuMaVaWa15}
 that by adding an extra row and column to the
  matrix $R$, preserving the unitary-plus-spike and the upper triangular structure,
  that all the information about the rank-one part is encoded in the unitary part.   Thus we will
  not need to store the rank-one-part explicitly. We will consider this in more detail in
  Section~\ref{sec:backward} when discussing the backward error.
\end{remark}

The overall computational complexity for efficiently storing the factored form of the
pencil equals $\mathcal{O}(dk^3)$ subdivided in the following parts.

The preprocessing step to bring the constant and leading coefficient matrix to suitable
triangular form requires solving a product eigenvalue problem or computing a Schur
decomposition which costs $\mathcal{O}(k^3)$ operations. Also the other matrices require
updating: $2(d-1)$ matrix-matrix products, assumed to take $\mathcal{O}(k^3)$ each are
executed.  Factoring  $T$ and $S$ is for free since no arithmetic operations
are involved.  Overall, computing the factored form requires thus $\mathcal{O}(dk^3)$
operations.


The cost of computing the efficient storage of a single identity-plus-spike matrix is
  essentially the one of computing $\CS{C}\underline{x}=\alpha e_1$. This requires computing $n=dk$ core
  transformations for 
$2k$ matrices. In total this sums up to $\mathcal{O}(dk^2)$.

\section{Operating with core transformations}
\label{sec:oper}

The proposed factorizations are entirely based on core transformations; we need three
basic operations to deal with them: a fusion, a turnover, and a pass-through operation.
To understand the flow of the algorithm better we will explain it with pictures.  
A core transformation is therefore pictorially denoted as $\begin{smallmatrix} \Rc\\[0.7ex] \rc
\end{smallmatrix}$, where the arrows pinpoint the rows affected by the transformation.

Two core transformations $F_i$ and $G_i$ undergo a \emph{fusion} when they operate
on identical rows and can be replaced by a single core tranformation $H_i=F_i G_i$.
Pictorially this is shown on the left of \eqref{fig:op:1}.
Given a product 
$F_{i} G_{i+1} H_{i}$ of three core transformations, then one can always  
refactor the product  as 
${F}_{i+1} {G}_{i}  {H}_{i+1} $. This operations is called a \emph{turnover}.\footnote{This
  is proved easily by considering the QR or QL factorization of a $3\times 3$ unitary
  matrix. We refer to Aurentz et al.\ \cite{AuMaVaWa15} for more details and to the \texttt{eiscor} package https://github.com/eiscor/eiscor for a reliable implementation.}  
This is shown pictorially on the right of
\eqref{fig:op:1}. 
  \begin{eqnarray}
  \begin{matrix}\Rc & \Rc\\\rc& \rc\end{matrix}\ 
= \ \begin{matrix}\Rc\\ \rc\end{matrix}\;,
\quad\quad\quad\quad
  \begin{matrix}
    \Rc & &\Rc\\
    \rc &\Rc &\rc\\
    &\rc &\\
  \end{matrix} \ = 
  \begin{matrix}
    &\Rc&\\
    \Rc &\rc &\Rc\\
    \rc& &\rc\\
  \end{matrix}\;.
\label{fig:op:1}
\end{eqnarray} 

The final operation involves core transformations and upper triangular matrices. A core
transformation can be moved from one side of an upper triangular matrix to the other
side: $R G_i = \tilde{G}_i \tilde{R} $, named a \emph{pass-through} operation.

When describing an algorithm with core transformations, typically one core transformation
is more important than the others and it is desired to move this transformation around.
Pictorally we represent the movement of a core transformation by an arrow. For instance
\begin{equation}
\label{eq:sp}
  \begin{tikzpicture}[scale=1.66,y=-1cm]
    \tikzrotation{-1.2}{0.2}
    \tikzrotation{-1.0}{0.0}
    \tikzrotation{-0.8}{0.2}
    \tikzrotation{-0.6}{0.0}
    \turnoverrl{-0.6}{0.0}{-1.2}{0.2}

    \node[above right] at (-.5,0.4) {,};

     \tikzrotation{.4}{0.0}
     \tikzrotation{.6}{0.2}
     \tikzrotation{.8}{0.0}
     \tikzrotation{1.0}{0.2}
     \turnoverrl{1.0}{0.2}{.4}{0.0}
     \node [above] at (1.2,0.4) {,};
    \tikzrotation[black][][]{1.8}{0.1}
    \tikzrotation[black][][]{2.2}{0.1}
    \path[->,out=180,in=0] (2.1,0.2) edge (1.85,0.2);     
   \node [above] at (2.4,0.4) {,};
  \end{tikzpicture}
\end{equation}
demonstrates pictorially the movement of a core transformation from the right to the left as the result of executing
a turnover (left and middle picture of \eqref{eq:sp}) and
a fusion (right picture of \eqref{eq:sp}), where the right rotation is fused with the one on the left.
We need pass-through operations in both directions, pictorially shown as
\begin{center}
  \begin{tikzpicture}[baseline={(current bounding box.center)},scale=1.5,y=-1cm]
    \uppertriangular{0.0}{0.0}{}; 
    \tikzrotation{1.4}{.4}{}
    \tikzrotation{-.2}{.4}{}
    \shiftthroughrl{1.4}{-.2}{0.4} 

    \node [above] at (2.05,0.7) {or};

    \uppertriangular{3}{0.0}{}; 
    \tikzrotation{4.4}{.4}{}
    \tikzrotation{2.8}{.4}{}
    \shiftthroughlr{2.8}{4.4}{0.4} 

    \node[above] at (4.6,0.6) {.};
  \end{tikzpicture}
\end{center}


In the description of the forthcoming algorithms we will also \textit{pass core
  transformations through the inverses of upper triangular matrices}. In
case of an invertible upper triangular matrix $R$, this does not pose any problems; numerically, however, this is unadvisable as we do not wish to invert $R$.
So instead of computing $R^{-1} G_i =\tilde{G}_i \tilde{R}^{-1}$ we compute $G^*_i R =
\tilde{R} \tilde{G}^*_i$, which can be executed numerically reliably (even when $R$ is
singular). 
 Pictorially
\begin{center}
   \begin{tikzpicture}[baseline={(current bounding box.center)},scale=1.66,y=-1cm]

    \tikzrotation[black][][$\tilde{G}_i$]{0.0}{0.6};
    \shiftthroughrl{1.4}{0.0}{0.6};
    \tikzrotation[black][][${G_i}$]{1.4}{0.6};
    \uppertriangular{0.0}{0.2}{$R^{-1}$};

    %
    \node[above] at (2.4,.7) {is computed as};    

    \tikzrotation[black][][${G}^*_i$]{3.5}{0.6};
    \shiftthroughlr{3.5}{4.9}{0.6};
    \tikzrotation[black][][$\tilde{G}_i^*$]{4.9}{0.6};
    \uppertriangular{3.5}{0.2}{$R$};

    \node[above] at (5.1,.7) {.};    

  \end{tikzpicture}
\end{center}

Not only will we pass core transformations through the inverses of upper triangular
matrices, we will also pass them through sequences of upper triangular matrices. Suppose,
e.g., that we have a product of $\ell$ upper triangular matrices $R_1\cdots R_\ell$ and we want
to pass $G_i$ from the right to the left through this sequence. Passing the core
transformation sequentially through $R_\ell$, $R_{\ell-1}$, up to $R_1$ provides us 
$
  (R_1\cdots R_\ell) G_i= \tilde{G}_i (\tilde{R}_1 \cdots \tilde{R}_\ell).
$
Or, simply writing $R=R_1\cdots R_\ell$ we have $RG_i=\tilde{G}_i\tilde{R}$, what is exactly
what we will often do to simplify the pictures and descriptions. Pictorially 

\begin{center}
   \begin{tikzpicture}[baseline={(current bounding box.center)},scale=1.66,y=-1cm]

    \tikzrotation[black][][$\tilde{G}_i$]{0.0}{.6};
    \shiftthroughrl{1.4}{0.0}{0.6};
    \uppertriangular{0.0}{0.2}{$R_1$};
    \tikzrotation{1.4}{.6};
    \shiftthroughrl{2.6}{1.4}{0.6};
    \uppertriangular{1.4}{0.2}{$R_2$};
    \tikzrotation{2.6}{.6};
    \node[above] at (3.0,.6) {$\cdots$};
    \draw[<-,densely dotted] (2.65,.7) -- (3.3,.7);
    \tikzrotation{3.4}{.6};
    \shiftthroughrl{4.6}{3.4}{0.6};
    \uppertriangular{3.4}{0.2}{$R_\ell$};
    \tikzrotation[black][][$G_i$]{4.6}{.6};

    %
     \node[above] at (5.6,.7) {is written as};    
     
     \tikzrotation[black][][$\tilde{G}_i$]{6.6}{.6};
     \shiftthroughrl{8.0}{6.6}{0.6};
     \uppertriangular{6.8}{.2}{$R$};
     \tikzrotation[black][][$G_i$]{8.0}{.6};
     
     \drawbrace{6.8}{7.8}{1.3}{$R=R_1\cdots R_\ell$}


    \node[above] at (8.2,1.0) {.};    

  \end{tikzpicture}
\end{center}

One important issue remains. We have described how to pass a core transformation from
one side to the other side of an upper triangular matrix. In practice, however, we do not
have dense upper triangular matrices, but a factored form as presented in
\eqref{eq:factorize:upper:triangular}, which we are able to update easily.
Since the rank-one part can be recovered 
from the
unitary matrices $\CS{C}$ and $\CS{B}$,  we can ignore it; 
passing a core transformation through the upper triangular matrix $R=
P^T \CS{C}^*(\CS{B} + e_{1}\underline{y}^{T}) P$ can be replaced by passing a core transformation
through the unitary part $\CS{C}^*\CS{B}$ only. So 
$RG_i=\tilde{G}_i\tilde{R}$ is computed as $
(\CS{C}^* \CS{B}) \underline{G}_i 
= \underline{\tilde{G}}_i(\tilde{\CS{C}}^* \tilde{\CS{B}})
$, where 
$\underline{G}_i=\left[\begin{smallmatrix} G_i &
    0\\ 0 & 1\end{smallmatrix}\right]
$ and 
$\underline{\tilde{G}}_i=\left[\begin{smallmatrix} \tilde{G}_i &
    0\\ 0 & 1\end{smallmatrix}\right]
$ and thus
$\underline{G}_i P = P G_i$ and
$\underline{\tilde{G}}_i P = P \tilde{G}_i$.
The pass-through operation
requires two turnovers as
pictorially shown, 
for $d=2$, $k=3$, and $n=dk=6$,
\begin{equation*}
    \begin{tikzpicture}[baseline={(current bounding box.center)},scale=1.66,y=-1cm]

      \tikzrotation[black][][$\tilde{G}_i$]{0.0}{0.6}; 
      \shiftthroughrl{1.4}{0.0}{0.6};
      \tikzrotation[black][][${G_i}$]{1.4}{0.6}; 
      \uppertriangular{0.0}{0.0}{$R$};

    %
      \node[above] at (2.5,.7) {is computed as};

    \foreach \j in {0,...,5}{
          \tikzrotation{\j/5+5.0}{\j/5}
    }      

    \foreach \j in {-1,...,4}{
          \tikzrotation{\j/5+3.8}{-\j/5+.8}
    }      

    \node at (3.6,0.6) [above] {$\underline{\tilde{G}_i}$};
     \tikzrotation[black][][]{3.6}{0.6}; 
     \tikzrotation[black][][]{4.2}{0.8}; 
     \turnoverrl{4.2}{0.8}{3.6}{0.6};

     \shiftthroughrl{5.4}{4.2}{.8};

     \tikzrotation[black][][]{5.4}{0.8}; 
     \turnoverrl{6.}{.6}{5.4}{.8};
     \node at (6.,0.6) [above] {$\underline{{G}}_i$};
     \node at (5.4,1.07) [below] {${W^*}$};
     \tikzrotation[black][][]{6.}{0.6}; 

      \node[above] at (6.3,.7) {.};s
    \end{tikzpicture}
\end{equation*}
We emphasize that $\underline{G}_i e_{n+1} = \underline{\tilde{G}}_i e_{n+1}=e_{n+1}$ and
$W^*e_1=e_1$ will always hold and these relations are used in the backward error analysis.

All these operations, i.e., a turnover, a fusion, and a pass-through, require a constant
$\mathcal{O}(1)$ number of arithmetic operations and are thus independent of the matrix
size. As a result, passing a core transformation through a sequence of compactly stored
upper triangular unitary-plus-rank-one matrices costs $\mathcal{O}(k)$, where $k$ is
the number of upper triangular matrices involved. Moreover, the turnover and the fusion
are backward stable operations \cite{AuMaVaWa15}, they introduce only errors of the order of the machine
precision on the original matrices. The stability of a pass-through operation involving
factored upper-triangular-plus-rank-one matrices will be discussed in
Section~\ref{sec:backward}.

\section{Transformation to Hessenberg-triangular form}
\label{sec:reduction}
The companion matrix $S$ has $k$ nonzero subdiagonals. To efficiently compute the
eigenvalues of the pencil $(S,T)$ via Francis's implicitly shifted QR algorithm
\cite{Fr61,Fr62} we need a unitary equivalence to transform the pencil to
Hessenberg-triangular form. 


We will illustrate the reduction procedure on the Frobenius factorization \eqref{f:1}, though it can
be applied equally well on the Gaussian factorization \eqref{f:2}.
We search for unitary matrices $U$ and $V$ such that the pencil ($U^* S V $, $U^* T V$) is
of Hessenberg-triangular form. 
We operate directly on the factorized versions of $S$ and $T$ in $U^* S V V^* T^{-1} U$
and we compute 
\begin{eqnarray*}
  & & (U^* S_1 U_1)\; (U_1^* S_2 U_2)\;  \cdots\; (U_{k-1}^* S_k V)\;  (V^* T_k^{-1} V_{k-1})\;
  (V_{k-1}^* T_{k-1}^{-1} V_{k-2})\; \cdots\; (V_1^* T_1^{-1} U) \\
& = & \tilde{S}_1 \; \tilde{S}_2\;  \cdots\; \tilde{S}_k \; \tilde{T}_k^{-1}\;\tilde{T}_{k-1}^{-1}\; \cdots\; \tilde{T}_{1}^{-1}
, 
\end{eqnarray*}
where $\tilde{S}_1$ is Hessenberg and all other factors $\tilde{S}_2,\ldots
\tilde{S}_k,\tilde{T}_1,\ldots,\tilde{T}_k$ are upper triangular. Every time we have to
pass a core transformation through an $n\times n$ upper triangular factor we
use the efficient representation and the tools described in Section~\ref{sec:oper} to pass it through the product and the
inverse factors. 

We illustrate the procedure on a running example with $k=3$ and $d=2$, so the matrices
are of size $6\times 6$ and the product is of the form 
\begin{equation*}
  \begin{tikzpicture}[baseline={(current bounding box.center)},scale=1.5,y=-1cm]
    \upperhessenberg{0.0}{0.0}{$S_1$};
    \upperhessenberg{1.4}{0.0}{$S_2$};
    \upperhessenberg{2.8}{0.0}{$S_3$};
    \uppertriangular{4.2}{0.0}{$T_3^{-1}$}; 
    \uppertriangular{5.6}{0.0}{$T_2^{-1}$}; 
    \uppertriangular{7.0}{0.0}{$T_1^{-1}$}; 
    \drawbrace{4.2}{8.0}{1.1}{$T^{-1}=T_3^{-1} T_2^{-1} T_1^{-1}$};
    \drawbrace{0.0}{3.8}{1.1}{$S=S_1S_2S_3$};
    \node at (8.2,.5) {.};
  \end{tikzpicture}
%
\end{equation*}
However, we do not work on these Hessenberg matrices, but directly on their QR
factorizations. Pictorially, 
where for simplicity of presentation
we have replaced
$T_3^{-1} T_2^{-1} T_1^{-1}$ by $T^{-1}$ we get
\begin{equation*}
  \begin{tikzpicture}[baseline={(current bounding box.center)},scale=1.5,y=-1cm]
  \foreach \j in {0.0,0.2,0.4,...,.8} {
    \tikzrotation{\j-0.6}{\j}
  }      
  \uppertriangular{-.4}{0.0}{$R_1$};

  \foreach \j in {0.0,0.2,0.4,...,.8} {
    \tikzrotation{\j+1.0}{\j}
  }      
  \uppertriangular{1.2}{0.0}{$R_2$};

  \foreach \j in {0.0,0.2,0.4,...,.8} {
    \tikzrotation{\j+2.6}{\j}
  }      
  \uppertriangular{2.8}{0.0}{$R_3$};

  \uppertriangular{4.2}{0.0}{$T^{-1}$}; 
  \node at (5.4,0.5) {.};
  \end{tikzpicture}
\end{equation*}
It remains to remove all subdiagonal elements of the matrices $S_2$ and $S_3$,
this means that all core transformations between matrices $R_1$ and $R_2$, and  the
matrices $R_2$ and $R_3$ need
to be removed. We will first remove all the core transformations acting on rows $1$ and
$2$, followed by those acting on rows $2$ and $3$, and so forth. We remove transformations
from the
right to the left, so first the top core transformation between $R_2$ and $R_3$ is removed
followed
by the top core transformation between $R_1$ and $R_2$.


First we bring the top core transformation in the last sequence to the outer left. 
The
transformation has to undergo two pass-through operations: one with $R_2$ and one with
$R_1$, and two turnovers to get it there
\begin{equation*}
  \centering
  \begin{tikzpicture}[baseline={(current bounding box.center)},scale=1.5,y=-1cm]
  \tikzrotation{-1.6}{0.4}{};
  \tikzrotation{-1.0}{0.2}{};
  \turnoverrl{-1.0}{0.2}{-1.6}{0.4}
  \shiftthroughrl{.4}{-1.0}{0.2} 

  \foreach \j in {0.0,0.2,0.4,...,.8} {
    \tikzrotation{\j-1.6}{\j}
  }      
  \uppertriangular{-1.0}{0.0}{};

  \tikzrotation{.4}{0.2}{};
  \tikzrotation{1.0}{0.0}{};
  \turnoverrl{1.0}{0.0}{.4}{0.2}
  \shiftthroughrl{2.6}{1.0}{0.0} 

  \foreach \j in {0.0,0.2,0.4,...,.8} {
    \tikzrotation{\j+0.6}{\j}
  }      
  \uppertriangular{1.2}{0.0}{};

  \foreach \j in {0,...,4} {
    \tikzrotation{2.6+\j/5}{\j/5}
  }      
  \uppertriangular{2.8}{0.0}{$S_3$};

  \uppertriangular{4.2}{0.0}{$T^{-1}$}; 
  \node at (5.4,.5) {.};
  \end{tikzpicture}  
\end{equation*}

To continue the chasing a similarity transformation is executed removing the rotation from
the left and bringing it to the right of the product. Pictorially 
\begin{equation*}
  \begin{tikzpicture}[baseline={(current bounding box.center)},scale=1.5,y=-1cm]
  \tikzrotation{-.4}{0.4}{};
  \transferbulgelr{-.4}{5.5}{0.4}
  \tikzrotation{5.5}{0.4}{};


  \foreach \j in {0.0,0.2,0.4,...,.8} {
    \tikzrotation{\j-.4}{\j}
  }      
  \uppertriangular{-.2}{0.0}{};


  \foreach \j in {0.0,0.2,0.4,...,.8} {
    \tikzrotation{\j+1.2}{\j}
  }      
  \uppertriangular{1.4}{0.0}{};

  \foreach \j in {1,...,4} {
    \tikzrotation{2.6+\j/5}{\j/5}
  }      
  \uppertriangular{2.8}{0.0}{$S_3$};

  \uppertriangular{4.2}{0.0}{$T^{-1}$}; 
  \node at (5.7 ,.5) {.};
  \end{tikzpicture}
\end{equation*}
As a result we now have a core transformation on the outer right operating on rows $3$ and
$4$. Originally this transformation was acting on rows $1$ and $2$. Every time we do a
turnover the core transformation moves down a row. The operation of moving a core
transformation to the outer left and then bringing it back to the right via a similarity
transformation is vital in all forthcoming algorithms, we will name this a
\emph{sweep}. Depending on the number of turnovers executed in a sweep, the
effect is clearly a downward move of the involved core transformation. Finally it hits the
bottom and gets fused with another core transformation.

We continue this procedure and try to execute another sweep: we
move the transformation on the outer right back to the outer left by $5$ pass-through operations ($3$ are needed
to pass
the transformation through $T^{-1}=T_3^{-1} T_2^{-1} T_1^{-1}$) and $2$
turnovers. At the end, the core transformation under consideration operates on the bottom
two rows as it was moved down $2$ times. 
 The result looks like 
\begin{equation*}
  \centering
  \begin{tikzpicture}[baseline={(current bounding box.center)},scale=1.5,y=-1cm]



  \foreach \j in {0.0,0.2,0.4,...,.8} {
    \tikzrotation{\j-1.6}{\j}
  }      

  \tikzrotation{-.4}{0.8}{};
  \shiftthroughrl{.8}{-.4}{0.8};  

  \uppertriangular{-1.0}{0.0}{};


  \foreach \j in {0.0,0.2,0.4,...,.8} {
    \tikzrotation{\j+.4}{\j}
  }   
   
  \tikzrotation{.8}{0.8}{};
  \turnoverrl{1.4}{0.6}{.8}{0.8}
  \tikzrotation{1.4}{0.6}{};
  \shiftthroughrl{2.4}{1.4}{0.6};  
  \uppertriangular{1.0}{0.0}{};

  \foreach \j in {1,...,4} {
    \tikzrotation{2.2+\j/5}{\j/5}
  }

  \tikzrotation{2.4}{0.6}{};
  \turnoverrl{3.0}{0.4}{2.4}{0.6}
  \tikzrotation{3.0}{0.4}{};
  \shiftthroughrl{4.2}{3.0}{0.4};  
  \uppertriangular{2.8}{0.0}{};

  \tikzrotation{4.2}{0.4}{};
  \shiftthroughrl{5.5}{4.2}{0.4};
  \uppertriangular{4.2}{0.0}{}; 

  \tikzrotation{5.5}{0.4}{};

\node at (5.7,.5) {.};


  \end{tikzpicture}
%
\end{equation*}
At this point it is no longer possible to 
move the core transformation further to the left. We can get rid of it by
fusing it with the bottom core transformation of the first sequence. 
%
 We have removed a single core
transformation and it remains to chase the others in a similar fashion. Pictorially
we have
\begin{equation*}
  \begin{tikzpicture}[baseline={(current bounding box.center)},scale=1.5,y=-1cm]



  \foreach \j in {0.0,0.2,0.4,...,.8} {
    \tikzrotation{\j-1.6}{\j}
  }      

  \tikzrotation{-.4}{0.8}{};
  \shiftthroughrl{-.4}{-.8}{.8}

  \uppertriangular{-1.0}{0.0}{};


  \foreach \j in {0.0,0.2,0.4,...,.8} {
    \tikzrotation{\j+.4}{\j}
  }   
   
  \uppertriangular{1.0}{0.0}{};

  \foreach \j in {1,...,4} {
    \tikzrotation{2.2+\j/5}{\j/5}
  }

  \uppertriangular{2.8}{0.0}{};

  \uppertriangular{4.2}{0.0}{}; 
  \node at (5.4,.5) {.};
  \end{tikzpicture}
\end{equation*}

The top core transformation in the second sequence is now marked for removal.
Pictorially we have accumulated all steps leading to
\begin{equation*}
  \begin{tikzpicture}[baseline={(current bounding box.center)},scale=1.5,y=-1cm]

  \foreach \j in {0.0,0.2,0.4,...,.8} {
    \tikzrotation{\j-1.6}{\j}
  }

  \shiftthroughrl{.4}{-1.2}{0};  
  \tikzrotation{-1.2}{0}{};
  \turnoverrl{-1.2}{0}{-1.8}{.2}

  \tikzrotation{-1.8}{.2}{};
  \transferbulgelr{-1.8}{5.6}{.2}
  \tikzrotation{5.6}{.2}{};

  \shiftthroughrl{5.6}{4.2}{.2};
  \uppertriangular{4.2}{0.0}{}; 
  \tikzrotation{4.2}{.2}{};
  \shiftthroughrl{4.2}{2.8}{.2};  
  \uppertriangular{2.8}{0.0}{};
  \tikzrotation{2.8}{.2}{};

  \turnoverrl{2.8}{.2}{2.2}{.4}
  \tikzrotation{2.2}{.4}{};
  \shiftthroughrl{2.2}{1.2}{0.4} 
  \uppertriangular{1.0}{0.0}{};
  \tikzrotation{1.2}{0.4}{};
  
  \turnoverrl{1.2}{.4}{0.6}{.6}
  \tikzrotation{0.6}{.6}{};
  \shiftthroughrl{.6}{-.6}{.6}
  \uppertriangular{-1.0}{0.0}{};
  \tikzrotation{-0.6}{.6}{};

  \turnoverrl{-.6}{.6}{-1.2}{.8}
  \tikzrotation{-1.2}{.8}{};
  \transferbulgelr{-1.2}{5.6}{.8}
  \tikzrotation{5.6}{.8}{};

  \shiftthroughrl{5.6}{4.2}{.8}
  \tikzrotation{4.2}{.8}{};
  \shiftthroughrl{4.2}{3.4}{.8};  
  \tikzrotation{3.4}{.8}{};
  \shiftthroughrl{3.4}{3.0}{.8};

  \foreach \j in {0.0,0.2,0.4,...,.8} {
    \tikzrotation{\j+.4}{\j}
  }

  \foreach \j in {1,...,4} {
    \tikzrotation{2.2+\j/5}{\j/5}
  }

\node at (5.8,.5) {.};
  \end{tikzpicture}
\end{equation*}

The next core transformation to be chased is the second one of the outer right sequence of core
transformations we are handling. Pictorially all steps of the chasing  at once look like
\begin{equation*}
  \begin{tikzpicture}[baseline={(current bounding box.center)},scale=1.5,y=-1cm]
  \transferbulgelr{-1.4}{5.6}{.6}


  \foreach \j in {0.0,0.2,0.4,...,.8} {
    \tikzrotation{\j-1.6}{\j}
  }


  \tikzrotation{-1.4}{.6}{};
  \turnoverrl{-.8}{.4}{-1.4}{.6}
  \tikzrotation{-.8}{0.4}{};
  \shiftthroughrl{.4}{-.8}{.4};  
  \uppertriangular{-1.0}{0.0}{};

  \tikzrotation{.4}{0.4}{};
  \turnoverrl{1.0}{0.2}{.4}{0.4}
  \tikzrotation{1.0}{0.2}{};
  \shiftthroughrl{2.4}{1.0}{0.2} 

  \foreach \j in {0.2,0.4,...,.8} {
    \tikzrotation{\j+.4}{\j}
  }

  \shiftthroughrl{1.6}{1.2}{.8}
  \tikzrotation{1.6}{.8}{};
  \shiftthroughrl{2.6}{1.6}{0.8};  
  \uppertriangular{1.0}{0.0}{};

  \foreach \j in {1,...,4} {
    \tikzrotation{2.2+\j/5}{\j/5}
  }

  \tikzrotation{2.6}{.8}{};
  \turnoverrl{3.2}{.6}{2.6}{.8}
  \tikzrotation{3.2}{.6}{};
  \shiftthroughrl{4.6}{3.2}{.6};  
  \uppertriangular{2.8}{0.0}{};

  \tikzrotation{4.6}{.6}{};
  \shiftthroughrl{5.6}{4.6}{.6};
  \uppertriangular{4.2}{0.0}{}; 

  \tikzrotation{5.6}{.6}{};
\node at (5.8,.5) {.};
  \end{tikzpicture}
\end{equation*}
The entire procedure to chase a single core transformation consists of executing
sweeps until the core transformation hits the bottom and can fuse with another core transformation.





At the very end
we obtain the
 factorization 
\begin{equation}
  \label{fig:red:9}
    \begin{tikzpicture}[baseline={(current bounding box.center)},scale=1.5,y=-1cm]
  \foreach \j in {0.0,0.2,0.4,...,.8} {
    \tikzrotation{\j-0.2}{\j}
  }      
  \uppertriangular{0}{0.0}{};
  \drawbrace{-0.2}{1.0}{1.1}{$\tilde{S}_1$}

  \uppertriangular{1.4}{0.0}{$\tilde{S}_2$};

  \uppertriangular{2.8}{0.0}{$\tilde{S}_3$};

  \uppertriangular{4.2}{0.0}{$\tilde{T}_{3}^{-1}$}; 
  \uppertriangular{5.6}{0.0}{$\tilde{T}_{2}^{-1}$}; 
  \uppertriangular{7.0}{0.0}{$\tilde{T}_{1}^{-1}$}; 
\node at (8.2,.5) {.};
  \end{tikzpicture} 
\end{equation}
This matrix is in upper Hessenberg form and on this factorization we will run
the product eigenvalue problem.

The algorithm annihilates the unwanted core transformations acting on the first rows, followed
    by those acting on the second row, and so forth. As a consequence a single full sweep
    from right to left always takes $k$
    turnovers and $2k$ pass-through operations, and gets thus
    a complexity $\mathcal{O}(k)$. To get
    rid of a single core transformation $G_i$ we need approximately
    $\lfloor(dk-i)/k\rfloor$ sweeps leading to an approximate complexity count
    \begin{equation*}
      \sum_{i=1}^{dk} \sum_{j=1}^{k-1} \frac{dk-i}{k} k = \mathcal{O}(d^2 k^3), 
    \end{equation*}
    where $i$ runs over the core transformations and $j$ over the sequences.

\section{Product eigenvalue problem}
\label{sec:pep}

The following discussion is a concise description of the actual QR algorithm. It is based on the results of Aurentz, Mach, Vandebril, and Watkins
\cite{AuMaVaWa15,VaWa12} combined with Watkins's interpretation of product eigenvalue
problems \cite{q994}.
For simplicity, we will only describe a single shifted QR step in the Hessenberg case, for
information beyond the Hessenberg case we refer to Vandebril \cite{Va10d}.

Suppose we have a Hessenberg-triangular pencil
$(\tilde{S},\tilde{T})$, with $\tilde{S}=\tilde{S}_1 \cdots
\tilde{S}_k$ and $\tilde{T}=\tilde{T}_1 \cdots \tilde{T}_k$, where  $\tilde{S}_1$ is of
Hessenberg form and all other factors upper triangular and nonsingular. The algorithm for solving the
generalized product eigenvalue problem can be seen as a QR algorithm applied on
$\tilde{S}\tilde{T}^{-1}$. Pictorially, for $d=2$, $k=3$, and $n=dk=6$, this looks as in
\eqref{fig:red:9}. As before all upper
triangular factors are combined into a single one, where
$\tilde{S}_1=\tilde{Q}_1 \tilde{R}_1$ is a QR factorization of $\tilde{S}_1$. Pictorially
we get
\begin{center}
  \begin{tikzpicture}[scale=1.66,y=-1cm]
    \node[above] at (-1.2,.6) {$\tilde{S}\tilde{T}^{-1}=$};
    \foreach \j in {0.0,0.2,0.4,...,.8} {
      \tikzrotation{\j-0.4}{\j}
    }      
    \drawbrace{-0.4}{0.4}{1.1}{$\tilde{Q}$}
    \uppertriangular{1.2}{0.0}{}
    \drawbrace{1.2}{2.2}{1.1}{$\tilde{R}_1\tilde{S}_2\tilde{S}_3\tilde{T}^{-1}_3\tilde{T}^{-1}_2\tilde{T}^{-1}_1$}
    %
    \node[above] at (2.5,.6) {.};      
  \end{tikzpicture}
\end{center}

To initiate the core chasing algorithm we pick a 
suitable shift $\mu$ and form $u = (\tilde{S} - \mu \tilde{T}) e_{1}$. 
The initial similarity transformation is determined by the core transformation $U_1$ such
that $U_{1}^* u = \alpha e_{1}$ for some $\alpha$. 
We fuse the two outer left core transformations and pass the core transformation 
 $U_1$ on the
right  through the upper triangular matrix to get a new core transformation
$X_1$. Pictorially we get the left of \eqref{eq:first_bulge}. The resulting matrix is not of upper Hessenberg form anymore, it is perturbed by the
core transformation $X_1$. We will chase this core transformation to the bottom. We name
this core transformation $X_1$ the \emph{misfit}. A turnover will move $X_1$ to the outer
left. Pictorially we get the right of \eqref{eq:first_bulge}.

\begin{equation}\label{eq:first_bulge}
  \begin{tikzpicture}[baseline={(current bounding box.center)},scale=1.66,y=-1cm]
    \tikzrotation[black][][$U_{1}^*$]{-.4}{0}
     \draw[->] (-.35,0.1) -- (-.1,0.1);  

    \foreach \j in {0.0,0.2,0.4,...,.8} {
      \tikzrotation{\j}{\j}
    }  
    \uppertriangular{.6}{0.0}{}
    \node[above] at (2.3,.6) {};

    \pgfmathsetmacro{\xuone}{-.3}
    \pgfmathsetmacro{\xutwo}{0.4}
    \pgfmathsetmacro{\xuthree}{3.2}
    \pgfmathsetmacro{\xufour}{3.9}
    \pgfmathsetmacro{\xufive}{5.9}

    \tikzrotation{\xutwo}{0}
    \node at (\xutwo,-0.2) [align=center] {$X_{1}$};
    \shiftthroughrl{2.0}{\xutwo}{0.0}
    \tikzrotation[black][][$U_{1}$]{2.0}{0}

  \end{tikzpicture}
\qquad\qquad\quad
  \begin{tikzpicture}[baseline={(current bounding box.center)},scale=1.66,y=-1cm]
    \foreach \j in {0.0,0.2,0.4,...,.8} {
      \tikzrotation{\j}{\j}
    }  
    \uppertriangular{.6}{0.0}{}
   \node[above] at (2.3,.6) {$$};
    
    \pgfmathsetmacro{\xuone}{-.2}
    \pgfmathsetmacro{\xutwo}{0.4}
    \pgfmathsetmacro{\xuthree}{3.9}
    
    \tikzrotation[black][][][$X_{1}$]{\xutwo}{0}
    \tikzrotation[black][][${U}_{2}$]{\xuone}{0.2}

    \turnoverrl{\xutwo}{0.0}{\xuone}{0.2}
    

  \end{tikzpicture}
\end{equation}
Next we execute a similarity transformation with $U_2$. This will cancel out $U_2$ on the
left and bring it to the right. Next pass
$U_{2}$ through the upper triangular matrix. Pictorially the flow looks like the left of \eqref{eq:ch:2}.
This looks similar to \eqref{eq:first_bulge}, except for the misfit
 which has moved downward one row.  We can continue now by executing a turnover, a
 similarity, and a pass-through to move the misfit down one more position. Pictorially we
 end up in the right of \eqref{eq:ch:2}.
\begin{equation}
\label{eq:ch:2}
  \begin{tikzpicture}[baseline={(current bounding box.center)},scale=1.66,y=-1cm]
    \foreach \j in {0.0,0.2,0.4,...,.8} { \tikzrotation{\j}{\j} }
    \tikzrotation[black][][$U_2$]{-.2}{.2}{};
    \transferbulgelr{-.2}{2.2}{.2}    
    \uppertriangular{.8}{0.0}{}

    \pgfmathsetmacro{\xuone}{-.3} 
    \pgfmathsetmacro{\xutwo}{0.6}
    \pgfmathsetmacro{\xuthree}{3.2} 
    \pgfmathsetmacro{\xufour}{3.9}
    \pgfmathsetmacro{\xufive}{5.9}

    \tikzrotation{\xutwo}{0.2} 
    \node at (\xutwo,0.0) [align=center] {$X_{2}$};
    \shiftthroughrl{2.2}{\xutwo}{0.2} 
    \tikzrotation[black][][$U_{2}$]{2.2}{0.2}

  \end{tikzpicture}
\qquad\qquad\quad
  \begin{tikzpicture}[baseline={(current bounding box.center)},scale=1.66,y=-1cm]
    \foreach \j in {0.0,0.2,0.4,...,.8} {
      \tikzrotation{\j}{\j}
    }  
    \uppertriangular{1.2}{0.0}{}
    
    \pgfmathsetmacro{\xuone}{0.0}
    \pgfmathsetmacro{\xutwo}{0.6}
    \pgfmathsetmacro{\xuthree}{3.5}
    \pgfmathsetmacro{\xufour}{.8}
    
    \tikzrotation[black][][][$X_{2}$]{\xutwo}{0.2}
    \tikzrotation[black][][][\;$X_{3}$]{\xufour}{0.4}
    \tikzrotation[black][][${U}_{3}$]{2.6}{0.4}

    \turnoverrl{\xutwo}{0.2}{\xuone}{0.4}
    
    \tikzrotation[black][][$U_{3}$]{-.0}{0.4}
    \transferbulgelr{\xuone}{2.6}{0.4}
    \shiftthroughrl{2.6}{\xufour}{0.4} 

  \end{tikzpicture}
\end{equation}

After $n-2$ similarities we are not able to execute a turnover anymore. The final core
transformation $X_{n-1}$ fuses with the last core transformation of the sequence and we are done
\begin{center}
  \begin{tikzpicture}[scale=1.66,y=-1cm]
    \foreach \j in {0.0,0.2,0.4,...,.8} {
      \tikzrotation{\j+.4}{\j}
    }  
    \uppertriangular{1.4}{0.0}{}

    \pgfmathsetmacro{\xuone}{1.6}
    \tikzrotation[black][][][$X_{n-1}$]{\xuone}{.8}
    \path[->,out=180,in=0] (\xuone-0.1,.9) edge (\xuone-0.35,.9);
\node at (2.6,.5) {.};
  \end{tikzpicture}
\end{center}

In fact 
we continue executing sweeps until the core transformation hits the bottom and gets fused.
After few QR steps a deflation will occur. Classically a deflation in a Hessenberg matrix
is signaled by subdiagonal elements being relatively small with respect to the
neighbouring diagonal elements. Before being able to utilize this convergence criterion we
would need to compute and accumulate all diagonal elements of the compactly stored upper
triangular factors.
In this factored form, however, a cheaper and more reliable \cite{MaVa14} criterion is more suitable.
Deflations are signaled by almost diagonal
core transformations in the descending sequence preceding the upper triangular factors.
Only a simple check of the core transformations is required and we do not need to extract the diagonal elements out the compact
representation, nor do we need to accumulate them.

Solving the actual product eigenvalue problem requires to compute $n=dk$ eigenvalues,
  where on average each eigenvalue should be found in a few QR steps. A single QR step
  requires the chasing of an artificially introduced core transformation. During each sweep this
  core transformation moves down a row because of a single turnover, hence $n=dk$ sweeps are
  required, each taking $2k$
  pass-throughs and one turnover operations. In total this amounts to $\mathcal{O}(d^2 k ^3)$.

\section{Removing infinite and zero eigenvalues}
\label{sec:deflation}

Typically the unitary-plus-spike matrices in the factorizations will be nonsingular,
but there are exceptions.  If the pencil has a zero eigenvalue, then $S$ 
will be singular, and therefore one of the factors $R_{i}$
will necessarily be singular.  If there is an infinite eigenvalue, $T$ will be singular, 
so one of the factors $T_{i}$ must be singular.  
We must therefore ask whether singularity of any of these factors can cause any
difficulties.  

As we shall see, infinite eigenvalues present no problems.
They are handled automatically by our algorithm.  Unfortunately we cannot say the 
same for zero eigenvalues.   For the proper functioning of the Francis QR iterations, 
any exactly zero eigenvalues must be detected and deflated out beforehand.  We 
will present a procedure for doing this.  

\subsection{Singular unitary-plus-rank-one matrices}

We begin by characterizing singularity of a unitary-plus-spike factor.  
Suppose $R=R_i$ \eqref{eq:ri}  is singular.  (The same considerations apply to $T_{i}$.)  
There must be a zero on the main diagonal, and this can occur only at the intersection of the diagonal
and the spike, i.e.\ at position $(\ell,\ell)$, where $\ell = n + 1 - i$.  $R$ is stored in the 
factored form $R = P^{T}C_{n}^{*}\cdots C_{1}^{*}(B_{1}\ldots B_{n} + e_{1}y^{T})P$.
The validity of this representation was established in \cite{AuMaVaWa15}, and it 
remains valid even though $R$ is singular.  Let's see how singularity shows 
up in the representation.  

Recall from the construction of $\mathcal{C}$ and $\mathcal{B}$ that
$C_{\ell+1},\ldots,C_n$, all have active part
$\left[\begin{smallmatrix} 0 & 1\\ 1 & 0\end{smallmatrix}\right]$.  In other words  
$C_{j} = F_{j}$ for $j=\ell + 1$, \ldots, $n$.  But now the additional  zero element 
at position $\ell$ in $\underline{x}$ implies that also $C_{\ell} = F_{\ell}$ as well.
But then $B_{\ell} = C_{\ell}F_{\ell} = F_{\ell}^{2} = I$.
The converse holds as well:
$B_{\ell}$ is trivial if and only if $R$ has a zero at the $\ell$th diagonal position.  
All of the other $B_{j}$ are equal to $C_{j}$, and these are all nontrivial.



This is the situation at the time of the initial construction of $R$.  In the course of the 
reduction algorithm and subsequent Francis iterations, $R$ is modified repeatedly by 
having core transformations passed through it, but it continues to be singular and
it continues to contain one trivial $B_{i}$ core transformation, as 
Theorem~\ref{thm:rtriangular} below demonstrates.    As preparation for this theorem
we remark that Theorems~4.2, 4.3, and 4.7 of \cite{AuMaVaWa15} remain valid, 
ensuring that all of the $C_{i}$ remain nontrivial.

\begin{theorem}[Modification of \protect{\cite[Theorem~4.3]{AuMaVaWa15}}]
\label{thm:rtriangular}
  Consider a factored unitary-plus-rank-one matrix $R = P^{T}\mathcal{C}^{*}(\mathcal{B} + e_{1}y^{T})P = P^{T}C_{n}^{*}\cdots C_{1}^{*}(B_{1}\cdots
  B_{n} + e_{1}y^{T})P$, with core transformations $C_{1}$, \ldots, $C_{n}$
  nontrivial.  Then $R$ is upper triangular.  Moreover $B_\ell$ is trivial if and only if the 
  $\ell$th  diagonal element of $R$ equals zero. 
\end{theorem}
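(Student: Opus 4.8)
The claim is really two statements bundled together: (i) any matrix in the stated factored form $R = P^{T}\mathcal{C}^{*}(\mathcal{B}+e_{1}y^{T})P$ with all $C_i$ nontrivial is upper triangular; and (ii) the trivial-$B_\ell$ criterion detects exactly the zeros on the diagonal of $R$. The plan is to reduce (i) to the cited Theorem~4.3 of \cite{AuMaVaWa15} and then to extract (ii) from the proof of (i) by tracking one extra invariant, rather than re-deriving everything.

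For (i), the first step is to observe that the hypothesis is literally the hypothesis of \cite[Theorem~4.3]{AuMaVaWa15} once one strips off the cosmetic $P$ and $P^T$: upper-triangularity of $P^T(\cdot)P$ is equivalent to the $(n{+}1)\times(n{+}1)$ matrix $\mathcal{C}^{*}(\mathcal{B}+e_{1}y^{T})$ being upper triangular in its first $n$ rows and columns with a zero last row and a $(n{+}1,\ell)$-type structure as in \eqref{eq:x:nw}. So the essential content is: if $\mathcal{C}=C_1\cdots C_n$ is a descending sequence of core transformations all of which are nontrivial, then $\mathcal{C}^{*}(\mathcal{B}+e_1 y^T)$ is upper triangular. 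The argument in \cite{AuMaVaWa15} proceeds by induction peeling off one core transformation at a time from the descending sequence; I would reproduce its skeleton here: write $\mathcal{C}^* = C_1^*(C_2\cdots C_n)^*$, note that $C_1^*$ acts only on rows $1,2$, and show that the $(2,1)$ entry of $\mathcal{C}^*(\mathcal{B}+e_1y^T)$ vanishes precisely because $C_1$ is the core transformation that was built to zero out the corresponding subdiagonal entry — this is where nontriviality of $C_1$ is used (a trivial $C_1$ would be the counteridentity $F_1$, which does not zero anything). Then one restricts to rows/columns $2,\dots,n{+}1$ and repeats. The key observation making the induction legal is that the descending-sequence structure is preserved under this peeling and that the rank-one bump $e_1 y^T$ only ever affects the first row, so it is transparent to all core transformations $C_2,\dots,C_n$.

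For (ii), I would run the \emph{same} induction but additionally carry along the position of the ``break'' in the comparison between $\mathcal{C}$ and $\mathcal{B}$. The clean way: note that $\mathcal{B}=\mathcal{C}\underline{Y}$ holds identically (this is how $\mathcal{B}$ was defined in \eqref{eq:factorize:upper:triangular}), so $\mathcal{C}^*\mathcal{B}=\underline{Y}$, and hence $R$ differs from $P^T\underline{Y}P$ only by the rank-one term $P^T\mathcal{C}^* e_1 y^T P$. The matrix $\underline{Y}$ in \eqref{eq:x:nw} has exactly one zero on its diagonal, at position $\ell$. The rank-one correction adds $y^T$ (up to scaling) into the $\ell$-th column only, since $y = e_\ell$. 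Therefore the $\ell$-th diagonal entry of $R$ is $0 + (\text{first component of }\mathcal{C}^* e_1)\cdot(\ell\text{th component of }y)$, i.e.\ proportional to $e_1^{*}\mathcal{C} e_\ell = e_1^*(C_1\cdots C_n)e_\ell$; every other diagonal entry of $R$ equals the corresponding diagonal entry of $\underline{Y}$, which is $1$. So the $\ell$-th diagonal element of $R$ vanishes iff $e_1^*(C_1\cdots C_n)e_\ell = 0$. The final step is to show this quantity vanishes iff $B_\ell$ is trivial: the $(1,\ell)$ entry of a product of core transformations in a descending sequence is the product of the ``turning'' cosine-type entries $c_1 c_2 \cdots c_{\ell-1}$ times the appropriate entry of $C_\ell$ (because $C_{\ell+1},\dots,C_n$ do not touch row $1$ or column $\ell$ once one is in the descending pattern); since all $C_i$ with $i<\ell$ are nontrivial their relevant entries are nonzero, so the product vanishes iff the $(1,1)$-type entry of $C_\ell$ that appears is zero, which by the turnover bookkeeping is exactly the condition that $B_\ell = C_\ell F_\ell$ collapses to the identity, i.e.\ $C_\ell = F_\ell$.

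The main obstacle I anticipate is the last bookkeeping step: pinning down exactly which scalar entry of $C_\ell$ controls both the vanishing of $e_1^*\mathcal{C}e_\ell$ and the triviality of $B_\ell=C_\ell F_\ell$, after $\mathcal{C}$ has been modified by repeated turnovers and pass-throughs (so the $C_i$ are no longer the ones from the original construction). I would handle this by not tracking individual entries through the chasing at all, but instead invoking the cited Theorems~4.2, 4.3, and 4.7 of \cite{AuMaVaWa15} to guarantee that the factored form, the descending pattern, and the nontriviality of $C_1,\dots,C_n$ are \emph{maintained}, and then running the static argument above (via $\mathcal{C}^*\mathcal{B}=\underline{Y}$) on whatever the current factors happen to be; the ``$B_\ell$ trivial $\Leftrightarrow$ $C_\ell=F_\ell$'' equivalence is then immediate from $B_\ell=C_\ell F_\ell$ and $F_\ell^2=I$, and the connection to the diagonal zero follows from the entry-of-a-descending-product computation, which only needs that the other $C_i$ are nontrivial — a hypothesis we are handed.
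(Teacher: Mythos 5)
Your plan for the second assertion has a genuine gap, and it sits exactly where you flagged your "main obstacle." The pivot of your static argument is the identity $\mathcal{C}^*\mathcal{B}=\underline{Y}$ with $\underline{Y}$ the specific matrix of \eqref{eq:x:nw}, from which you conclude that $R$ is identity-plus-spike, that all diagonal entries other than the $\ell$th equal $1$, and that the $\ell$th one is proportional to $e_\ell^{T}\mathcal{C}^{*}e_1$. That identity holds only at the moment the representation is first built. The theorem, however, is needed (and is stated) for the factors \emph{after} repeated pass-through operations: the surrounding text introduces it precisely to show that $R$ stays upper triangular and keeps one trivial $B_\ell$ throughout the reduction and the Francis iterations. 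The maintenance results you invoke (Theorems~4.2, 4.3, 4.7 of \cite{AuMaVaWa15}) preserve the descending patterns and the nontriviality of the $C_i$, but they do not preserve $\mathcal{C}^*\mathcal{B}=\underline{Y}$: after a pass-through one has $\tilde{\mathcal{C}}^*\tilde{\mathcal{B}}=\underline{U}^*\mathcal{C}^*\mathcal{B}\,\underline{V}$ with $U\neq V$ in general, so the unitary part is no longer $\underline{Y}$, the vector $y$ is no longer a multiple of $e_\ell$, and the current $R$ is a general upper triangular unitary-plus-rank-one matrix whose off-spike diagonal entries are certainly not all $1$. So "running the static argument on whatever the current factors happen to be" is not legitimate; your proof of the equivalence only covers the initial, unmodified representation, which is the one case where nothing needs proving.

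The paper's argument avoids this entirely and you could repair yours along the same lines: use only (a) that $H=\mathcal{B}+e_1y^T$ is upper Hessenberg with subdiagonal entries $h_{j+1,j}$ equal to the subdiagonal entries of the individual $B_j$ (the rank-one term touches only row $1$), (b) that $\mathcal{C}$ is \emph{properly} upper Hessenberg because all $C_i$ are nontrivial, and (c) the invariant that the last row of $\underline{R}$ is zero (core transformations never act on row $n+1$, and $y$ is reconstructed so that this row vanishes). Partitioning $H=\mathcal{C}\,\underline{R}$ so as to drop the first row of $H$ and $\mathcal{C}$ and the last row/column of $\underline{R}$ yields $\tilde{H}=\tilde{C}R$ with $\tilde{H}$ upper triangular and $\tilde{C}$ upper triangular and nonsingular; hence $R=\tilde{C}^{-1}\tilde{H}$ is upper triangular, and comparing the $(\ell+1,\ell)$ entries gives $h_{\ell+1,\ell}=c_{\ell+1,\ell}\,r_{\ell\ell}$ with $c_{\ell+1,\ell}\neq0$, so $r_{\ell\ell}=0$ iff $B_\ell$ is trivial. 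Note this works for arbitrary $\mathcal{B}$ and $y$ compatible with the zero bottom row, which is exactly the generality the algorithm requires. (Your first part, deferring upper triangularity to the cited theorem, is fine in spirit, and your closing computation of $e_1^{T}\mathcal{C}e_\ell$ is correct as far as it goes; also note the small slip where you wrote "first component of $\mathcal{C}^*e_1$" instead of its $\ell$th component.)
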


\begin{proof}
Originally we started out with 
\begin{equation}\label{rform}
 \underline{R} = \left[\begin{array}{cc}
R & \times \\ 0 & 0
\end{array}\right]
\end{equation}
where the symbol $\times$ represents a vector that is not of immediate interest.  
The bottom row of $\underline{R}$ is zero initially and it remains zero forever.  This is so because the core
transformations that are passed into and out of $R$ act on rows and columns $1$, \ldots, $n$;
they do not alter row $n+1$.     
Letting $H = B + e_{1}y^{T}$, we have $\underline{R} = \mathcal{C}^{*}H$, or equivalently 
$H = \mathcal{C}\underline{R}$.  Partition this equation as 
\begin{equation}\label{eq:hcrpart}
\left[\begin{array}{cc}
\times & \times \\  \tilde{H} & \times 
\end{array}\right] =
\left[\begin{array}{cc}
\times & \times \\  \tilde{C} & \times 
\end{array}\right] 
 \left[\begin{array}{cc}
R & \times \\ 0 & 0
\end{array}\right],
\end{equation}
where $\tilde{H}$ and $\tilde{C}$ are $n \times n$ and the vectors marked $\times$ are not of
immediate interest.  We deduce that 
$$\tilde{H} = \tilde{C}R.$$
$H$ is upper Hessenberg, so $\tilde{H}$ is upper triangular.  
Since all $C_{i}$ are nontrivial, $\mathcal{C}$ is a proper
upper Hessenberg matrix ($c_{i+1,i} \neq 0$, $i=1,\ldots,n$), so 
 $\tilde{C}$ is upper triangular and nonsingular.  We have
\begin{equation}\label{rstaystriangular}
R = \tilde{C}^{-1}\tilde{H}.
\end{equation} 
so $R$ must be upper triangular.  Now looking at the main diagonal  of the equation
$\tilde{H} = \tilde{C}R$, we find that $h_{\ell+1,\ell} = c_{\ell+1,\ell}\, r_{\ell,\ell}$.   
Since $c_{\ell+1,\ell} \neq 0$ we see that $r_{\ell,\ell} = 0$ if and only if $h_{\ell+1,\ell} = 0$,
and this happens if and only if $B_{\ell}$ is trivial.  
\end{proof} 

The existence of trivial core transformations in the factors presents no difficulties for the
reduction to Hessenberg-triangular form.  In some cases it will result in trivial core transformations
being chased forward, but this does no harm.  Now let us consider what happens in the iterative phase
of the procedure.  

\subsection{Infinite eigenvalues}

Behavior of infinite eigenvalues under Francis iterations is discussed in \cite{b333}.  There
it is shown that a zero on the main diagonal of $T$ gets moved up by one position on each
iteration.   Let's see how this manifests itself in our structured case.   Consider an example
of a singular $T_{i}$ with a trivial core transformation  in the third position, $B_{3} = I$, as shown
pictorially below.    
Since $T_{i}$
is in the ``inverted'' part, core transformations pass through it from left to right.   
Suppose we pass $G_{2}$ through $T_{i}$, transforming $G_{2}T_{i}$ to $\tilde{T}_{i}\tilde{G}_{2}$.  
The first turnover is routine, but in the second
turnover there is a trivial factor:  $F_{3}B_{2}B_{3} = F_{3}B_{2}I$.  This turnover is thus
trivial: $F_{3}B_{2}I = IF_{3}B_{2}$.  $I$ becomes the new  $B_{2}$.  The old $B_{2}$ is pushed
out of the sequence to become $\tilde{G}_{2}$.  Pictorially
\begin{equation*}
    \begin{tikzpicture}[baseline={(current bounding box.center)},scale=1.66,y=-1cm]




     \node at (4.0,0.2) [above] {$G_2$};
     \tikzrotation[black][][]{4.0}{0.2};      
     \turnoverlr{4.0}{0.2}{4.6}{0.4}
     \tikzrotation[black][][]{4.6}{0.4};      
     \shiftthroughlr{4.6}{5.0}{0.4};
     \tikzrotation[black][][]{5.0}{0.4};      

     \node[above] at (5.23,.2) {$B_2$};
     \node[below] at (5.0,.64) {$F_3$};

    \foreach \j in {0,1}{
          \tikzrotation{\j/5+5.0}{\j/5}
    }      

    \foreach \j in {3,4}{
          \tikzrotation{\j/5+5.0}{\j/5}
    }

    \foreach \j in {0,1,...,4}{
          \tikzrotation{\j/5+3.8}{-\j/5+.8}
    }

    \begin{scope}[xshift=4cm]
      \node at (6.0,0.2) [above] {$\tilde G_2$}; 
      \tikzrotation[black][][]{5.6}{0.2};
      \tikzrotation[black][][]{6.0}{0.2};
     \node[above] at (5.63,.2) {$B_2$};
      \shiftthroughlr{5.6}{6.0}{0.2}; 

      \foreach \j in {0}{ \tikzrotation{\j/5+5.0}{\j/5} }

      \foreach \j in {2,3,4}{ \tikzrotation{\j/5+5.0}{\j/5} }

      \foreach \j in {0,1,...,4}{ \tikzrotation{\j/5+3.8}{-\j/5+.8}};
\node[above] at (6.4,.7) {.};
    \end{scope}





      \node[above] at (6.8,.7) {becomes};
    \end{tikzpicture}
\end{equation*}
The trivial core transformation in $\mathcal B$ has moved up one position. 

On each iteration it moves up one position  until it gets to the top.  
At that point an infinite eigenvalue can be deflated at the top.  
The deflation happens automatically;  no special action is necessary.  

\subsection{Zero eigenvalues}

In the case of a zero eigenvalue, one of the $R_{i}$ factors has a trivial
core transformation, for example, $B_{3}$.   During the iterations, core
transformations pass through $R_{i}$ from right to left.  One might hope
that the trivial core transformation gets pushed downward by one position
on each iteration, eventually resulting in a deflation at the bottom.  Unfortunately
this is not what happens.  When a transformation $G_{2}$ is pushed into 
$R_{i}$ from the right, the first turnover is trivial:  $B_{2} I G_{2} = I \tilde{B}_{2} I$, 
where $\tilde{B}_{2} = B_{2}G_{2}$.  A trivial core transformation is ejected
on the left, and the iteration dies.

It turns out that this problem is not caused by the special structure of our factors
but by the fact that we are storing the matrix in $QR$-decomposed form.  There
is a simple general remedy.   Consider a singular upper-Hessenberg $A$ with no special 
structure, stored in the form $A = Q_{1} \cdots Q_{n-1}R$.   If $A$ is
singular, then so is $R$, so $r_{ii}=0$ for some $i$.  The reader can easily check that if $r_{ii} = 0$ for $i<n$,
then $a_{i+1,i} = 0$, so $A$ is not properly upper Hessenberg, and a deflation should be possible.  We will
show how to do this below, but first consider the case when $r_{nn}=0$.  
We have 
\begin{displaymath}
A \ = \ QR \ = \ \parbox{4.7cm}{
\begin{tikzpicture}[scale=1.66,y=-1cm]
\phantom{\tikzrotation{0}{0}}
\foreach \j in {0,...,4}{\tikzrotation{\j/5-1.25}{\j/5}}
\draw (-.15,-.1) -- (-.2,-.1) -- (-.2,1.1) -- (-.15,1.1);
\draw (1.15,-.1) -- (1.2,-.1) -- (1.2,1.1) -- (1.15,1.1);
\foreach \j in {0,...,4}{
   \foreach \i in {\j,...,5}{\node at (\i/5,\j/5)
     [align=center,scale=1.0]{$\times$};}}
\end{tikzpicture}
},
\end{displaymath}
where $R$ has a zero at the bottom.   Do a similarity transformation that moves
the entire matrix $Q$ to the right.  Then pass the core transformations back through $R$, and notice
that when $Q_{n-1}$ is multiplied into $R$, it acts on columns $n-1$ and $n$ and does not create 
a bulge.  Thus nothing comes out on the left.  Or, if you prefer, we can say that $\hat{Q}_{n-1}=I$ comes out
on the left:
\begin{displaymath}
\parbox{4.4cm}{
\begin{tikzpicture}[scale=1.66,y=-1cm]
\draw (-.15,-.1) -- (-.2,-.1) -- (-.2,1.1) -- (-.15,1.1);
\draw (1.15,-.1) -- (1.2,-.1) -- (1.2,1.1) -- (1.15,1.1);
\foreach \j in {0,...,4}{
   \foreach \i in {\j,...,5}{\node at (\i/5,\j/5)
     [align=center,scale=1.0]{$\times$};}}
\foreach \j in {0,...,4}{\tikzrotation{\j/5+1.45}{\j/5}} 
\phantom{\tikzrotation{.5}{.8}}
\end{tikzpicture}
} \ = \ \parbox{4.7cm}{
\begin{tikzpicture}[scale=1.66,y=-1cm]
\foreach \j in {0,...,3}{\tikzrotation{\j/5-1.25}{\j/5}}
\phantom{\tikzrotation{-.45}{.8}}
\draw (-.15,-.1) -- (-.2,-.1) -- (-.2,1.1) -- (-.15,1.1);
\draw (1.15,-.1) -- (1.2,-.1) -- (1.2,1.1) -- (1.15,1.1);
\foreach \j in {0,...,4}{
   \foreach \i in {\j,...,5}{\node at (\i/5,\j/5)
     [align=center,scale=1.0]{$\times$};}}
\end{tikzpicture}
}.
\end{displaymath}
Now we can deflate out the zero eigenvalue.

It is easy to relate what we have done here to established theory.  It is well known \cite{b333} that
if there is a zero eigenvalue, one step of the explicit QR algorithm with zero shift ($A = QR$, $RQ= \hat{A}$) 
will extract it.  This is exactly what we have done here.  One can equally well check that if one does a single Francis  
iteration with shift  $\rho = 0$, this is exactly what results.  

Now consider the case $r_{ii}=0$, $i<n$.  Depicting the case $i=3$ we have 
\begin{displaymath}
A \ = \ QR \ = \ \parbox{4.7cm}{
\begin{tikzpicture}[scale=1.66,y=-1cm]
\phantom{\tikzrotation{0}{0}}
\foreach \j in {0,...,4}{\tikzrotation{\j/5-1.25}{\j/5}}
\draw (-.15,-.1) -- (-.2,-.1) -- (-.2,1.1) -- (-.15,1.1);
\draw (1.15,-.1) -- (1.2,-.1) -- (1.2,1.1) -- (1.15,1.1);
\foreach \j in {0,...,4}{
   \foreach \i in {\j,...,4}{\node at (\i/5+.2,\j/5)
     [align=center,scale=1.0]{$\times$};}}
\foreach \j in {0,...,1}{\node at (\j/5,\j/5) [align=center,scale=1.0]{$\times$};}
\foreach \j in {3,...,5}{\node at (\j/5,\j/5) [align=center,scale=1.0]{$\times$};}
\end{tikzpicture}
}.
\end{displaymath}
Pass $Q_{i+1}$, \ldots, $Q_{n-1}$ from left to right through $R$ to get them out of
the way:
\begin{displaymath}
\parbox{5.3cm}{
\begin{tikzpicture}[scale=1.66,y=-1cm]
\phantom{\tikzrotation{0}{0}}
\foreach \j in {0,...,2}{\tikzrotation{\j/5-1.25}{\j/5}}
\foreach \j in {3,4}{\tikzrotation[gray]{\j/5-1.25}{\j/5}}
\foreach \j in {3,4}{\shiftthroughlr[gray]{\j/5-1.25}{\j/5+.9}{\j/5}}
\foreach \j in {3,4}{\tikzrotation[black]{\j/5+.9}{\j/5}}
\draw (-.15,-.1) -- (-.2,-.1) -- (-.2,1.1) -- (-.15,1.1);
\draw (1.15,-.1) -- (1.2,-.1) -- (1.2,1.1) -- (1.15,1.1);
\foreach \j in {0,...,4}{
   \foreach \i in {\j,...,4}{\node at (\i/5+.2,\j/5)
     [align=center,scale=1.0]{$\times$};}}
\foreach \j in {0,...,1}{\node at (\j/5,\j/5) [align=center,scale=1.0]{$\times$};}
\foreach \j in {3,...,5}{\node at (\j/5,\j/5) [align=center,scale=1.0]{$\times$};}
\end{tikzpicture}
}.
\end{displaymath}
Since these do not touch row or column $i$, the zero at $r_{ii}$ is preserved.  
Now the way is clear to multiply $Q_{i}$ into $R$ without creating a bulge.  
This gets rid of $Q_{i}$.  Now the core transformations that were passed through $R$
can be returned to their initial positions either by passing them back through $R$ or 
by doing a similarity transformation.  Either way the result is 
\begin{displaymath}
\parbox{4.7cm}{
\begin{tikzpicture}[scale=1.66,y=-1cm]
\phantom{\tikzrotation{0}{0}}
\foreach \j in {0,...,1}{\tikzrotation{\j/5-1.25}{\j/5}}
\foreach \j in {3,...,4}{\tikzrotation{\j/5-1.25}{\j/5}}
\draw (-.15,-.1) -- (-.2,-.1) -- (-.2,1.1) -- (-.15,1.1);
\draw (1.15,-.1) -- (1.2,-.1) -- (1.2,1.1) -- (1.15,1.1);
\foreach \j in {0,...,4}{
   \foreach \i in {\j,...,4}{\node at (\i/5+.2,\j/5)
     [align=center,scale=1.0]{$\times$};}}
\foreach \j in {0,...,1}{\node at (\j/5,\j/5) [align=center,scale=1.0]{$\times$};}
\foreach \j in {3,...,5}{\node at (\j/5,\j/5) [align=center,scale=1.0]{$\times$};}
\end{tikzpicture}
},
\end{displaymath}
or more compactly
\begin{displaymath}
\parbox{3.8cm}{
\begin{tikzpicture}[scale=1.66,y=-1cm]
\phantom{\tikzrotation{0}{0}}
\foreach \j in {0,...,1}{\tikzrotation{\j/5-.65}{\j/5}}
\foreach \j in {3,...,4}{\tikzrotation{\j/5-1.25}{\j/5}}
\draw (-.15,-.1) -- (-.2,-.1) -- (-.2,1.1) -- (-.15,1.1);
\draw (1.15,-.1) -- (1.2,-.1) -- (1.2,1.1) -- (1.15,1.1);
\foreach \j in {0,...,4}{
   \foreach \i in {\j,...,4}{\node at (\i/5+.2,\j/5)
     [align=center,scale=1.0]{$\times$};}}
\foreach \j in {0,...,1}{\node at (\j/5,\j/5) [align=center,scale=1.0]{$\times$};}
\foreach \j in {3,...,5}{\node at (\j/5,\j/5) [align=center,scale=1.0]{$\times$};}
\end{tikzpicture}
}.
\end{displaymath}
 The eigenvalue problem has been decoupled into two smaller eigenvalue problems,
 one of size $i \times i$ at the top and one of size $(n-i) \times (n-i)$ at the bottom.  
 The upper problem has a zero eigenvalue, which can be deflated immediately by 
 doing a similarity transformation with  
 $Q_{1} \cdots Q_{i-1}$ and passing those core transformations back through $R$ 
 as explained earlier.  The result is  
 \begin{displaymath}
\parbox{3.8cm}{
\begin{tikzpicture}[scale=1.66,y=-1cm]
\phantom{\tikzrotation{0}{0}}
\foreach \j in {0,...,0}{\tikzrotation{\j/5-.65}{\j/5}}
\foreach \j in {3,...,4}{\tikzrotation{\j/5-1.25}{\j/5}}
\draw (-.15,-.1) -- (-.2,-.1) -- (-.2,1.1) -- (-.15,1.1);
\draw (1.15,-.1) -- (1.2,-.1) -- (1.2,1.1) -- (1.15,1.1);
\foreach \j in {0,...,4}{
   \foreach \i in {\j,...,4}{\node at (\i/5+.2,\j/5)
     [align=center,scale=1.0]{$\times$};}}
\foreach \j in {0,...,1}{\node at (\j/5,\j/5) [align=center,scale=1.0]{$\times$};}
\foreach \j in {3,...,5}{\node at (\j/5,\j/5) [align=center,scale=1.0]{$\times$};}
\end{tikzpicture}
}.
\end{displaymath}
We now have an eigenvalue problem of size $(i-1) \times (i-1)$ at the top, a deflated zero
eigenvalue in position $(i,i)$, and an eigenvalue problem of size $(n-i) \times (n-i)$ at the
bottom.

We have described the deflation procedure in the unstructured case, but it can be 
applied equally well in the structured context of this paper.  Instead of a simple upper-triangular
$R$, we have a more complicated $RT^{-1}$, which is itself a product of many factors.  
The implementation details are different, but the procedure is the same.  

\section{Computation of eigenvectors}
\label{sec:eigenvectors}

In practice typically only the left or the right eigenvectors are required. We will
compute left eigenvectors since these ones are easier to retrieve than the right ones.
 If the right eigenvectors are wanted, one can compute the left ones of
 $P(\lambda)^T$.

For $w$ as a left eigenvector corresponding to the eigenvalue $\lambda$, i.e., $w^T
P(\lambda)=0$, we get that      
\[
\hat w =
    \begin{bmatrix}
      w \\
      \lambda w \\
      \vdots \\
      \lambda^{d-1} w \\
    \end{bmatrix}, 
\]
will be a left eigenvector of the companion pencil \eqref{eq:comp_pencil}. This implies that once an
eigenvector $\hat w$ of the companion pencil is computed, the first
$k$ elements of that vector define the eigenvector $w$ of the matrix polynomial. To save
storage and computational cost it suffices thus to compute only the first $k$ elements of each
eigenvector. For reasons of numerical stability, however, we will also compute the last
$k$ elements of $\hat w$ and use its top $k$ elements if $|\lambda|\leq 1$ and its trailing $k$ elements if
$|\lambda|>1$  to define $w$.

Suppose our algorithm has run to completion and we have ended up with the Schur form $U^* (S
 -\lambda  T )V = \hat{S} -\lambda \hat{T}$, where both $\hat{S}$ and $\hat{T}$ are
upper triangular. The left eigenvector, corresponding to the eigenvalue $\hat \lambda$
found in the lower right corner of $\hat{S}-\lambda \hat{T}$ equals $\hat{w}=U e_n$. But
since    the top or bottom $k$ elements of $\hat{w}$ suffice to retrieve $w$ we only need
to store the top $k$ and bottom $k$ rows of $U$. Let $P$ be of size $2k \times dk$ 
\begin{eqnarray*}
  P=
\left[
\begin{array}{ccccc}
I_k & 0 & \ldots & 0 & 0 \\
0 & 0 & \ldots & 0 & I_k
\end{array}
\right],
\end{eqnarray*}
then we see that $PUe_n$ provides us all essential information. As the matrix $U^*$ is an
accumulation of all core transformations applied to the left of $S$ and $T$
during the algorithm we can save computations  by forming
$PU$ directly instead of $U$. 

Let us estimate the cost of forming $PU$. Applying a single core transformation from the right to a
$2k \times dk$ matrix costs $\mathcal{O}(k)$ operations. Each similarity transformation with 
 a core transformation requires us to update $PU$;  it remains thus to count the total
number of similarities executed. 
In the initial reduction procedure to Hessenberg-triangular form we need
at most $d$ 
similarities to remove a single core
transformation. 
As roughly $dk$ core transformations need to be removed, we end up with
$\mathcal{O}(d^2k)$ similarities.
Under the assumption that a $\mathcal{O}(1)$ of QZ steps are required to get convergence to a
single eigenvalue we have $\mathcal{O}(dk)$ similarities for a single eigenvalue, in total
this amounts  $\mathcal{O}(d^2k^2)$ similarities for all eigenvalues.  
In total forming the matrix $PU$ has a complexity of $\mathcal{O}(d^2k^3)$.

Unfortunately the Schur form allows us only to compute the left eigenvector corresponding
to the eigenvalue found in the lower right corner. To compute other eigenvectors we need to
reorder the Schur form so that each corresponding eigenvalue appears once at the bottom, 
after which we
can extract the corresponding eigenvector. To reorder the Schur pencil we can rely on
classical reordering methods \cite{b333}. In our setting we will only swap two eigenvalues
at once and we will use core transformations to do so. To compute the core transformation
that swaps two eigenvalues in the Schur pencil, we need the diagonal and superdiagonal
elements of $\hat{S}$ and $\hat{T}$, these are obtained by computing the diagonal and
superdiagonal elements of each of the involved factors. We refer to Aurentz et al.
\cite{AuMaVaWa15} for details on computing diagonal and superdiagonal elements of a
properly stored unitary-plus-rank-one matrix. After this core transformation is computed
we apply it the Schur pencil and update all involved factors by chasing the core
transformation through the entire sequence. After the core transformation has reached the
other end it is accumulated in $PU$.

If all eigenvectors are required, we need quite some swaps and updating. Let us estimate
the cost. Bringing the eigenvalue in the bottom right position to the upper left top
thereby moving down all other eigenvalues a single time requires $dk-1$ swaps. Doing this
$dk-1$ times makes sure that each eigenvalue has reached the bottom right corner once,
enabling us to extract the corresponding eigenvector. In total $\mathcal{O}(d^2k^2)$ swaps
are thus sufficient to get all eigenvectors.  A single swap requires updating the $2k$
upper triangular factors involving $4k$ turnovers.  Also $PU$ needs to be updated and this
takes $\mathcal{O}(k)$ operations as well. In total this leads to an overall complexity of
$\mathcal{O}(d^2k^3)$ for computing all the eigenvectors of the matrix polynomial.

\section{Backward stability}
\label{sec:backward}
The algorithm consists of three main steps: a preprocessing of the
matrix coefficients, the reduction to Hessenberg-triangular
form, and the actual eigenvalue computations. 
We will quantify how the backward errors can accumulate in all steps. The second
and third step are dealt with simultaneously. In this section we use $\doteq$
to denote an equality where some  second or higher order terms have dropped, $\delta X$
will denote a perturbation of $X$, and $\lesssim$ stands for less than, up to
multiplication with 
polynomial in $d$ and $k$ of modest degree. For simplicity we assume the norms to be
unitarily invariant.
We make use of the Frobenius norm, but will denote it simply as $\| \cdot \|$ without
subscripted F.

The preprocessing step brings the matrix
polynomial's leading and trailing coefficients to triangular
form and all other polynomial coefficients are transformed via a unitary
equivalence $\tilde{P}_i=U^* P_i V$.  In floating point arithmetic, however, we get
$\hat{P}_i$. Relying on the backward stability of the QZ algorithm 
and since both $U$ and $V$ are unitary we get that
$\hat{P}_i=U^*(P_i+\delta P_i)V$, where
$\|\delta P_i\|/\|P_i\| \lesssim u$, where $u$ denotes the unit round-off \cite{b029}. 
Factoring the pencil matrices is free of errors and provides us unitary-plus-spike matrices

In Section~\ref{be:ups} we analyze unitary-plus-spike matrices and see that we end up with
a highly structured backward error. In Sections~\ref{be:gauss} and \ref{be:frobenius}
we consider the combined error of the reduction and eigenvalue computations for the Frobenius and Gaussian factorizations.
We prove and show in the
numerical experiments, Section~\ref{sec:numexp}, that both factorizations have the
error bounded by the same order of magnitude, but the
Gaussian
factorization has a smaller constant.
In Section~\ref{be:summary} we conclude by formulating generic perturbation theorems
pushing the error back on the pencil and on the matrix polynomial.
We show that with an appropriately scaling  we end up with a backward stable algorithm.

\subsection{Perturbation results for unitary-plus-spike matrices}
\label{be:ups}
We first state a generic perturbation theorem for upper triangular unitary-plus-spike
matrices.  This theorem is more detailed compared to our previous error bound \cite{AuMaVaWa15}.
The precise location of the error is essential in properly accumulating
the errors of the various factor matrices in Theorems~\ref{thm:gaussbackwarderror} and
\ref{theo:be:frobenius}. We show that the backward error on an upper triangular
unitary-plus-spike matrix that has undergone some pass-through operations is distributed
non-smoothly. Suppose the spike has zeros in the last $\ell$ spots. Then the associated
upper triangular matrix will have its lower $\ell$ rows perturbed only by a modest
multiple of the machine precision.  The other upper rows will incorporate a larger error
depending on $\|R\|$, and the upper part of the spike absorbs the largest error being proportional to $\|R\|^2$.

\begin{theorem} \label{lem:Rperturbation}
  Let $R$ be an $n\times n$ upper triangular identity-plus-spike matrix\footnote{In the
setting of the paper $n=dk$, but the theorem holds for any $n$. In the remainder we identify $n$ with $dk$.} factored as
  \[
    R = P^T  \mathcal C^* (\mathcal B + e_1 \underline{y}^T)  P = I_n + (x - e_{\ell}) y^T, \qquad
    \norm{C^* e_1} = \norm{\underline{x}} = 1,
  \]
with the notational conventions on $x$, $y$, $\underline{x}$, and $\underline{y}$ and the
rank-one part implicitly encoded in the unitary part as in Section~\ref{sec:storage}.
  Let $U$ and $V$ be unitary matrices representing the action of several pass-through
  operations through $R$ such that the resulting $\tilde R = U^* R V$ is  an upper triangular
  unitary-plus-rank-one matrix. 
If $\hat R$ is the result obtained computing $\tilde R$ in floating point, operating on
the unitary part only (see Section~\ref{sec:oper}) and reconstructing the rank-one part
from the unitary part,  we get 
  \[
    \hat R = U^* (R+\delta R) V= U^* (R_u + \delta R_u +R_o +\delta R_o) V, 
  \]
  where $R_u= P^T  \mathcal C^* \mathcal B  P$ and $R_o= P^T  \mathcal C^*  e_1
  \underline{y}^T  P$, and   $\delta R_u$ stands for the error in the unitary and
  $\delta R_o$ in the rank-one part.  The error in the unitary part $\norm{\delta R_u} \lesssim u$, and the error in the
  rank-one part 
  \[
    \delta R_o \doteq - \delta \rho_r\, x y^T + \delta w_1\, y^T + x\, \delta w_2^T,
  \]
  with $\norm{\delta w_1} \lesssim u$, $\norm{\delta w_2} \lesssim \norm{\underline{y}}  u$,
  and $|\delta \rho_r| \lesssim \norm{\underline{y}} u$. 
\end{theorem}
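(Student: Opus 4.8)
The plan is to separate the two sources of backward error: the error incurred while manipulating the compactly stored unitary factor $\mathcal{C}^*\mathcal{B}$, and the error incurred when the rank-one part is reconstructed from that (now perturbed) unitary factor. I would treat them in that order.

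For the unitary part the reasoning is essentially standard. Take $U$ and $V$ to be the products of the \emph{exact} core transformations generated during the pass-throughs; each pass-through touches the stored unitary factor only, through a bounded number of turnovers and fusions (Section~\ref{sec:oper}), and these elementary operations are backward stable with a perturbation of norm $\lesssim u$ on the core transformations involved \cite{AuMaVaWa15}. The number of such operations is a polynomial in $d$ and $k$, and the computed, nearly unitary core transformations may be replaced by exactly unitary ones at the price of a further $\lesssim u$ perturbation; absorbing everything gives $\widehat{\mathcal{C}}^*\widehat{\mathcal{B}} = \underline{U}^*(\mathcal{C}^*\mathcal{B} + \delta W)\underline{V}$ with $\norm{\delta W}\lesssim u$, where the $(n+1)$-dimensional embeddings $\underline{U}$, $\underline{V}$ (appending a trailing unit) both fix $e_{n+1}$. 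Since then $P^T\underline{U}^* = U^*P^T$ and $\underline{V}P = PV$, restricting with $P$ and $P^T$ yields $\hat{R}_u = U^*(R_u + \delta R_u)V$ with $\delta R_u = P^T\delta W P$, $\norm{\delta R_u}\lesssim u$.

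The rank-one part is the delicate step. Because every core transformation passed in or out fixes $e_{n+1}$, the bottom row of the embedded matrix stays identically zero, so at every stage $\underline{R} = \mathcal{C}^*\mathcal{B} + \mathcal{C}^*e_1\underline{y}^T$ satisfies $e_{n+1}^T\underline{R}=0$; reading off this last row gives a recovery relation of the form $\underline{y}^T = -(e_{n+1}^T\mathcal{C}^*e_1)^{-1}\,e_{n+1}^T\mathcal{C}^*\mathcal{B}$, which is how $\underline{y}$ is obtained from the unitary data. The scalar $e_{n+1}^T\mathcal{C}^*e_1$ is, up to a unimodular factor, the last entry of $\underline{x}$, of modulus $\approx\norm{\underline{y}}^{-1}$ (and $\norm{\underline{y}}\lesssim\norm{R}$); this is exactly the condition number of the recovery, and it is left invariant by the pass-throughs, since they fix $e_{n+1}$ and preserve $\norm{\underline{y}}$. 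Plugging the perturbed data $\widehat{\mathcal{C}}$, $\widehat{\mathcal{B}}$ into the relation (the rounding of that evaluation being of the same order) and linearizing, the error in $\hat{\underline{y}}$ splits as $-\delta\rho_r\,\underline{y} + \delta_2$, where $\delta\rho_r$ comes from the relative error in the ill-conditioned scalar $e_{n+1}^T\mathcal{C}^*e_1$ and hence $|\delta\rho_r|\lesssim\norm{\underline{y}}u$, while $\norm{\delta_2}\lesssim\norm{\underline{y}}u$; the independent $\lesssim u$ error in the transported spike direction $\mathcal{C}^*e_1$ contributes a further term $\delta_1$ with $\norm{\delta_1}\lesssim u$. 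Forming the outer product $\widehat{\mathcal{C}}^*e_1\,\hat{\underline{y}}^T$, restricting with $P$ and $P^T$, dropping the second-order term $\delta_1\delta_2^T$, and writing $x$, $y$ for the (restricted, transported) spike and $\underline{y}$, one arrives at $\delta R_o \doteq -\delta\rho_r\,xy^T + \delta w_1\,y^T + x\,\delta w_2^T$ with the asserted bounds; since $x$ vanishes below the support of the spike, this also exhibits the claimed near-machine-precision perturbation of the lower rows.

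I expect the real obstacle to lie in the bookkeeping of this reconstruction rather than in accumulating rounding errors: one must check that the conditioning $\norm{\underline{y}}\lesssim\norm{R}$ enters the spike direction and the direction of $\underline{y}$ only once, but enters quadratically only through the single scalar $\delta\rho_r$ (via the term $\delta\rho_r\,xy^T$), that the spike and $\underline{y}$ are genuinely transported as asserted so that the names $x$, $y$ may be reused after the pass-throughs, and that the trivial core transformations $F_{\ell+1},\dots,F_n$ survive the pass-throughs — this last point, which rests on Theorem~\ref{thm:rtriangular} together with the invariances recorded from \cite[Thms~4.2, 4.3, and~4.7]{AuMaVaWa15}, is what confines the amplified error to the upper rows and the upper part of the spike.
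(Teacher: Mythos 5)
Your proposal is correct and follows essentially the same route as the paper: perturb the two stored unitary factors by $\mathcal{O}(u)$, exploit that the embedded transformations fix $e_1$ and $e_{n+1}$ so that $\rho=e_{n+1}^T\mathcal C^*e_1$ (hence $\norm{\underline y}=|\rho|^{-1}$) is invariant under the pass-throughs, and linearize the recovery formula $\underline y^T=-\rho^{-1}e_{n+1}^T\mathcal C^*\mathcal B$ to obtain exactly the three rank-one error terms with the stated bounds. The only cosmetic difference is that the paper keeps the perturbations $\delta\mathcal C$ and $\delta\mathcal B$ separate throughout rather than merging them into a single $\delta W$ on the product, and it does not need Theorem~\ref{thm:rtriangular} for this result.
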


\begin{proof}
The rank-one part of $R$ is stored implicitly
in the unitary matrices so, in practice, the equivalence
transformation $U^* RV$ only manipulates the unitary part $\CS{C}^*\CS{B}$. So we execute $\underline{U}^*
\CS{C}^*\CS{B} \underline{V}$ where, following our notational convention,
$\underline{U}
=\left[
\begin{smallmatrix}
U & 0 \\
0 & 1
\end{smallmatrix}
\right]$ and
$\underline{V}
=\left[
\begin{smallmatrix}
V & 0 \\
0 & 1
\end{smallmatrix}
\right]$. 
This operation is implemented as
$
\underline{U}^* \CS{C}^* W \; W^*  \CS{B} \underline{V}$, first passing core transformations through
$\CS{B}$ and then through $\CS{C}$.  The resulting upper triangular $\tilde R$ can
therefore be factored as  $$
    \tilde R = P^T \underline{\tilde{R}} P = P^T \tilde{\mathcal C}^* (\tilde{\mathcal B}
    + e_1 \tilde{\underline{y}}^T) P,$$ where
\begin{eqnarray*}
\tilde {\mathcal C}   =  W^* {\mathcal C} \underline{U},  & \qquad &
    \tilde {\mathcal B}  =   W^* {\mathcal B} \underline{V}.
\end{eqnarray*}
Letting $\tilde \rho = (e_{n+1}^T \tilde {\mathcal C}^* e_1)$, the vector $\tilde{y}$
is given by
\begin{eqnarray*}
\underline{\tilde y}^T  & = &  - (e_{n+1}^T \tilde {\mathcal C}^* e_1)^{-1}
  (e_{n+1}^T \tilde {\mathcal C}^* \tilde {\mathcal B})
  = - \tilde{\rho}^{-1}
  (e_{n+1}^T \tilde {\mathcal C}^* \tilde {\mathcal B}).
\end{eqnarray*} 
Since $W e_1 = e_1$ and $\underline{U} e_{n+1}=e_{n+1}$ 
we have that 
$\tilde{\rho}=  e_{n+1}^T {\tilde{\CS{C}}}^* e_1 = e_{n+1}^T {\CS{C}}^* e_1 = \rho$, which in turn is the final element of $\underline{x}$.  
 
In floating point, however, we end up with $\hat{R}$ rebuilt from
the computed $\hat{\mathcal C}$ and $\hat{\mathcal B}$, which are the perturbed version of $\tilde{\mathcal
  C}$ and $\tilde{\mathcal B}$, by using
 $$
    \hat R = P^T \underline{\hat R} P = P^T \hat{\mathcal C}^* (\hat{\mathcal B} + e_1
    \hat{\underline{y}}^T) P, 
  $$
where, for $\hat\rho = (e_{n+1}^T \hat {\mathcal C}^* e_1)$, the vector $\hat{y}$ is
computed from
$$
  \underline{\hat y}^T   =  - (e_{n+1}^T \hat {\mathcal C}^* e_1)^{-1}
  (e_{n+1}^T \hat {\mathcal C}^* \hat {\mathcal B})
=       - \hat{\rho}^{-1}
  (e_{n+1}^T \hat {\mathcal C}^* \hat {\mathcal B}).
$$

The matrices $\hat{\mathcal C}$ and $\hat{\mathcal B}$ are the result of executing
turnovers on $\mathcal C$ and $\mathcal B$.
Each turnover introduces
a small backward error  of the order $u$ \cite{AuMaVaWa15} and
we have, for $\norm{\delta \CS{C}}\lesssim
u$, $\norm{\delta \CS{B}} \lesssim u$, the relations
$    \hat {\mathcal C} = W^* ({\mathcal C} + \delta {\mathcal C}) \underline{U}$ and
$     \hat {\mathcal B} = W^* ({\mathcal B} + \delta {\mathcal B})\underline{V}$. All the individual
perturbations are accumulated in $\delta
{\mathcal C}$ and $\delta{\mathcal B}$, which could be dense, unstructured, in general.
 It's
 worth noting, however, that the implementation ensures that $\hat{\mathcal C}$ as
 well as $\hat{\mathcal B}$ remain unitary.

Let us focus now on the factor $\hat \rho$.
 In floating point we compute $\hat \rho =  \rho + \delta \rho_a =
 \rho (1 + \delta \rho_r)$ (subscripts $a$ and $r$ denoting  the
absolute and relative error respectively).
Given that $|\delta \rho_a| \lesssim u$ we have $|\delta \rho_r| \lesssim  |\rho|^{-1} u$,
which can be bounded by 
 $|\delta {\rho_r}| \lesssim \norm{\underline{y}} u$. Under the assumption that $\delta \rho_r$ is tiny we get
$(1+\delta \rho_r)^{-1}\doteq (1-\delta \rho_r)$.

Combining these relations and using $W e_1 = e_1$ and $\underline{U} e_{n+1}=e_{n+1}$
leads to 
\begin{eqnarray*}
\underline{\hat{R}} & =& 
\underline{U}^* (\CS{C}+\delta \CS{C})^* W\;
\Big(
          W^* (\CS{B}+\delta \CS{B}) \underline{V}
\\ &&\quad + e_1
(-\rho^{-1}) ( 1+\delta\rho_r)^{-1} 
(e_{n+1}^T \underline{U}^* (\CS{C}+\delta \CS{C})^* W \;
          W^* (\CS{B}+\delta \CS{B}) \underline{V})
\Big) \\
& \doteq & \underline{U}^* \Big(
(\CS{C}+\delta\CS{C})^* (\CS{B}+\delta\CS{B})
-
(\CS{C}+\delta{\CS{C}})^* e_1
\rho^{-1} ( 1-\delta\rho_r) 
e_{n+1}^T (\CS{C}+\delta \CS{C})^* 
          (\CS{B}+\delta \CS{B}) 
\Big) \underline{V}.
\end{eqnarray*}
Using $\underline{x}=\CS{C}^* e_1$ and
$\underline{y}^{T}=-\rho^{-1} (e_{n+1}^T \CS{C}^* \CS{B})$ provides the following
approximation of the error
\begin{eqnarray*}
  U (\hat R - \tilde R) V^* &\doteq
  P^T\Big(\delta {\mathcal C}^*\, {\mathcal B} + {\mathcal C}^*\, \delta {\mathcal B}
+ \delta {\mathcal C}^* e_1 \underline{y}^T
- \delta \rho_r \, \underline{x} \underline{y}^T
- \rho^{-1} \underline{x} e_{n+1}^T (\delta {\mathcal C}^*\,\mathcal B + {\mathcal C}^*\, \delta \mathcal B) \Big) P \\
  &= \delta R_u + \delta w_1\, y^T - \delta \rho_r\, xy^T  + x \,\delta w_2^T,
\end{eqnarray*}
where  $\delta R_u = P^T \left(\delta {\mathcal C}^*\, {\mathcal B} + {\mathcal C}^*\, \delta
{\mathcal B}\right) P$. The bounds on the norms of $\delta w_1$ and $\delta w_2$ follow  directly.
\end{proof}

  We can show that
  $\norm{\underline y} \leq \norm{\underline{R}} \leq 1 + \norm{\underline y}$
  and $\norm{R} \leq \norm{\underline R} \leq 1 + \norm{R}$.  Looking at the four terms
  that comprise the backward error, we see that $\|\delta R_{u}\| \lesssim u$, 
  $\norm{\delta w_{1}\,y^{T}} \lesssim \norm{y}u \lesssim \norm{R}u$, 
  $\norm{\delta\rho_{r}xy^{T}} \lesssim \norm{y}^{2}u \lesssim \norm{R}^{2}u$, and 
  $\norm{x\,\delta w_{2}^{T}} \lesssim \norm{y}u \lesssim \norm{R}u$.    
  We can conclude that  $\|\delta R\|\lesssim \|R\|^2 u$. 
  
  Note, however, that only one of the four terms has a $\norm{R}^{2}$ factor, namely
  $\delta\rho_{r}xy^{T}$.  This is a backward error; the rank-one matrix $xy^{T}$ is the initial 
  rank-one matrix, which is in the shape of the initial spike.  
  It follows that the part of the 
  backward error that depends on $\norm{R}^{2}$ is confined to the spike.   The regions
  of dependence can be depicted as follows.  
\begin{equation*}
\begin{tikzpicture}[baseline={(current bounding box.center)},scale=.5,y=1cm]
\draw[fill=black!10!white] (0,0) -- (5,0) -- (5,-5) -- cycle;
\draw[fill=black!30!white] (4.5,0)--(5,0) -- (5,-5) -- (4.5,-4.5) --cycle;
\node[above right] at (1.5,-1.5) {$\delta R_{1}$};

\begin{scope}[xshift=7cm]
\draw[fill=black!10!white] (0,0) -- (5,0) -- (5,-5) -- cycle;
\draw[fill=black!30!white] (4.0,0)--(4.5,0) -- (4.5,-4.5) -- (4.0,-4.0) --cycle;
\draw[fill=white] (4.5,-4.5) -- (5,-4.5) -- (5,-5) -- cycle;
\node[above right] at (1.5,-1.5) {$\delta R_{2}$};
\end{scope}

 \begin{scope}[xshift=14cm]
 \draw[fill=black!10!white] (0,0) -- (5,0) -- (5,-5) -- cycle;
 \draw[fill=black!30!white] (3.5,0)--(4,0) -- (4,-4) -- (3.5,-3.5) --cycle;
 \draw[fill=white] (4.0,-4.0) -- (5,-4.0) -- (5,-5) -- cycle;
 \node[above right] at (1.5,-1.5) {$\delta R_{3}$};
 \end{scope}




\begin{scope}[xshift=21cm]
\node[above right] at (0.5,-1.0) {Perturbation};
\node[above right] at (0.5,-1.9) {depending on};
\draw (0,0.0) rectangle (5.7,-5.2);
\draw[fill=white] (0.5,-2.2) rectangle (1.5,-2.7);
\node[above right] at (1.8,-2.95) {$u$};
\draw[fill=black!10!white] (0.5,-3.2) rectangle (1.5,-3.7);
\node[above right] at (1.8,-4.05) {$\|R\| u$};
\draw[fill=black!30!white] (0.5,-4.2) rectangle (1.5,-4.7);
\node[above right] at (1.8,-5.05) {$\|R\|^2 u$};

\end{scope}

\end{tikzpicture}
\end{equation*}

\subsection{Gaussian factorization}
\label{be:gauss}
We first consider the Gaussian factorization, and we write $S$ 
and $T$ as
\[
  S = Q R_1 \cdots R_k , \mbox{ and }
  T = T_1 \cdots T_k. 
\]
The structure of $T$ is a particular case of the one of $S$ where we
have chosen $Q = I_n$. Besides $Q$ both matrices consist of a product of
unitary-plus-spike matrices, and we will use the perturbation results from Theorem~\ref{lem:Rperturbation}
for each of these factors individually.
For simplicity we perform the backward error analysis
on $S$ only. The same holds unchanged for $T$ by ignoring $Q$.

At the end of the algorithm,
when $S$ has reached upper triangular form, we will have
$\tilde S = U^* S V$. In floating point we compute a matrix
$\hat S$ such that $\hat S = U^* (S + \delta S) V$.
In the Gaussian case we have that, for $j=1,\ldots,k$, 
\begin{equation}
\label{R:gf}
  R_j = I_n + (x_j - e_\ell) y_j^T, \qquad
  y_j = \alpha_j e_{\ell},
\end{equation}
$\ell = n-j+1$.   Only the first $\ell$ elements of $x_j$ can be different from zero. We remark that $x_j$ is
normalized such that its enlarged version has norm one: $\|\underline{x}_j\|=1$.
We are interested in bounding
the norm of $\delta S$. 
A simple lemma is required, before stating the main result of the section.
\begin{lemma}
  \label{lem:rxj}
  For $j = 1, \ldots, k$ and $R_j$ the factors
  of the Gaussian factorization \eqref{R:gf}
 we have that $R_1 \cdots R_{j-1} x_j = x_j$ and  $y_j^T R_{j+1} \cdots R_{k} = y_j^T$. 

\end{lemma}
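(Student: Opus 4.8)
The plan is to exploit two pieces of sparsity: each $y_i$ is a scalar multiple of a single standard basis vector, and each spike vector $x_i$ has its support confined to its first $\ell_i := n - i + 1$ entries (this is exactly the property recorded just after \eqref{R:gf}, inherited from the upper-triangular identity-plus-spike shape \eqref{eq:ri}). Write $\ell := \ell_j = n - j + 1$, so that $R_j = I_n + (x_j - e_\ell)y_j^T$ with $y_j = \alpha_j e_\ell$. The decisive elementary fact is that $i \mapsto \ell_i$ is strictly decreasing: $i < j$ forces $\ell_i > \ell$, and $i > j$ forces $\ell_i < \ell$.

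For the first identity I would first check that $R_i x_j = x_j$ for every $i < j$, and then conclude by telescoping. Expanding,
\[
 R_i x_j = x_j + (x_i - e_{\ell_i})\,(y_i^T x_j) = x_j + \alpha_i\,(x_i - e_{\ell_i})\,(e_{\ell_i}^T x_j).
\]
Since $i < j$ we have $\ell_i > \ell$, and $x_j$ is supported on its first $\ell$ entries, so the $\ell_i$-th entry of $x_j$ vanishes; hence $R_i x_j = x_j$. Applying the factors $R_{j-1}, R_{j-2}, \dots, R_1$ one after another to $x_j$ (the product being empty, hence trivially the identity, when $j = 1$) yields $R_1 \cdots R_{j-1} x_j = x_j$.

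The second identity is dual. For $i > j$ I would check that $y_j^T R_i = y_j^T$:
\[
 y_j^T R_i = y_j^T + \bigl(y_j^T (x_i - e_{\ell_i})\bigr)\, y_i^T = y_j^T + \alpha_j\,\bigl(e_\ell^T x_i - e_\ell^T e_{\ell_i}\bigr)\, y_i^T.
\]
Here $i > j$ gives $\ell_i < \ell$, so $e_\ell^T x_i = 0$ because $x_i$ is supported on its first $\ell_i$ entries, while $e_\ell^T e_{\ell_i} = 0$ since $\ell \neq \ell_i$; hence $y_j^T R_i = y_j^T$. Multiplying $y_j^T$ on the right successively by $R_{j+1}, R_{j+2}, \dots, R_k$ (the product being empty, hence trivially the identity, when $j = k$) gives $y_j^T R_{j+1} \cdots R_k = y_j^T$.

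I do not anticipate a genuine obstacle: the argument is a one-line computation per factor plus an associativity/telescoping remark. The only care needed is bookkeeping the relation $\ell_i = n - i + 1$ so that the monotonicity is applied in the correct direction, and invoking the support property of $x_i$ precisely where it is used; the empty-product cases $j = 1$ and $j = k$ should be stated explicitly but are immediate.
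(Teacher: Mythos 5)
Your proof is correct and follows essentially the same route as the paper's: the key observation in both is that $y_i^T x_j = \alpha_i e_{n-i+1}^T x_j = 0$ for $i<j$ by the support structure of $x_j$, with the dual observation for the second identity (which the paper leaves as "similar arguments") worked out explicitly by you. Your version merely fills in the telescoping and the edge cases $j=1$, $j=k$ more carefully than the paper does.
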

\begin{proof}
Whenever $i < j$, we have $y_{i}^T x_j = \alpha_{i} e_{n-i+1}^T x_j =0$,
because of the zero structure of $x_j$. The relation $R_1 \cdots R_{j-1} x_j = x_j$
follows from \eqref{R:gf}. Similar arguments prove the second relation.

\end{proof}






\begin{theorem}
  \label{thm:gaussbackwarderror}
  Let $\hat S$ be the result of the floating point computation of
  $\tilde S = U^* S V$, where a Gaussian factorization of $S$ was used. Then we have that
  $$\hat S = U^* (S + \delta S) V,\;\; \mbox{with}\;\; \norm{\delta S} \lesssim \norm{S}^2 u.$$ 
\end{theorem}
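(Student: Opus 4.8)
The plan is to reduce the global backward error on $S$ to the per-factor errors supplied by Theorem~\ref{lem:Rperturbation}. Writing $S = Q R_1\cdots R_k$ and telescoping the accumulated pass-through unitaries over the factors, one obtains
\[
  \hat S = U^*(Q+\delta Q)(R_1+\delta R_1)\cdots(R_k+\delta R_k)\,V,
\]
where $Q$ is touched only by turnovers so $\|\delta Q\| \lesssim u$, and where each $R_j$ is exactly in the setting of Theorem~\ref{lem:Rperturbation}, so
\[
  \delta R_j \doteq \delta R_{u,j} + \delta w_{1,j}\,y_j^T - \delta\rho_{r,j}\,x_j y_j^T + x_j\,\delta w_{2,j}^T,
\]
with $\|\delta R_{u,j}\|, \|\delta w_{1,j}\| \lesssim u$ and $\|\delta w_{2,j}\|, |\delta\rho_{r,j}| \lesssim \|y_j\|\,u$. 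Expanding the product and discarding the second-order terms (negligible under $\doteq$) yields
\[
  \delta S \doteq \delta Q\,R_1\cdots R_k + \sum_{j=1}^{k} Q\,R_1\cdots R_{j-1}\,\delta R_j\,R_{j+1}\cdots R_k .
\]

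The heart of the argument is to keep the flanking products small. Because the spike of $R_i$ lies in column $n-i+1$ and $x_i$ is supported in its first $n-i+1$ entries, every cross term vanishes, and a short computation (extending the reasoning behind Lemma~\ref{lem:rxj}) shows $R_1\cdots R_m = I + \sum_{i=1}^{m}(x_i-e_{n-i+1})y_i^T$ and $R_m\cdots R_k = I + \sum_{i=m}^{k}(x_i-e_{n-i+1})y_i^T$, with the rank-one terms sitting in pairwise distinct columns. Hence each partial product agrees with $Q^*S = I + \sum_{i=1}^k(x_i-e_{n-i+1})y_i^T$ on a subset of columns and is the identity on the rest; counting columns gives $\|R_1\cdots R_{j-1}\|^2, \|R_{j+1}\cdots R_k\|^2 \le \|S\|^2 + k$, and since the $I_k$ subdiagonal blocks of the companion matrix force $\|S\| \gtrsim \sqrt{n}$ these are both $\lesssim \|S\|$; the same count gives $\|R_j\| \lesssim \|S\|$, hence $\|y_j\| = |\alpha_j| \lesssim \|S\|$ via $\|\underline{y}_j\| \le \|\underline{R}_j\| \le 1 + \|R_j\|$. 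Finally, Lemma~\ref{lem:rxj} provides $R_1\cdots R_{j-1}x_j = x_j$ and $y_j^T R_{j+1}\cdots R_k = y_j^T$, so the pieces of $\delta R_j$ carrying $x_j$ on the left or $y_j^T$ on the right shed their neighbouring factors entirely.

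With these facts the four contributions are routine to bound. The $\delta Q$ term is $\lesssim \|\delta Q\|\,\|R_1\cdots R_k\| = \|\delta Q\|\,\|S\| \lesssim \|S\|\,u$. The unitary-part term is $\lesssim \|R_1\cdots R_{j-1}\|\,\|\delta R_{u,j}\|\,\|R_{j+1}\cdots R_k\| \lesssim \|S\|^2 u$. Using the absorption identities, the three rank-one pieces reduce to $Q R_1\cdots R_{j-1}\,\delta w_{1,j}\,y_j^T$, $-\delta\rho_{r,j}\,Q x_j\,y_j^T$, and $Q x_j\,(\delta w_{2,j}^T R_{j+1}\cdots R_k)$, each of norm $\lesssim \|S\|^2 u$ once $\|R_1\cdots R_{j-1}\|, \|R_{j+1}\cdots R_k\| \lesssim \|S\|$ and $\|y_j\| \lesssim \|S\|$ are inserted. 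Summing over the $k$ indices only contributes a factor $k$, absorbed into $\lesssim$, so $\|\delta S\| \lesssim \|S\|^2 u$. Taking $Q = I_n$ gives the identical bound for $T$, and this argument may be reused verbatim when the full pencil is analysed.

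I expect the main obstacle to be the a priori threat of a multiplicative blow-up: a crude estimate of $Q R_1\cdots R_{j-1}\,\delta R_j\,R_{j+1}\cdots R_k$ loses a factor $\|S\|$ for every flanking $R_i$, which would leave a useless bound of order $\|S\|^k u$. The remedy is purely structural: the disjoint-column form of the spikes makes each partial product unitary-plus-low-rank of norm only $\mathcal{O}(\|S\|)$, and Lemma~\ref{lem:rxj} strips the flanking factors off the dominant $x_j y_j^T$ piece, so at most two powers of $\|S\|$ ever survive, which is exactly the claimed exponent. A secondary point that needs care is the normalization bookkeeping for $\underline{x}_j$ and $\underline{y}_j$, ensuring that the factor $\|y_j\|$ appearing in the bounds $|\delta\rho_{r,j}|, \|\delta w_{2,j}\| \lesssim \|y_j\|\,u$ of Theorem~\ref{lem:Rperturbation} is the correct one, together with the upgrade from the cheap bound $\|y_j\| \lesssim \|R_j\|$ to the global bound $\|y_j\| \lesssim \|S\|$.
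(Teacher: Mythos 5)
Your proposal is correct and follows essentially the same route as the paper: the same telescoping decomposition into $\tilde Q$ and $\tilde R_j$, the same first-order expansion, the same use of Theorem~\ref{lem:Rperturbation} for each factor, and the same absorption of the flanking products via Lemma~\ref{lem:rxj}. The only difference is that you justify $\norm{R_1\cdots R_{j-1}}\lesssim\norm{S}$ in detail via the disjoint-column structure of the spikes, whereas the paper simply asserts this bound.
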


\begin{proof}
  The upper triangular matrix $\tilde S$ is expressed as
  the product of upper triangular matrices $\tilde R_j=U^*_{j-1} R_j U_j$, for
$j=1,\ldots,k$ (set $U_k=V$), with the matrices $R_j$ as in \eqref{R:gf}:
  \[
    \tilde S = U^* S V =
    \underbrace{U^* Q U_0}_{\tilde{Q}=I_n}\,
    \underbrace{U_0^* R_1 U_1}_{\tilde R_1}\,
    \underbrace{U_1^* R_2 U_2}_{\tilde R_2}\,
    \cdots\,
    \underbrace{U_{k-1}^* R_k V}_{\tilde R_k}. 
  \]
  
Applying Theorem~\ref{lem:Rperturbation} to each of the upper triangular factors
gives $\hat{R}_j= U^*_{j-1} (R_j+\delta R_j) U_j$. 
For the unitary part we have 
$U^*(Q + \delta Q)U_0$, with $\|\delta Q\| \lesssim u$.
Combining the relations for the upper triangular and unitary part yields, up to first order terms: 
\begin{equation}
\label{eq:cerror}
  U \hat S V^* - S \doteq \delta Q\, R_1 \cdots R_k + \sum_{j = 1}^k Q\, R_1 \cdots R_{j-1}\, \delta R_j\,
  R_{j+1} \cdots R_k. 
\end{equation}

Relying on the explicit form of the perturbation from Theorem~\ref{lem:Rperturbation} 
combined with Lemma~\ref{lem:rxj} 
allows to rewrite the product of the upper triangular factors as follows
  \begin{eqnarray*}
    R_1 \cdots R_{j-1} \,\delta R_j\, R_{j+1} \cdots R_k &\doteq
    R_1 \cdots R_{j-1}\, \delta R_{j,u} \,R_{j+1} \cdots R_k -  \delta \rho_{j,r}\, x_j y_j^T \\
    &+ R_1 \cdots R_{j-1} \,\delta w_{j,1}\, y_j^T
    + x_j\, \delta w_{j,2}^T\, R_{j+1} \cdots R_k. 
  \end{eqnarray*}
  Noting that $\norm{R_{1} \cdots R_{j-1}} \leq \norm{S}$, $\norm{R_{j+1} \cdots R_{k}} \leq \norm{S}$, 
and $\norm{\underline{y_j}} \lesssim \norm{S}$,  we can bound each of the above
  terms by a modest multiple of $\norm{S}^2 u$.
Plugging this bound into \eqref{eq:cerror} and taking into
account that $\|\delta Q\, R_1 \cdots R_k\| \lesssim \|S\| u$ gives the result.
\end{proof}
As a consequence we immediately know also that the backward error on $T$ is bounded by a modest multiple of $\|T\|^2 u$.

\subsection{Frobenius factorization}
\label{be:frobenius}
The Frobenius factorization is slightly more difficult to analyze.
At the start, we factor $S$ as
\[
  S = \mathcal Q R_1 \cdots \mathcal Q R_k = S_1 \cdots S_k.
\]
We are able to prove roughly the same results, that means an error depending on $\|S\|^2$
only, but we will see that the constant could be much larger than in the Gaussian case,
due to the intermediate factor matrices $Q$. In the Gaussian case there is only a single
factor in front the unitary-plus-spike matrices; here, however, the interlacing unitary
factors will spread out the errors more leading to a higher constant.

 According to Section~\ref{sec:mmp} each $S_j$ is of the
form \begin{equation}
\label{eq:ff}
S_j = \mathcal Q (I_n + x_j y_j^T),\;\; \mbox{ with }\;\;
  y_j = \alpha_j e_{n},
\end{equation}
and $\CS{Q} x_j$ can only have its first $n-j+1$ elements different from zero.

\begin{lemma}
  \label{lem:xjRcomp}
  For $j = 1, \ldots, k$ and $S_j$ the factors \eqref{eq:ff} of the Frobenius
  factorization we have
  $S_{1} \cdots S_{j-1} \CS{Q} x_j = \CS{Q}^{j} x_j$ and
  $y_j^T S_{j+1} \cdots S_{k} = y_j^T \mathcal Q^{k-j}$.
\end{lemma}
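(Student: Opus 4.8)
The plan is to follow the same template as the proof of Lemma~\ref{lem:rxj}, the only genuine complication being the interlaced cyclic shifts $\CS{Q}$ appearing in the factors \eqref{eq:ff}: each application of $S_{i}=\CS{Q}(I_n+x_iy_i^T)$ contributes a $\CS{Q}$ that moves the support of the vector being multiplied, so the bookkeeping must track supports under powers of $\CS{Q}$. Throughout I would use that $\CS{Q}$ from \eqref{matrixQ} is the cyclic permutation $e_i\mapsto e_{i+1}$ (indices mod $n$), hence $\CS{Q}^m$ shifts the support of a vector up by $m$ positions, and dually $e_i^T\CS{Q}^m=e_{i-m}^T$ (indices mod $n$). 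The single structural hypothesis I would lean on is the one recorded just after \eqref{eq:ff}: $\CS{Q}x_j$ has support contained in $\{1,\dots,n-j+1\}$; equivalently $x_j=\CS{Q}^{-1}(\CS{Q}x_j)$ has support contained in $\{n\}\cup\{1,\dots,n-j\}$.

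For the first identity I would prove, by induction on $i$ running down from $j-1$ to $1$, that
\[
  S_i S_{i+1}\cdots S_{j-1}\,\CS{Q}x_j = \CS{Q}^{\,j-i+1}x_j .
\]
Expanding $S_i=\CS{Q}(I_n+x_iy_i^T)$ with $y_i=\alpha_i e_n$, one step of the recursion yields $\CS{Q}^{\,j-i+1}x_j+\alpha_i(\CS{Q}x_i)\,(e_n^T\CS{Q}^{\,j-i}x_j)$, so the whole matter reduces to showing the scalar $e_n^T\CS{Q}^{\,j-i}x_j$ is zero. Writing $\CS{Q}^{\,j-i}x_j=\CS{Q}^{\,j-i-1}(\CS{Q}x_j)$ and using that $\CS{Q}x_j$ is supported in $\{1,\dots,n-j+1\}$ while $j-i-1\le j-2$, no index reaches $n$ during these shifts, so the support of $\CS{Q}^{\,j-i}x_j$ is $\{j-i,\dots,n-i\}\subseteq\{1,\dots,n-1\}$ and its $n$th entry vanishes. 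The base case $i=j-1$ (and the degenerate case $j=1$, an empty product) is immediate, and taking $i=1$ gives the stated relation.

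For the second identity I would similarly prove, by induction on $m=0,1,\dots,k-j$, that
\[
  y_j^T S_{j+1}\cdots S_{j+m} = y_j^T \CS{Q}^{\,m} = \alpha_j e_{n-m}^T ,
\]
the last equality being $e_n^T\CS{Q}^m=e_{n-m}^T$, valid because $m\le k-1<n$ so nothing wraps. In the inductive step, $\alpha_j e_{n-m+1}^T S_{j+m}=\alpha_j e_{n-m}^T(I_n+x_{j+m}y_{j+m}^T)=\alpha_j e_{n-m}^T+\alpha_j(e_{n-m}^T x_{j+m})\,y_{j+m}^T$, and again everything reduces to $e_{n-m}^Tx_{j+m}=0$. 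Since $\CS{Q}x_{j+m}$ is supported in $\{1,\dots,n-j-m+1\}$, the vector $x_{j+m}$ is supported in $\{n\}\cup\{1,\dots,n-j-m\}$, and the index $n-m$ lies outside this set precisely because $j\ge 1$ and $m\ge 1$. Setting $m=k-j$ finishes the argument.

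The only delicate point—the main obstacle—is the index bookkeeping for the cyclic shift: at each stage one must verify that the relevant support never wraps past position $n$ in a way that would place a nonzero entry where a zero is needed. The constraints $m\le j-1$ (first identity) and $1\le m$ together with $j\ge 1$ (second identity) are exactly what guarantee this, and they hold throughout the ranges of $i$ and $m$ that occur.
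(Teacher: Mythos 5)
Your proof is correct and follows essentially the same route as the paper: both arguments rest on the observation that $\CS{Q}x_j$ has $j-1$ trailing zeros, so the relevant inner products $e_n^T\CS{Q}^{j-i}x_j$ and $e_{n-m}^Tx_{j+m}$ vanish because the support never reaches the required index under the cyclic shifts. You simply spell out the induction and the second relation (which the paper dismisses with ``obtained similarly'') in more detail.
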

\begin{proof}
We know that each $\CS{Q} x_j$ has $j-1$ trailing zeros (for $1\leq j \leq k$), as a consequence, recall the structure
\eqref{matrixQ} of $\CS{Q}$, we have that $\CS{Q}^{j-\ell}x_j=\CS{Q}^{j-\ell-1} \CS{Q} x_j$ has $\ell$ trailing zeros.
As a consequence we have that $y_\ell^T \CS{Q}^{j-\ell} x_j =\alpha_\ell e_{n}^T
\CS{Q}^{j-\ell} x_j=0$ as long as $\ell <j $.
 Using \eqref{eq:ff}
proves the first relation, the second relation is obtained similarly.
\end{proof}

\begin{theorem}
\label{theo:be:frobenius}
  Let $\hat S$ be the result of the floating point computation of
  $\tilde S = U S V^*$, where a Frobenius factorization of $S$ was used. Then we have that
  $\hat S = U (S + \delta S) V^*$ with
  $\norm{\delta S} \lesssim  \norm{S}^2 u$.   
\end{theorem}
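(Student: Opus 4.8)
The plan is to follow the Gaussian argument of Theorem~\ref{thm:gaussbackwarderror} almost verbatim, with Lemma~\ref{lem:rxj} replaced by Lemma~\ref{lem:xjRcomp} and with extra care for the interlaced unitary factors $\CS{Q}$. First I would describe how the backward error splits over the factors of $S = S_1 \cdots S_k$. The algorithm produces $\tilde S = U S V^*$ by an equivalence that modifies each $S_j = \CS{Q}R_j$: passing a core transformation through $\CS{Q}$ consists of turnovers and incurs only an $\mathcal{O}(u)$ error $\delta\CS{Q}_j$ on the unitary part, while passing it through the compactly stored identity-plus-spike factor $R_j$ is controlled by Theorem~\ref{lem:Rperturbation}. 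Hence, to first order, $\delta S_j \doteq \delta\CS{Q}_j\, R_j + \CS{Q}\,\delta R_j$ with $\norm{\delta\CS{Q}_j}\lesssim u$ and
\[
  \delta R_j \doteq \delta R_{j,u} + \delta w_{j,1}\,y_j^T - \delta\rho_{j,r}\,x_j y_j^T + x_j\,\delta w_{j,2}^T,
\]
where $\norm{\delta R_{j,u}},\norm{\delta w_{j,1}} \lesssim u$ and $\norm{\delta w_{j,2}},\absval{\delta\rho_{j,r}} \lesssim \norm{\underline{y_j}}\,u$. Collecting the contributions of the $k$ factors, the errors combine to first order as
\[
  U\hat S V^* - S \doteq \sum_{j=1}^k S_1\cdots S_{j-1}\,\delta\CS{Q}_j\,R_j\,S_{j+1}\cdots S_k + \sum_{j=1}^k S_1\cdots S_{j-1}\,\CS{Q}\,\delta R_j\,S_{j+1}\cdots S_k.
\]

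Next I would collect the a priori bounds. As in the Gaussian case $\norm{x_j}\le\norm{\underline{x_j}}=1$ and $\norm{\underline{y_j}}=\absval{\alpha_j}\lesssim\norm{S}$, since the spike entries of $S_j$ are, up to normalisation, entries of the block-coefficient matrix $M$ and $\norm{M}\le\norm{S}$. The crucial new ingredient is a bound on the intermediate products: each $S_i = \CS{Q} + (\CS{Q}x_i)\,\alpha_i e_n^T$ is the cyclic shift plus a rank-one term whose column vector $\CS{Q}x_i$ has $i-1$ trailing zeros, so in $S_1\cdots S_{j-1}$ the low-rank contributions are carried by the intervening shifts into the distinct columns $n,\,n-1,\,\dots,\,n-j+2$; therefore $S_1\cdots S_{j-1} = \CS{Q}^{j-1} + (\text{rank-one terms in distinct columns})$, and summing squared column norms gives $\norm{S_1\cdots S_{j-1}}\lesssim\norm{S}$, and similarly $\norm{S_{j+1}\cdots S_k}\lesssim\norm{S}$ and $\norm{S_j\cdots S_k}\lesssim\norm{S}$. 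I would phrase this as a short auxiliary lemma.

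With these bounds the termwise estimates are routine. Each $\delta\CS{Q}_j$ term, rewritten using $R_j S_{j+1}\cdots S_k = \CS{Q}^*(S_j\cdots S_k)$ as $(S_1\cdots S_{j-1})\,\delta\CS{Q}_j\,\CS{Q}^*(S_j\cdots S_k)$, is $\lesssim\norm{S}\cdot u\cdot\norm{S}=\norm{S}^2u$. For the $\CS{Q}\,\delta R_j$ terms I would substitute the four-term expansion of $\delta R_j$: the $\delta R_{j,u}$ piece is bounded directly by $\norm{S_1\cdots S_{j-1}}\,u\,\norm{S_{j+1}\cdots S_k}\lesssim\norm{S}^2u$, and in the three remaining pieces Lemma~\ref{lem:xjRcomp} collapses $S_1\cdots S_{j-1}\,\CS{Q}\,x_j = \CS{Q}^j x_j$ (norm $\le 1$) and $y_j^T S_{j+1}\cdots S_k = y_j^T\CS{Q}^{k-j}$ (norm $=\norm{\underline{y_j}}\lesssim\norm{S}$), leaving in each case at most one surviving factor of norm $\lesssim\norm{S}$ paired with one of the bounds $\absval{\delta\rho_{j,r}},\norm{\delta w_{j,1}},\norm{\delta w_{j,2}}\lesssim\norm{S}u$; so all three are $\lesssim\norm{S}^2u$. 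Summing over $j=1,\dots,k$ contributes only a polynomial-in-$k$ factor, absorbed by $\lesssim$, and yields $\norm{\delta S}\lesssim\norm{S}^2u$.

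The hard part is the intermediate-product bound $\norm{S_1\cdots S_{j-1}}\lesssim\norm{S}$: the naive estimate $\norm{S_1\cdots S_{j-1}}\le\prod_i\norm{S_i}$ is useless because the companion-type factors $S_i$ are themselves not small, and only the cyclic-shift structure, which forces the rank-one parts into disjoint columns, keeps the partial products of size $\mathcal{O}(\norm{S})$. This is also why the constant is worse than in the Gaussian case, where a single unitary $Q$ sits in front of all the spike factors: here the $k$ interlaced copies of $\CS{Q}$ each contribute their own $\mathcal{O}(u)$ backward error and the surviving $\norm{S}$-factors are spread over more products. Apart from this, the argument is bookkeeping with Lemma~\ref{lem:xjRcomp} and Theorem~\ref{lem:Rperturbation}, mirroring the Gaussian proof step for step.
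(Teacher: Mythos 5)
Your proposal is correct and follows essentially the same route as the paper: the same first-order splitting of the error over the interlaced factors $\tilde Q_j$ and $\tilde R_j$, the same four-term expansion of $\delta R_j$ from Theorem~\ref{lem:Rperturbation}, and the same collapsing of the outer products via Lemma~\ref{lem:xjRcomp}. The only difference is that you state and justify the intermediate-product bound $\norm{S_1\cdots S_{j-1}}\lesssim\norm{S}$ explicitly (via the disjoint-column structure of the rank-one parts), a step the paper uses implicitly; this is a welcome clarification rather than a different argument.
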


\begin{proof}
  The upper triangular matrix $\tilde S$ can be expressed as
  the product of upper triangular matrices $\tilde R_j$ and unitary matrices $\tilde{Q}_i=I_n$ as follows:
  \[
    \tilde S = U^* S V =
    \underbrace{U^* \mathcal Q X_0}_{\tilde Q_1}\,
    \underbrace{X_0^* R_1 U_1}_{\tilde R_1}\, \underbrace{U_1^* \mathcal Q X_1}_{\tilde Q_2}\,
    \underbrace{X_1^* R_2 U_2}_{\tilde R_2}\,
    \cdots
    \underbrace{U_{k-1} \mathcal Q X_{k-1}^*}_{\tilde Q_{k}}\,
    \underbrace{X_{k-1}^* R_k V}_{\tilde R_k}. 
  \]
  Using again Theorem~\ref{lem:Rperturbation} to bound the perturbation on the upper
triangular factors and assuming that 
in floating
  point we have computed $U_j^*(\mathcal Q + \delta \CS{Q}_j) X_j$, for $j=0,\ldots,k-1$
(set $U_0=U$), with $\|\delta
Q_j\|\lesssim u$,
  yields, up to first order terms: 
  \[
    U \hat S V^* - S \doteq
    \sum_{j = 1}^k S_1 \cdots S_{j-1}\, \delta \CS{Q}_j \,R_j \,S_{j+1} \cdots S_k +
    \sum_{j = 1}^k S_1 \cdots S_{j-1}\, \mathcal Q \,\delta R_j\,
    S_{j+1} \cdots S_k. 
\]
For the left part of the summand we get $\|S_1 \cdots S_{j-1}\, \delta \CS{Q}_j \,R_j
\,S_{j+1} \cdots S_k \| \lesssim \|S\|^2 u$. The right part 
can be analyzed similarly as before and by 
Theorem~\ref{lem:Rperturbation} and Lemma~\ref{lem:xjRcomp} 
we get
  \begin{eqnarray*}
    S_1 \cdots S_{j-1}\, \mathcal Q \, \delta R_j \, S_{j+1} \cdots S_k &\doteq
    S_1 \cdots S_{j-1}\, \mathcal Q \, \delta R_{j,u} S_{j+1} \cdots S_k -
    \delta \rho_{j,r}\, \CS{Q}^j x_j y_j^T \CS{Q}^{k-j} \\
    &+ S_1 \cdots S_{j-1} \mathcal Q\, \delta w_{j,1}\, y_j^T \CS{Q}^{k-j}
    + \CS{Q}^j x_j \,\delta w_{j,2}^T\, S_{j+1} \cdots S_k. 
  \end{eqnarray*}
  All of the terms above can be bounded by $\norm{S}^2 u$ and the result follows.
\end{proof}

\begin{remark}
Both factorizations yield a quadratic dependency on the norm of $S$ for
the backward error. A careful look at the proof reveals, however, that the bound in the
Gaussian factorization is smaller than the one of the Frobenius case. In the latter
case we have $k$ terms  $S_1 \cdots S_{j-1}\, \delta \CS{Q}_j \,R_j
\,S_{j+1} \cdots S_k$ contributing to the quadratic error in $\|S\|$, and these terms are not present
in the Gaussian factorization.
\end{remark}

The error bounds can be improved significantly by scaling the problem. It is
straightforward to apply a diagonal scaling from the right on the matrix polynomial such that the
entire block column in \eqref{eq:comp_pencil} has norm $1$. This scaling is used in the numerical experiments.

\subsection{Main backward error results}
\label{be:summary}
The generic perturbation results  can be summarized into three theorems.

\begin{theorem}[Backward error on the block companion pencil]
\label{be:th1}
Given a block companion pencil $S-\lambda T$, the Schur form computed via the
algorithm proposed in this paper is the exact Schur form of a perturbed pencil $(S+\delta
S)- \lambda (T+\delta T)$, where $\|\delta T\|\lesssim \|T\|^2 u$, $\|\delta S\|\lesssim
\|S\|^2 u$, and $u$ is the unit round-off.
\end{theorem}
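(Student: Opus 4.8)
The plan is to lift the single‑matrix backward error estimates already in hand — Theorem~\ref{thm:gaussbackwarderror} for a Gaussian‑factored $S$ and Theorem~\ref{theo:be:frobenius} for a Frobenius‑factored $S$ — to a statement about the whole pencil, the only genuinely new point being that one common pair of unitary transformations acts on $S$ and on $T$ at once.

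First I would record that the complete algorithm — the reduction to Hessenberg--triangular form of Section~\ref{sec:reduction}, the product eigenvalue iterations of Section~\ref{sec:pep}, and the deflations of Section~\ref{sec:deflation} — is nothing but a finite sequence of unitary equivalences of the pencil. Every sweep transforms the formal product $ST^{-1}$ by a unitary similarity $ST^{-1}\mapsto Z^{*}ST^{-1}Z$, and this is realized on the stored factors as $S\mapsto Z^{*}SW$, $T\mapsto Z^{*}TW$ for a suitable intermediate unitary $W$, so that $T^{-1}\mapsto W^{*}T^{-1}Z$ and the product transforms as claimed. Accumulating the left factors $Z$ and the right factors $W$ across the whole run — together with the equivalence accumulated in the Hessenberg--triangular reduction — produces unitary matrices $U$ and $V$, each a product of the core transformations that were used, such that in exact arithmetic the computed Schur pair equals $(U^{*}SV,\,U^{*}TV)$. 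In particular $S$ and $T$ are acted upon by the \emph{same} $U$ and $V$.

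Next I would invoke the backward stability of the elementary operations: turnovers and fusions are backward stable \cite{AuMaVaWa15}, and a sequence of pass‑throughs of core transformations through a compactly stored upper‑triangular unitary‑plus‑rank‑one factor obeys the structured estimate of Theorem~\ref{lem:Rperturbation}. Threading these through the bookkeeping already carried out in the proof of Theorem~\ref{thm:gaussbackwarderror} (Gaussian factorization of $S$) or Theorem~\ref{theo:be:frobenius} (Frobenius factorization of $S$) gives $\hat S=U^{*}(S+\delta S)V$ with $\|\delta S\|\lesssim\|S\|^{2}u$. Since $T=T_{1}\cdots T_{k}$ is precisely a Gaussian‑type product of upper‑triangular unitary‑plus‑spike factors — the case $Q=I_{n}$ of Theorem~\ref{thm:gaussbackwarderror}, as noted just after its proof — the same argument yields $\hat T=U^{*}(T+\delta T)V$ with $\|\delta T\|\lesssim\|T\|^{2}u$, for the \emph{same} $U$ and $V$. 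Because $\hat S$ and $\hat T$ are upper triangular and $U,V$ unitary, $(\hat S,\hat T)$ is by definition the generalized Schur form of $(S+\delta S,\,T+\delta T)$, which proves the theorem.

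I expect the main obstacle to be exactly the consistency assertion of the first two paragraphs: one must be certain that the rounding incurred while chasing a misfit through the $S$‑factors and, subsequently, through the $T$‑factors can be charged to the original $S$ and $T$ \emph{simultaneously}, with one shared $(U,V)$. This is guaranteed by the pencil‑equivalence picture above — a misfit entering the $S$‑block perturbs only the $S$‑factors (Theorem~\ref{lem:Rperturbation} applied there), and, after being itself slightly modified, enters the $T$‑block and perturbs only the $T$‑factors; the accumulated modification of the misfit is absorbed into the unitary transformations, never into the data. Everything else is the routine norm estimate already done in Theorems~\ref{thm:gaussbackwarderror} and~\ref{theo:be:frobenius}, and the deflation steps of Section~\ref{sec:deflation} contribute only exact or backward‑stable unitary operations and hence do not disturb the bounds.
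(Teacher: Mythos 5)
Your proposal is correct and follows essentially the same route as the paper, whose own proof of this theorem is the single sentence ``The proof is a combination of the results of this section'': you apply Theorem~\ref{thm:gaussbackwarderror} (or Theorem~\ref{theo:be:frobenius}) to $S$ and the Gaussian case with $Q=I_n$ to $T$, and observe that the QZ-type equivalence supplies one shared pair $(U,V)$. Your explicit discussion of why the same $(U,V)$ acts on both matrices is a detail the paper leaves implicit, but it is not a different argument.
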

The proof is a combination of the results of this section.

\begin{theorem}[Backward error on the block companion pencil in case of scaling]
  \label{thm:backward_error_scaling}
  Given a block companion pencil $S-\lambda T$ and run the algorithm proposed in this paper 
  on the scaled pencil $(\alpha^{-1} S) - \lambda (\alpha^{-1} T)$, where 
  $\alpha=\max ( \|T\|, \|S\|)$.  Then the
  computed Schur form is the exact Schur form of a perturbed pencil $(S+\delta S)- \lambda
  (T+\delta T)$, where $\|\delta T\|\lesssim \alpha\, u$, $\|\delta S\|\lesssim \alpha\, u$,
  and $u$ is the unit round-off.
\end{theorem}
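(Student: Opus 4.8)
The plan is to reduce the statement immediately to Theorem~\ref{be:th1}, applied to the \emph{scaled} pencil, and then to undo the scaling. Running the proposed algorithm on $(\alpha^{-1}S,\alpha^{-1}T)$ produces, by Theorem~\ref{be:th1}, unitary matrices $\hat U$ and $\hat V$ such that both $\hat U^*(\alpha^{-1}S+\delta S')\hat V$ and $\hat U^*(\alpha^{-1}T+\delta T')\hat V$ are upper triangular, with $\|\delta S'\|\lesssim\|\alpha^{-1}S\|^2 u$ and $\|\delta T'\|\lesssim\|\alpha^{-1}T\|^2 u$. Since $\alpha=\max(\|S\|,\|T\|)$ we have $\|\alpha^{-1}S\|\leq 1$ and $\|\alpha^{-1}T\|\leq 1$, so $\|\delta S'\|\lesssim u$ and $\|\delta T'\|\lesssim u$.

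The second step is to rescale. Multiplying the perturbed scaled pencil by $\alpha$ and using that multiplying an upper triangular matrix by a scalar leaves it upper triangular, the matrices $\hat U^*(S+\alpha\,\delta S')\hat V$ and $\hat U^*(T+\alpha\,\delta T')\hat V$ are upper triangular. Hence the computed, rescaled triangular pair is the exact generalized Schur form of $(S+\delta S)-\lambda(T+\delta T)$ with $\delta S=\alpha\,\delta S'$ and $\delta T=\alpha\,\delta T'$, giving $\|\delta S\|\lesssim\alpha u$ and $\|\delta T\|\lesssim\alpha u$, as claimed. That this pair is genuinely a Schur form of the \emph{original} problem follows from the elementary fact that scaling both members of a matrix pencil by the same nonzero scalar leaves the eigenvalues and the generalized Schur vectors unchanged.

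The only delicate point is the floating-point computation of $\alpha^{-1}$ itself and its multiplication into the coefficients. If $\alpha$ is rounded to a power of two this step is exact; otherwise it contributes an additional relative error of size $u$, so $\|\alpha^{-1}S\|\leq 1+cu$ for a modest constant $c$, and $(1+cu)^2u\lesssim u$ still holds to first order, leaving the conclusion intact. The relative preprocessing error already accounted for in Theorem~\ref{be:th1} is equally harmless: on the scaled data it is bounded by a multiple of $\|\alpha^{-1}P_i\|u\leq u$, and pulling back by $\alpha$ it becomes a perturbation of the $P_i$ of size $\lesssim\alpha u$. Thus no new error analysis is needed; the main obstacle is purely bookkeeping, namely verifying that the global factor $\alpha^{-1}$ propagates consistently through preprocessing, the Hessenberg--triangular reduction, and the iterative phase so that Theorem~\ref{be:th1} may be invoked verbatim on $(\alpha^{-1}S,\alpha^{-1}T)$.
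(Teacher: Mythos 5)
Your proposal is correct and follows essentially the same route as the paper: apply Theorem~\ref{be:th1} to the scaled pencil $(\alpha^{-1}S,\alpha^{-1}T)$, use $\|\alpha^{-1}S\|,\|\alpha^{-1}T\|\leq 1$ to bound the backward error by $u$, and then multiply by $\alpha$ to map the perturbation back onto the original pencil. The additional remarks on the floating-point evaluation of $\alpha^{-1}$ are a harmless elaboration the paper omits.
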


\begin{proof}
 Applying Theorem~\ref{be:th1} to
$(\alpha^{-1} S) -\lambda (\alpha^{-1} T)$, we get the following bounds on the backward errors 
$\norm{\delta (\alpha^{-1} S)} \lesssim \norm{\alpha^{-1}S}^{2}u \leq u$
and $\norm{\delta (\alpha^{-1} T)} \lesssim \norm{\alpha^{-1}T}^{2}u \leq u$
of the perturbed pencil $(\alpha^{-1} S + \delta(\alpha^{-1} S) )- \lambda ( \alpha^{-1} +
\delta(\alpha^{-1}T))$ whose Schur form we have actually computed.

Mapping this to the original pencil $(S+\delta S)- \lambda
  (T+\delta T) = (S+\alpha\, \delta(\alpha^{-1} S))- \lambda
  (T+\alpha\, \delta(\alpha^{-1} T))$
proves the theorem.
\end{proof}

For our final result we make the assumption that 
$$\sqrt{\| [M_0,\ldots,M_{d-1}]\|^2+\|[N_1,\ldots,N_d]\|^2}
\approx\sqrt{\sum_{i=0}^d \| P_i \|^2 }.$$ 

\begin{theorem}[Backward error on the matrix polynomial]
  \label{thm:backward_error_mpoly}
  Given a matrix polynomial $P(\lambda)=\sum_{i=0}^d P_i \lambda^i,$
  whose eigenvalues are computed via the algorithm in this paper,
  including a scaling of the order $ \sqrt{\sum_{i = 0}^d \norm{P_i}^2}$. Then we know that these eigenvalues are
  the exact eigenvalues of a nearby polynomial
  \[
      P(\lambda)+\delta P(\lambda) = \sum_{i=0}^d (P_i(\lambda)+\delta P_i(\lambda))
\lambda^i,\] where
\[\sqrt{\sum_{i=0}^d \| \delta P_i \|^2 } 
\lesssim \,u\,\sqrt{\sum_{i=0}^d \|  P_i \|^2 } .\]
\end{theorem}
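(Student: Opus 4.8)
The plan is to feed the pencil-level backward error of Theorem~\ref{thm:backward_error_scaling} into a structured perturbation argument that converts a small \emph{unstructured} perturbation of the block companion pencil \eqref{eq:gpencil} into a small perturbation of the matrix polynomial, and then to undo the scaling.

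First I would invoke Theorem~\ref{thm:backward_error_scaling} (equivalently Theorem~\ref{be:th1} after scaling by $\beta=\sqrt{\sum_{i=0}^{d}\norm{P_i}^2}$): the computed Schur form is the \emph{exact} Schur form of a perturbed pencil $(S+\delta S)-\lambda(T+\delta T)$, hence the computed eigenvalues are exactly its eigenvalues, with $\norm{\delta S},\norm{\delta T}\lesssim\alpha\,u$ and $\alpha=\max(\norm{S},\norm{T})$. Because the identity blocks in \eqref{eq:gpencil} contribute only $\norm{S}^2=\norm{[M_0,\ldots,M_{d-1}]}^2+(d-1)k$ and $\norm{T}^2=\norm{[N_1,\ldots,N_d]}^2+(d-1)k$, the standing assumption $\sqrt{\norm{[M_0,\ldots,M_{d-1}]}^2+\norm{[N_1,\ldots,N_d]}^2}\approx\beta$ together with the scaling by $\beta$ gives $\alpha\lesssim\beta$ (the $\sqrt{dk}$ term from the identities being a polynomial factor absorbed by $\lesssim$). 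After scaling we therefore have $\norm{\delta S},\norm{\delta T}\lesssim u$ on the scaled pencil, and it remains to push this back onto the polynomial.

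The key is the structured perturbation lemma: \emph{for $\varepsilon:=\norm{\delta S}+\norm{\delta T}$ small enough there are a degree-$d$ perturbation $\delta P(\lambda)$ with $\sqrt{\sum_i\norm{\delta P_i}^2}\lesssim\varepsilon$ and invertible matrices $W_1=I_n+O(\varepsilon)$, $W_2=I_n+O(\varepsilon)$ such that $W_1\bigl((S+\delta S)-\lambda(T+\delta T)\bigr)W_2$ is the block companion pencil \eqref{eq:gpencil} of $P+\delta P$.} I would prove this by applying the implicit function theorem to the map $\Phi(\delta P,W_1,W_2)=W_1S_{P+\delta P}W_2-\lambda\,W_1T_{P+\delta P}W_2$ near $(0,I_n,I_n)$. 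Its derivative sends $(\dot P,E_1,E_2)$ to the companion pencil generated by $\dot P$, which lives entirely in the last block column of both coefficients, plus $E_1(S-\lambda T)+(S-\lambda T)E_2$. Surjectivity then reduces to the claim that $(E_1,E_2)\mapsto\bigl(E_1S+SE_2,\;E_1T+TE_2\bigr)$ is onto on the first $d-1$ block columns; this is a finite-dimensional rank computation exploiting the structure of \eqref{eq:gpencil} — the block subdiagonal $I_k$'s of $S$ shift the columns of $E_1$, the block diagonal $I_k$'s of $T$ reproduce the columns of $E_1$, and the bottom block row of $E_2$ supplies the remaining freedom — and it can be carried out block-column by block-column by back substitution. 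The implicit function theorem then yields $\delta P$, $W_1$, $W_2$ with $\sqrt{\sum_i\norm{\delta P_i}^2}+\norm{W_1-I_n}+\norm{W_2-I_n}\lesssim\varepsilon$, the implicit constant being the norm of a bounded right inverse of the derivative, which after scaling is a polynomial in $d$ and $k$. The second-order terms incurred are $O(\varepsilon^2)$ and, after scaling, are dominated by the $O(\varepsilon)$ bound.

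Finally I would assemble: by the lemma $(S+\delta S)-\lambda(T+\delta T)$ is strictly equivalent to the block companion pencil of $P+\delta P$; strict equivalence preserves all eigenvalues with their multiplicities, and \eqref{eq:gpencil} is a strong linearization of the corresponding matrix polynomial \cite{q573,AuMaVaWa15b}. Hence the computed eigenvalues are exactly those of $P+\delta P$ on the scaled problem, and multiplying everything back by $\beta$ leaves eigenvalues unchanged while scaling $\delta P$ by $\beta$, yielding $\sqrt{\sum_i\norm{\delta P_i}^2}\lesssim u\,\beta=u\sqrt{\sum_{i=0}^d\norm{P_i}^2}$. The step I expect to be the main obstacle is precisely the surjectivity and well-conditionedness of $(E_1,E_2)\mapsto\bigl(E_1S+SE_2,\;E_1T+TE_2\bigr)$ on the non-trailing block columns — i.e.\ that every unstructured perturbation of the block companion pencil is removable by an $I_n+O(\varepsilon)$ strict equivalence at the cost of only an $O(\varepsilon)$ change in the polynomial data; this is where the scaling is essential, and one could alternatively cite a known structured backward-error result for companion linearizations in place of the rank computation.
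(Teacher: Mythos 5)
Your proposal follows the same top-level skeleton as the paper's proof — scale by $\beta=\sqrt{\sum_i\norm{P_i}^2}$, invoke Theorem~\ref{thm:backward_error_scaling} to get an $O(u)$ unstructured backward error on the scaled companion pencil, convert that into a structured perturbation of the polynomial coefficients, and unscale — but it diverges on the one step that carries all the mathematical weight. The paper does not prove the pencil-to-polynomial conversion at all: it simply observes that the scaled pencil and the scaled coefficients both have norm about $1$ and then cites Edelman--Murakami \cite{edelman1995polynomial} and Dopico, Lawrence, P\'erez, and Van Dooren \cite{DoLaPeVD16} for the statement that a small perturbation of a normalized companion-type linearization corresponds to a comparably small perturbation of the polynomial. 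You instead sketch a self-contained proof of exactly that statement via the implicit function theorem, reducing it to surjectivity and bounded invertibility of $(E_1,E_2)\mapsto(E_1S+SE_2,\,E_1T+TE_2)$ on the non-trailing block columns. This is indeed the mechanism underlying the cited results (it is essentially the content of \cite{DoLaPeVD16} for block Kronecker linearizations), so your route is more self-contained and makes visible where the normalization enters, at the cost of a genuine computation you have not carried out: the block-column back-substitution establishing surjectivity, and — more importantly for the quantitative claim — a bound on the norm of a right inverse that is polynomial in $d$ and $k$ for the \emph{generalized} pencil \eqref{eq:gpencil} (not just the classical one in \eqref{eq:comp_pencil}), which is what turns ``removable by strict equivalence'' into the stated $\lesssim u\,\beta$ bound. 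As you yourself note, either complete that rank-and-conditioning computation or replace the lemma by the citation to \cite{DoLaPeVD16}; the latter is precisely what the paper does.
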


\begin{proof}
  The eigenvalues and eigenvectors of a matrix polynomial are
  invariant with respect to scaling, so we compute the eigenvalues
  of the scaled matrix polynomial
  \[
    Q(\x) = \alpha^{-1} P(\x),\; \mbox{ where } \;
    \alpha \approx \sqrt{\sum_{i = 0}^d \norm{P_i}^2}.
  \]
  According to Theorem~\ref{thm:backward_error_scaling} this provides
  the Schur form of the block companion pencil of $Q(\x)$ computed
  with a backward error of the order of the machine precision
  $u$.  Both the block companion pencil and the
  gathered coefficients of $Q(\x)$ have norm about $1$. Relying on 
the work of Edelman and Murakami \cite{edelman1995polynomial},
  or Dopico, Lawrence, P\'erez, and Van Dooren \cite{DoLaPeVD16}
this implies that we
  have computed the exact eigenvalues of a nearby polynomial
  $Q(\x) + \delta Q(\x)$, with
  $\sqrt{ \sum_{i = 0}^d \norm{\delta Q_i}^2} \lesssim
  u$.
  Therefore, we have computed the exact eigenvalues of
  $P(\x) + \delta P(\x)$, with $\delta P(\x) = \alpha\, \delta Q(\x)$,
  and $\delta P(\x)$ satisfies
  \[
    \sqrt{\sum_{i = 0}^d \norm{\delta P_i}^2} =
    \alpha \sqrt{\sum_{i = 0}^d \norm{\delta Q_i}^2} \lesssim
    \,u\,\sqrt{\sum_{i = 0}^d \norm{P_i}^2} . 
  \]
\end{proof}

We remark that the results in this paper provide normwise backward error results on all
coefficients simultaneously and coefficient-wise backward stability cannot be guaranteed
by these theorems.

So far we have not yet discussed the computation of the eigenvectors. The only operations
involved are computing the swapping core transformations, turnovers to move the swapping
core transformations through the upper triangular factors, and updating the matrix $PU$.
These are all backward stable and as a result retrieving a single eigenvector is backward
stable.
It must be said,
however, that we can only state that a single eigenpair is computed stably. Stability does not
necessarily hold for the entire eigendecomposition as the perturbation will be eigenvector
dependent.   In other words, we do not claim that
there is a single small perturbation on the matrix polynomial such that its exact eigenvectors
match all our computed eigenvectors.

\section{Numerical experiments}
\label{sec:numexp}
The algorithm is implemented in the software package {\tt eiscor}, which provides eigenvalue
algorithms based on core transformations. The software can be freely downloaded 
from Github by visiting {\tt https://github.com/eiscor/eiscor/}. 
We examine the computational complexity of computing only eigenvalues, the backward error
on the Schur form, complexity and stability of computing eigenpairs, and we conclude with
some examples from the NLEVP collection.

  \subsection{Computing eigenvalues: complexity analysis}
  
  We verify the asymptotic computational complexity of the method. We proved in
  Section~\ref{sec:reduction} that  
  the expected computational complexity is  $\mathcal O(d^2 k^3)$.
Two tests were executed.
  
  \begin{itemize}
  \item We verified the quadratic complexity in $d$ by fixing $k = 4$ and then computing
    eigenvalues of random matrix polynomial eigenvalue problems for different values of
    $d$.  We compared the timings with the QZ iteration implemented in LAPACK 3.6.0.
      
    \item We verified the cubic complexity in $k$ by fixing $d = 4$
      and running the algorithm for different values of $k$ ranging
      between $1$ and $1024$. 

  \end{itemize}
  
  In Figure~\ref{fig:quadratic}, fixing $k$,  we notice that the current implementation is faster
  than LAPACK at about $d = 40$.   In Figure~\ref{fig:cubic}
  we   plotted the complexity 
  as a function of the size
  of the coefficient matrices. The  plot shows 
  that the slope of the curve representing the reduction
   is well approximated by $3$, indicating a cubic dependency on $k$.  

  
  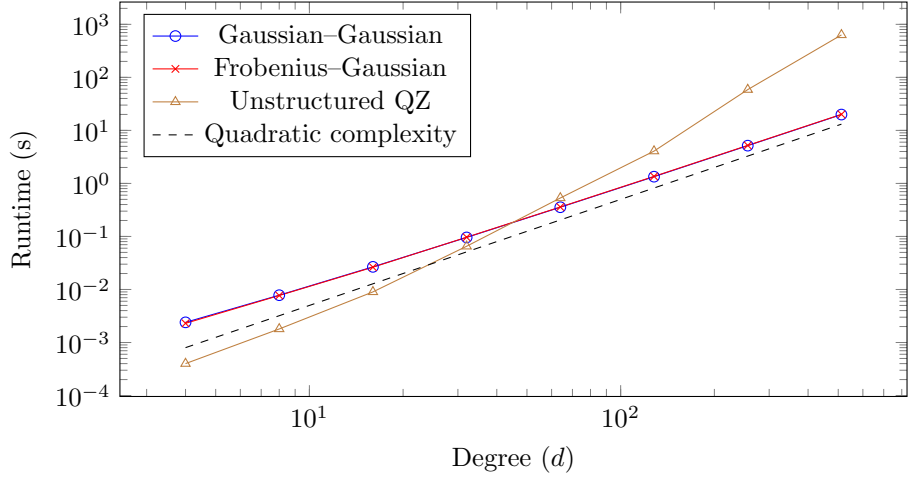
\begin{figure}
  \begin{center}
    \begin{tikzpicture}
    \begin{loglogaxis}[
      xlabel=Degree ($d$),ylabel=Runtime (s),
      width=.8\linewidth,
      height=.3\textheight,
      legend pos = north west]
      \addplot[mark=o,color=blue]
         table[x=Degree,y=Eiscorupperlower] {complexity.dat};
      \addplot[mark=x,color=red]
         table[x=Degree,y=Eiscorupper] {complexity.dat};    
      \addplot[mark=triangle,color=brown]
         table[x=Degree,y={Polyeig}] {complexity.dat};
      \addplot[dashed,domain=4:512] {5e-5 * x^2};
    \legend{Gaussian--Gaussian, Frobenius--Gaussian, Unstructured QZ, Quadratic complexity}; 
    \end{loglogaxis}
    \end{tikzpicture}
    
   \end{center}
  
   \caption{Test of the quadratic complexity in the degree $d$ of the matrix polynomial,
     $k=4$, and the tests were averaged over $10$ runs. The runtime is compared with the one
     of the unstructured QZ algorithm from LAPACK. The dashed line represents a quadratic
     dependence of the runtime on the degree and is added for reference. The runtimes are
     reported for (Frobenius,Gaussian)-factored and (Gaussian,Gaussian)-factored pencils,
     whose timings are almost indistinguishable.}
    \label{fig:quadratic}
  \end{figure}
  
  
  \begin{figure}
    \centering 
  \begin{tikzpicture}
    \begin{loglogaxis}[
      width=.8\linewidth,
      height=.3\textheight,
      xlabel=Size ($k$),ylabel=Runtime (s),
      legend pos = north west,
      ymax = 1e7]
    \addplot table[x=Size,y=fast] {complexity_size.dat};1
    \addplot[dashed,domain=2:512] {1e-4 * x^3}; 
    \legend{Reduction, $y = 10^{-4} x^3$}; 
    \end{loglogaxis}
  \end{tikzpicture}
    \caption{Test of the complexity in the size of the matrices $k$.  The examples all
      have  degree $d = 4$ and for each combination of $d$ and $k$ $10$ tests were run.
}
            \label{fig:cubic}

  \end{figure}
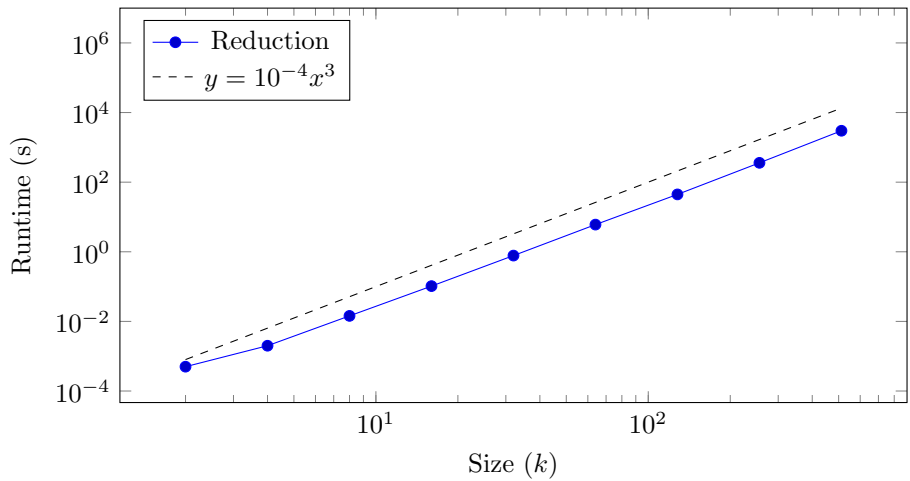

  \subsection{Backward stability of the Schur form}
  
  In Section~\ref{sec:backward} we provided bounds on the 
  backward error of the computed Schur form.
In order to validate these bounds we have measured the backward error on the computed
  upper triangular pencil $\hat{S} - \lambda \hat{T}$ by evaluating
  $\lVert U \hat{S} V^* - S \rVert_F$ and $\lVert U \hat{T} V^* - T \rVert_F$, 
  where $S - \lambda T$ is the companion pencil associated 
  to a matrix polynomial with random coefficients. We have run
  1000 experiments for $k=8$ and $d=10$.
  The results are reported in Figure~\ref{fig:backcloud}
for a (Frobenius,Gaussian)-factored pencil and in 
Figure~\ref{fig:backclouddavid} for a (Gaussian,Gaussian)-factored pencil.
We see that, even though both approaches exhibit a quadratic growth in $S$, the
(Gaussian,Gaussian)-factorization provides, for this test setting, the best backward error.
  Both plots also show that the bounds that we have found are asymptotically
  tight.
  \begin{figure}
    \centering
    \tikzsetnextfilename{errA}
    \begin{tikzpicture}
      \begin{loglogaxis}[
        width=.8\linewidth,
        height = .3\textheight,
        legend pos = north west,
        ylabel={$\norm{S-V\hat{S}W^{H}}_F$, $\norm{T-V\hat{T}W^{H}}_F$},%
        xlabel={$\norm{S}_F, \norm{T}_F$},
        ]
        \addplot[only marks, red, mark=*] table {errA.dat};
        \addplot[only marks, green, mark=triangle] table {errB.dat};
        \addplot[domain=2:1e7, dashed] {2.22e-15 * x^2};
        \legend{Backward error on $S$, Backward error on $T$, {$\approx \norm{\cdot}_F^2 \cdot u$}}
      \end{loglogaxis}
    \end{tikzpicture}
    
    \caption{Backward error on the computed Schur form for different
      values of $\norm{S}_F$ and $\norm{T}_F$. We took $k = 8$, $d= 10$ and ran 1000
      tests. For $S$ a Frobenius factorization and for $T$ a Gaussian factorization was used.
      The dashed lines represent a reference line for the quadratic complexity.
      }
      \label{fig:backcloud}
    \end{figure}
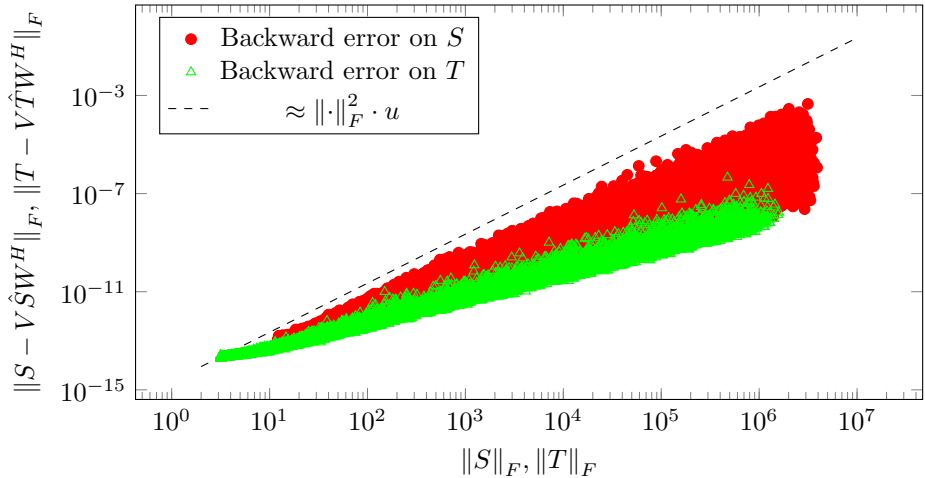

      \begin{figure}
        \centering       
        \tikzsetnextfilename{errAdavid}
    \begin{tikzpicture}
      \begin{loglogaxis}[
        width=.8\linewidth,
        height = .3\textheight,
        legend pos = north west,
        ylabel={$\norm{S-V\hat{S}W^{H}}_F$, $\norm{T-V\hat{T}W^{H}}_F$},%
        xlabel={$\norm{S}_F, \norm{T}_F$},
        ]
        \addplot[only marks, red, mark=*] table {errAdavid.dat};
        \addplot[only marks, green, mark=triangle] table {errBdavid.dat};
        \addplot[domain=2:1e7, dashed] {2.22e-15 * x^2};
        \legend{Backward error on $S$, Backward error on $T$, {$\approx \norm{\cdot}^2_F \cdot u$}}
      \end{loglogaxis}
    \end{tikzpicture}
    
    \caption{Backward error on the computed Schur form for different
      values of $\norm{S}_F$ and $\norm{T}_F$. We took $k = 8$, $d= 10$ and 1000
      runs. In this example
      the Gaussian factorization has been used  for both matrices $S$ and $T$.}
      \label{fig:backclouddavid}
  \end{figure}
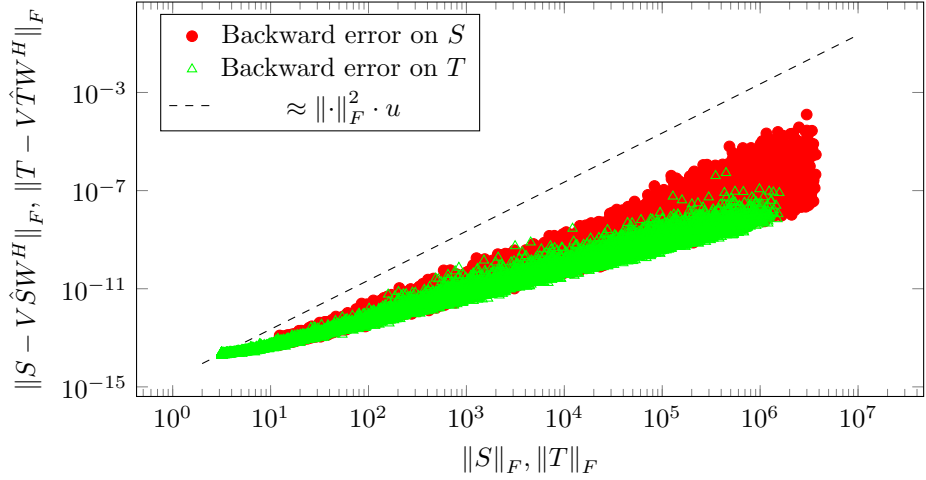

  \subsection{Computing eigenvectors: complexity and stability}

  Finally, we have computed the eigenvalues and the eigenvectors of
  random matrix polynomials of degree $d = 4$ and size $k = 8$.  We
  have generated the coefficients  drawing the entries from a
  normal distribution, and we have randomly scaled each coefficient in
  order to make them of unbalanced
  norms. More precisely, each coefficient is of the form
  $2^{\alpha}  M$, where the entries of $M$ are distributed as Gaussians
  with mean $0$ and variance $1$, and $\alpha$ is drawn from
  the uniform distribution on $[-15, 15]$. 

  According to Tisseur \cite{q654}, the absolute backward error
  on a computed eigenpair $(\lambda, v)$ can be evaluated as
  \begin{equation} \label{eq:mpoly_backward_formula}
    err(P, \lambda, v) = \norm{P(\lambda) v} \cdot \left( \sum_{j = 0}^d |\lambda|^j \right)^{-1}.
  \end{equation}

  In Figure~\ref{fig:backwardmpoly} we report the maximum of the
  absolute backward errors on the eigenpairs of $P(\x)$ computed with
  our algorithm; on the $x$ axis we have reported the norm of the
  coefficients of $P(\x)$, computed as the Frobenius norm of
  $[ P_0, \ldots, P_{d} ]$. The linear dependence of the backward
  error on the norm of the coefficients as predicted by Theorem~\ref{thm:backward_error_mpoly} is
  clearly visible.

  \begin{figure}
    \centering
    \begin{tikzpicture}
      \begin{loglogaxis}[
        legend pos = north west,
        width=.9\linewidth,
        height = .3\textheight,
        xlabel = $\norm{[ P_0, \ldots, P_d }_F$, ylabel = Backward error]
        \addplot[only marks, red] table[x index=0, y index=1] {mpoly_be.dat};
        \addplot[domain=5e-3:1e6, dashed] { 1e-14 * x };
        \legend{Backward error, $\mathcal O(\norm{P}) \cdot u$};
      \end{loglogaxis}
    \end{tikzpicture}
    \caption{Asolute backward error on the computed eigenpairs of random
      matrix polynomials $P(\x)$ of different norms, according
      to the formula~(\ref{eq:mpoly_backward_formula}).}
    \label{fig:backwardmpoly}
  \end{figure}
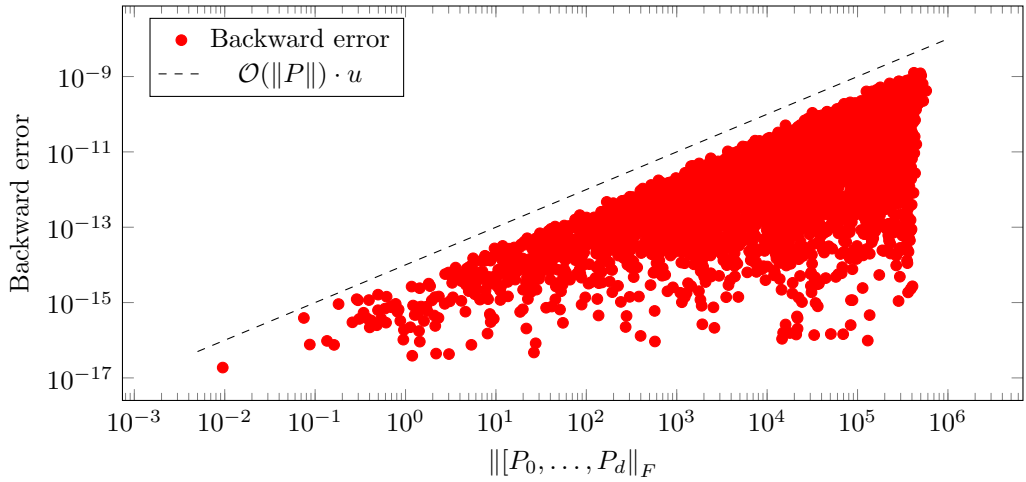

  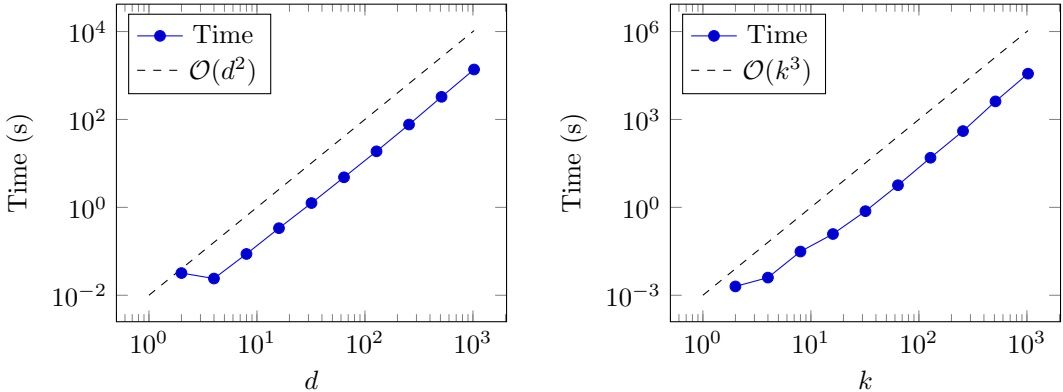
\begin{figure}
    \centering
    \begin{tikzpicture}
      \begin{loglogaxis}[
        legend pos = north west,
        width=.45\linewidth,
        xlabel = $d$, ylabel = Time (s)]
        \addplot table[x index=1, y index=2] {mpoly_time_d.dat};
        \addplot[domain=1:1024, dashed] { 1e-2 * x^2 };
        \legend{Time, $\mathcal O(d^2)$};
      \end{loglogaxis}
    \end{tikzpicture}~~~~~\begin{tikzpicture}
      \begin{loglogaxis}[
        legend pos = north west,
        width=.45\linewidth,
        xlabel = $k$, ylabel = Time (s)]
        \addplot table[x index=0, y index=2] {mpoly_time_k.dat};
        \addplot[domain=1:1024, dashed] { 1e-3 * x^3 };
        \legend{Time, $\mathcal O(k^3)$};
      \end{loglogaxis}
    \end{tikzpicture}
    \caption{Timings for the computation of all the eigenvectors and
      eigenvalues of a matrix polynomial as a function of the degree
      and of the size. The expected quadratic and cubic growth of the
      complexity are visible.}
    \label{fig:timings_mpoly}
  \end{figure}

  In Figure~\ref{fig:timings_mpoly} we have reported the timings for the
  computation of all the eigenvectors of al matrix polynomial $P(\x)$ for
  various degrees and sizes. The numerical results show that the behavior
  remains quadratic in $d$ and cubic in $k$ even when the additional work
  for the computation of the eigenvectors is required. 

  \subsection{Nonlinear eigenvalue problems: NLEVP}
  
  To verify the reliability of our approach we have tested our algorithm on 
  some problems from the NLEVP collection \cite{nlevp}. This archive contains 
realistic polynomial eigenvalue problems. 
  Most of them do have low degree, however, so we have tested our approach only on 
  problems of degree $4$, namely the {\tt orr\_sommerfeld}, the {\tt plasma\_drift}, 
 and the {\tt planar\_waveguide} problems. 
  
  To verify the backward stability, we have
  reported the backward error on the computed
  Schur form in Table~\ref{tab:nlevp-backward}. The pencil was scaled as  in Theorem~\ref{thm:backward_error_scaling}.

  In
  Figure~\ref{fig:backwardeig} we have reported the backward error on
  the single computed eigenpairs according to (\ref{eq:mpoly_backward_formula}).  The
  results clearly show that the proposed method performs as well as the classical QZ method.

  \begin{table}
    \centering
    \begin{tabular}[c]{c|cccc}
      & $\norm{\delta S}$ (eiscor) & $\norm{\delta S}$ (polyeig) & $\norm{\delta T}$ (eiscor) & $\norm{\delta T}$ (polyeig) \\ \hline
      {\tt planar\_waveguide} & 8.361\,e-14 & 8.336\,e-14 & 8.910\,e-14 & 7.214\,e-14 \\
      {\tt orr\_sommerfeld}   & 3.855\,e-14 & 5.523\,e-14 & 3.285\,e-14 & 4.475\,e-14 \\
      {\tt plasma\_drift}     & 6.177\,e-14 & 6.418\,e-14 & 5.705\,e-14 & 4.882\,e-14 \\
    \end{tabular}

    \vspace{12pt}
    
    \caption{Backward error on the Schur form of the properly scaled companion pencil
      for some NLEVP problems. The norms reported are the norms of the perturbations
      to $S$ and $T$.}
    \label{tab:nlevp-backward}
  \end{table}

  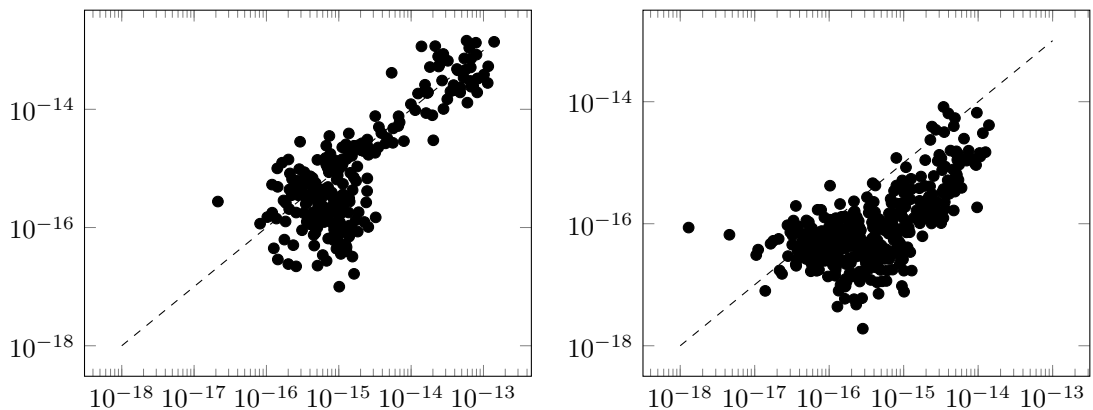
\begin{figure}[ht]
    \centering
    \begin{tikzpicture}
      \begin{loglogaxis}[width=.50\linewidth]
        \addplot[only marks] table[x index=0, y index=1]
        {nlevp_backward_pencil_orr_sommerfeld.dat};
        \addplot[domain=1e-18:1e-13,dashed] { x };
      \end{loglogaxis}
    \end{tikzpicture}~~~\begin{tikzpicture}
      \begin{loglogaxis}[width=.50\linewidth]
      \addplot[only marks] table[x index=0, y index=1]
      {nlevp_backward_pencil_plasma_drift.dat};
      \addplot[domain=1e-18:1e-13,dashed] { x };
      \end{loglogaxis}
    \end{tikzpicture}
    \caption{Backward errors on the computed eigenvalues for the
      {\tt orr\_sommerfeld} (on the left) and
      {\tt plasma\_drift} (on the right) NLEVP
      problems. Each point in the plot has the backward
      error obtained
      using the QZ algorithm on the (scaled) companion
      pencil from LAPACK as y coordinate
      and the algorithm presented in this paper as the x one. }
    \label{fig:backwardeig}
  \end{figure}

\section{Conclusions}
A fast, backward stable algorithm was proposed to compute the eigenvalues of matrix
polynomials. A factorization of the pencil matrices allowed us to design a product eigenvalue
problem operating on a structured factorization of the involved unitary-plus-rank-one
factors. Stability was proved and confirmed by the numerical experiments.

\section*{Acknowledgements}
The authors would like to express their gratitude to the referees, whose careful reading,
error detection, and questions led to a significantly improved version of this paper.

\bibliographystyle{siam}
\bibliography{longstrings,Paper}
\end{document}